\numberwithin{equation}{section}
\newtheorem{thm}{Theorem}[section]
\newtheorem{pro}[thm]{Proposition}
\newtheorem{lem}[thm]{Lemma}
\newcommand{\dt}{\partial_t}
\newcommand{\dx}{\partial_x}
\newcommand{\bbt}{\mathbb{T}}
\newcommand{\bbr}{\mathbb{R}}
\newcommand{\bbn}{\mathbb{N}}
\newcommand{\bbz}{\mathbb{Z}}
\newcommand{\lebl}{l}
\newcommand{\lebL}{L}
\newcommand{\lebH}{H}
\newcommand{\norm}[2]{\left\lVert #1\right\rVert_{#2}}
\newcommand{\abs}[1]{\left| #1\right|}
\newcommand{\ftxh}[1]{\hat{#1}}
\newcommand{\fttx}[1]{\mathcal{F}_{t,x}\left[ #1\right]}
\newcommand\bigast{\mathop{\mathchoice{\hbox{\huge$\ast$}}{\ast}{\scriptstyle\ast}{\scriptscriptstyle\ast}}}
\newcommand{\supp}[2][]{\text{supp}_{#1}\,#2}
\newcommand{\vari}{\mathfrak{I}}
\newcommand{\varz}{\mathfrak{V}}
\newcommand{\spx}{\textbf{X}}
\newcommand{\spec}{\mathfrak{E}}
\newcommand{\spfa}{\mathbf{F}}
\newcommand{\spfb}{\mathbf{F}}
\newcommand{\spfc}{\mathfrak{F}}
\newcommand{\spna}{\mathbf{N}}
\newcommand{\spnb}{\mathbf{N}}
\newcommand{\spnc}{\mathfrak{N}}
\begin{document}
	
\title{Well-posedness of the periodic dispersion-generalized Benjamin-Ono equation in the weakly dispersive regime}
\author{Niklas J\"ockel}
\address{Fakult\"at f\"ur Mathematik, Universit\"at Bielefeld, Postfach 100131, 33501 Bielefeld, Germany}
\email{njoeckel@math.uni-bielefeld.de}
\subjclass{35Q35}
\keywords{Dispersive equation, dispersion-generalized Benjamin-Ono equation}

\begin{abstract}
	We study the dispersion-generalized Benjamin-Ono equation in the periodic setting. This equation interpolates between the Benjamin-Ono equation ($\alpha=1$) and the viscous Burgers' equation ($\alpha=0$). We obtain local well-posedness in $H^s$ for $s>3/2-\alpha$ and $\alpha\in(0,1)$ by using the short-time Fourier restriction method.
\end{abstract}
\maketitle

\section{Introduction}

In this article we consider the dispersion-generalized Benjamin-Ono equation
\begin{equation}\label{eq_PDE}
	\begin{cases}
		\dt u
		+
		\dx D^\alpha_xu
		&=
		\dx (u^2)
		\\
		\hfill u(0)
		&=
		u_0
	\end{cases}
\end{equation}
posed on $\bbr\times\bbt$ with $\alpha\in(0,1)$. Here, the unknown $u$ is a real-valued function, the initial datum $u_0$ is in a Sobolev space $\lebH^s(\bbt)$ and $D_x^\alpha$ denotes the Fourier multiplier defined by $\mathcal{F}(D_x^\alpha u)(\xi)=\abs{\xi}^\alpha\mathcal{F}(u)(\xi)$.

Famous examples of equations of the form \eqref{eq_PDE} are the Korteweg-de Vries equation ($\alpha=2$) modelling unidirectional nonlinear dispersive waves as well as the Benjamin-Ono equation ($\alpha=1$) which models long internal waves in deep stratified fluids. For non-integer valued $\alpha$, equation \eqref{eq_PDE} can be seen as an interpolation between the two models and for $\alpha=1/2$ it is closely related to the Whitham equation for capillary waves, see below. In the case $\alpha=0$, equation \eqref{eq_PDE} reduces to the viscous Burgers' equation.\\

It is well-known that the Korteweg-de Vries equation as well as the Benjamin-Ono equation are integrable PDEs having infinitely many conserved quantities along their flow, see \cite{KV2019, KLV2023} and the references therein. For general $\alpha$ we still have the  conservation of the mean, the mass and the Hamiltonian. These quantities are given by
\begin{gather*}
	\int u(t,x) dx,
	\qquad
	\int \abs{u(t,x)}^2 dx,
	\qquad
	\int \frac{1}{2}\abs{D^{\alpha/2}_x u(t,x)}^2
	-
	\frac{1}{3} u^3(t,x) dx.
\end{gather*}
If posed on the real line, equation \eqref{eq_PDE} is invariant under the scaling
\begin{align*}
	u(t,x)
	\mapsto
	\lambda^{\alpha}u(\lambda^{1+\alpha}t,\lambda x)
\end{align*}
and hence the equation is scaling-critical in the homogeneous Sobolev space $\dot{\lebH}^{1/2-\alpha}(\bbr)$. In particular, the equation is $\lebL^2$-critical for $\alpha=1/2$ and it is energy-critical for $\alpha=1/3$.\\

The purpose of this article is to establish local well-posedness in $\lebH^s(\bbt)$, $s>3/2-\alpha$, for the Cauchy problem \eqref{eq_PDE} with periodic initial data and $\alpha\in(0,1)$. By local well-posedness, we mean the existence and uniqueness of solutions as well as their continuous dependence on the initial datum. Before stating our main theorem, let us briefly discuss some recent results concerning the well-posedness theory of equation \eqref{eq_PDE} for $\alpha\in[0,2]$ posed on the torus and on the real line.

\subsection{Known results and main theorem}
A classical result due to Kato \cite{Kat1975} yields the local well-posedness of the Cauchy problem \eqref{eq_PDE} in $\lebH^s(\bbt)$ and $\lebH^s(\bbr)$ for any $s>3/2$ and all $\alpha\in[0,2]$. This result is sharp for $\alpha=0$, but the proof does note take advantage of the dispersive structure of \eqref{eq_PDE} for $\alpha\neq 0$.

For the Korteweg-de Vries equation ($\alpha=2$), Bourgain \cite{Bou1993b} proved global well-posedness in $\lebL^2(\bbt)$ by the use of perturbative arguments. Local well-posedness in $\lebH^{-1/2}(\bbt)$ was obtained by Kenig, Ponce and Vega \cite{KPV1996}. This was upgraded to global well-posedness by Colliander, Keel, Staffilani, Takaoka and Tao \cite{CKS2003a}. Relying on the integrability of the equation, Kappeler and Topalov \cite{KT2006a} managed to show global well-posedness in $\lebH^{-1}(\bbt)$, which is sharp. In a recent work, Killip and Vi{\c{s}}an \cite{KV2019} proved global well-posedness in $\lebH^{-1}(\bbt)$ and in $\lebH^{-1}(\bbr)$.

Before we state some results in the range $\alpha<2$, let us recall that the behaviour of equation \eqref{eq_PDE} changes significantly in comparison to the case $\alpha=2$. Indeed, Molinet, Saut and Tzvetkov \cite{MST2001} observed that the dispersion is too weak to deal with the nonlinearity by perturbative means. More precisely, they showed that the frequency-interaction $u_{low}\dx u_{high}$ in the nonlinearity cannot be estimated appropriately in order to allow for a Picard iteration, see also \cite{KT2005, Her2008}.

In the well-posedness theory of the Benjamin-Ono equation ($\alpha=1$) a major breakthrough overcoming latter problem was obtained by Tao \cite{Tao2004}, who proved global well-posedness in $\lebH^1(\bbr)$. He applied a gauge transform which effectively cancels the worst behaving interaction in the nonlinearity. Following the same approach, Ionescu and Kenig \cite{IK2007} established global well-posedness in $\lebL^2(\bbr)$ and Molinet \cite{Mol2008} proved global well-posedness in $\lebL^2(\bbt)$. Relying on the integrability of the equation, Gérard, Kappeler and Topalov \cite{GKT2022} obtained global well-posedness in $\lebH^s(\bbt)$, $s>-1/2$, almost reaching the scaling critical regularity $-1/2$. Recently, Killip, Laurens and Vi{\c{s}}an \cite{KLV2023} obtained global well-posedness in $\lebH^s(\bbt)$ and in $\lebH^s(\bbr)$ for $s>-1/2$.

Next, we consider the case $\alpha\in(1,2)$. Here, Guo \cite{Guo2012} proved local well-posedness in $\lebH^s(\bbr)$, $s>2-\alpha$, following the approach introduced in \cite{IKT2008}, which involves function spaces with frequency-dependent time-localizations. This result was improved by Herr, Ionescu, Kenig and Koch \cite{HIKK2010} by an application of a para-differential gauge transform leading to global well-posedness in $\lebL^2(\bbr)$.
Another approach is due to Molinet and Vento \cite{MV2014} who used modified energy estimates to obtain local well-posedness in $\lebH^s(\bbt)$ for $s>1-\alpha/2$.
Later, following the approach of \cite{IKT2008}, this result was improved by Schippa \cite{Sch2020}. He obtained global well-posedness in $\lebL^2(\bbt)$ for $\alpha\in(3/2,2)$ and local well-posedness in $\lebH^s(\bbt)$ for $s>3/2-\alpha$ and $\alpha\in(1,3/2]$.

Lastly, we consider the weakly dispersive regime $\alpha\in(0,1)$. Linares, Pilod and Saut \cite{LPS2014} obtained local well-posedness in $\lebH^s(\bbr)$, $s>3/2-3\alpha/8$. This has been improved by Molinet, Pilod and Vento \cite{MPV2018} using modified energies to obtain local well-posedness in $\lebH^s(\bbr)$ for $s>3/2-5\alpha/4$.
To the best of our knowledge, there has been no result in the range $\alpha\in(0,1)$ improving on well-posedness in $\lebH^s(\bbt)$ for $s>3/2$. Note that Hur \cite{Hur2018} demonstrated norm inflation in $\lebH^s(\bbt)$, $s<-2$, for $\alpha\in[-1,2)$, yielding a strong form of ill-posedness.\\

Due to the above-mentioned result by Kato \cite{Kat1975}, there exists for any $R>0$ a timespan $T=T(R)>0$ and a continuous data-to-solution map
\begin{align*}
	S^\infty_T:B_R(0)\subset \lebH^\infty(\bbt)\rightarrow C([0,T];\lebH^\infty(\bbt))
\end{align*}
mapping smooth initial data $u_0$ to the corresponding unique smooth solution of \eqref{eq_PDE}. We will prove that the map $S^\infty_T$ extends to a continuous map $S^s_T:B_R(0)\subset \lebH^s(\bbt)\rightarrow C([0,T];\lebH^s(\bbt))$ for every $s>3/2-\alpha$ leading to:
\begin{thm}\label{thm_main}
	Let $\alpha\in(0,1)$ and $s>3/2-\alpha$. The Cauchy problem \eqref{eq_PDE} is locally well-posed in $\lebH^s(\bbt)$.
\end{thm}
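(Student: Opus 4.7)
The plan is to implement the short-time Fourier restriction method of Ionescu, Kenig and Tataru, following the adaptation by Guo and by Schippa to dispersion-generalized Benjamin-Ono equations. Since the nonlinearity $\dx(u^2)$ produces a derivative loss that cannot be recovered by standard $X^{s,b}$ arguments in the weakly dispersive regime, I would work in function spaces that are $X^{s,1/2,1}$-like on time intervals of frequency-dependent length $T_N$, calibrated to the dispersion strength. Concretely, for each dyadic $N$ introduce an atomic short-time space $F_N$ measuring $N$-frequency localized pieces on intervals of length $T_N$, and assemble $F^s$ by an $\ell^2$-sum of $N^s\|P_N u\|_{F_N}$. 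Define a companion nonlinearity space $\mathcal{N}^s$ analogously and an energy space $E^s$ via $\sup_{t\in[0,T]}\|u(t)\|_{\lebH^s}$.

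The proof then reduces to three core estimates. First, a short-time linear estimate $\|u\|_{F^s}\lesssim \|u\|_{E^s}+\|(\dt+\dx D^\alpha_x)u\|_{\mathcal{N}^s}$, which follows from the atomic structure of $F_N$. Second, a bilinear estimate $\|\dx(uv)\|_{\mathcal{N}^s}\lesssim \|u\|_{F^s}\|v\|_{F^s}$ proved by Littlewood-Paley decomposition into the three regimes: high-high to low (controlled via the resonance function $|\Omega|\sim N^{1+\alpha}$), high-high to high, and the problematic high-low to high case, which dictates the threshold $s>3/2-\alpha$. Third, an energy estimate of the form $\|u\|_{E^s}^2 \lesssim \|u_0\|_{\lebH^s}^2 + T^\delta \|u\|_{F^s}^2\|u\|_{E^s}$, obtained by differentiating frequency-localized $L^2$-norms along the flow, resolving the resulting cubic expression via resonances, and likely symmetrizing or introducing a modified energy to tame the low-high-high part.

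Combining the three estimates gives a bootstrap on a time interval of length depending only on $\|u_0\|_{\lebH^s}$, producing a uniform a priori bound $\|u\|_{F^s}\lesssim \|u_0\|_{\lebH^s}$ for smooth solutions. For uniqueness and continuous dependence I would repeat the bilinear and energy estimates at a lower regularity $s_0<s$ (e.g.\ $s_0=0$ or $s_0=s-\alpha$) for the difference of two solutions; this is feasible because only one copy of the difference is placed in the weaker norm. The theorem then follows by approximating $\lebH^s$ data by smooth data, invoking Kato's local smooth theory, and passing to the limit using the uniform bound together with the difference estimate.

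The main obstacle is the bilinear estimate in the high-low to high regime, where $u_{\mathrm{low}}\dx u_{\mathrm{high}}$ produces a full derivative on the high-frequency factor that the weak dispersion cannot absorb on $[0,T]$. The short-time localization pays for this derivative precisely at $s=3/2-\alpha$, and closing the summation at this endpoint, while simultaneously handling the accompanying low-high-high cubic term in the energy via a modified energy, is the technical heart of the proof.
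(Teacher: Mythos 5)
Your proposal follows the same overall architecture as the paper --- short-time Fourier restriction spaces $\spfc^s_T$, $\spnc^s_T$, $\spec^s_T$ \`a la Ionescu--Kenig--Tataru, a linear estimate, a short-time bilinear estimate, a set of energy estimates, a bootstrap, and a Bona--Smith limiting argument --- so the skeleton is right. However, several of the specific choices you make diverge from the paper in ways worth flagging. First, you calibrate the frequency-dependent time scale to the dispersion strength, i.e. $T_N\sim N^{\alpha-2-\epsilon}$ as in Guo and Schippa. The paper deliberately does \emph{not} do this: it takes $T_N\sim N^{-1-\epsilon}$ (i.e. $\vari$ slightly larger than $1$), which is independent of $\alpha$, and explains in the introduction and in Section~\ref{ss_apr} that the dispersion-calibrated choice only improves the a priori estimate, not the difference estimates, so it cannot lower the well-posedness threshold $s>3/2-\alpha$. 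Second, and more substantively, you say the bilinear estimate in the high-high-low regime is ``controlled via the resonance function.'' In the paper the short-time bilinear estimate (Lemma~\ref{lem_est_spn}) uses \emph{no resonance information at all}: with $\vari>1$ the time localization alone, combined with the elementary convolution bound of Lemma~\ref{lem_con}, pays for the derivative, and the resulting estimate holds for any $b\geq -1/2$, $c>1/2$ and any $\alpha\in(0,1)$. The resonance $|\Omega|\sim|\xi_1^*|^\alpha|\xi_3^*|$ only enters the \emph{energy} estimates through the normal-form integration by parts in time (leading to a quadrilinear estimate, Lemma~\ref{lem_i4}), not the bilinear one. This is precisely what makes the paper's choice of $\alpha$-independent time localization work. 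Third, you suggest running the difference estimate at $s_0=0$ or $s_0=s-\alpha$; the paper uses $s_0=-1/2$ (the endpoint $b\geq -1/2$ of the bilinear estimate) together with a second difference estimate at level $s$ that couples $\norm{v}{\spfc^{-1/2}_T}$ to $\norm{u_2}{\spfc^{s+2-\alpha}_T}$ --- the precise pairing of these two estimates is what lets the Bona--Smith argument close at the stated threshold, and a different choice of $s_0$ would require re-deriving a compatible coupling. Finally, you mention ``modified energy'' as an alternative to symmetrization; the paper does not use a Molinet--Vento-style modified energy but rather the IKT-style route of quadrilinear estimates, commutator expansions of $\chi_{k_1}^2$, and frequency-splitting at a parameter $n$ that is optimized at the end. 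None of these deviations is fatal, but the misattribution of the resonance's role to the bilinear estimate, and the consequent choice of dispersion-dependent time localization, obscure what is arguably the paper's central structural observation.
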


\newpage
\subsection{Strategy of the proof}
Let us comment on the strategy of the proof. The general idea is to follow the approach by Ionescu, Kenig and Tataru \cite{IKT2008} involving frequency-dependent time-localized function spaces.
We begin by giving a heuristic argument indicating how these time-localizations improve the estimate of the aforementioned problematic term $u_{low}\dx u_{high}$.
Let $u_K$, respectively $u_N$, denote the localization of a function $u$ to frequencies of size $K$, respectively $N$. Fix $K\leq N$, $s\geq1/2$ and let $I$ be an interval of length $N^{-1}$. Then, we can bound the $L^1(I;\lebH^s(\bbt))$-norm of $u_K\dx u_N$ using Hölder's and Bernstein's inequality by
\begin{align*}
	\norm{u_K\dx u_N}{\lebL^1_I\lebH^s}
	\lesssim
	N^{1+s}K^{1/2}\abs{I}\norm{u_K}{\lebL^\infty_I\lebL^2}\norm{u_N}{\lebL^\infty_I\lebL^2}
	\lesssim
	\norm{u_K}{\lebL^\infty_I\lebH^s}\norm{u_N}{\lebL^\infty_I\lebH^s}.
\end{align*}
Note that the spatial regularity is the same on both sides thanks to the time-localization.

As in \cite{IKT2008}, we will introduce function spaces $\spfc^s_T$, $\spnc^s_T$ and $\spec^s_T$. The first and third space can be thought of as $C([0,T];\lebH^s)$, while the second function space is close to $\lebL^1([0,T];\lebH^s)$. The distinctive feature of the spaces $\spfc^s_T$ and $\spnc^s_T$ is that they are equipped with norms that involve frequency-dependent time-localizations.
These time-localizations allow us to prove estimates of the form \begin{align*}
	\begin{cases}
		\norm{u}{\spfc^s_T}
		&\lesssim
		\norm{u}{\spec^s_T}+\norm{\dx(u^2)}{\spnc^s_T},
		\\
		\norm{\dx(u^2)}{\spnc^s_T}
		&\lesssim
		T^d\norm{u}{\spfc^s_T}^2,
		\\
		\norm{u}{\spec^s_T}^2
		&\lesssim
		\norm{u_0}{\lebH^s}^2
		+
		F_1(T,\norm{u}{\spfc^s_T}),
	\end{cases}
\end{align*}
where $F_1$ is a function and $d\in(0,1]$ is a small but positive number, see Section \ref{s_proof} for the precise estimates. Above, the linear estimate is a consequence of Duhamel's principle, whereas the nonlinear estimate relies on the time-localization -- similar to the heuristic argument above. The energy estimates follow with considerably more effort. Here, we will use quadrilinear estimates, commutator estimates and symmetrizations.

For differences of solutions $v=u_1-u_2$ and $w=u_1+u_2$, we analogously obtain the estimates
\begin{align*}
	\begin{cases}
		\norm{v}{\spfc^{-1/2}_T}
		&\lesssim
		\norm{v}{\spec^{-1/2}_T}+\norm{\dx(vw))}{\spnc^{-1/2}_T},
		\\
		\norm{\dx (vw)}{\spnc^{-1/2}_T}
		&\lesssim
		T^d\norm{v}{\spfc^{-1/2}_T}\norm{w}{\spfc^s_T},
		\\
		\norm{v}{\spec^{-1/2}_T}^2
		&\lesssim
		\norm{v_0}{\lebH^{-1/2}}^2
		+
		F_2(T,\norm{v}{\spfc^{-1/2}_T},\norm{u_1}{\spfc^s_T},\norm{u_2}{\spfc^s_T}),
	\end{cases}
\end{align*}
as well as
\begin{align*}
	\begin{cases}
		\norm{v}{\spfc^s_T}
		&\lesssim
		\norm{v}{\spec^s_T}+\norm{\dx(vw)}{\spnc^s_T},
		\\
		\norm{\dx (vw)}{\spnc^s_T}
		&\lesssim
		T^d\norm{v}{\spfc^s_T}\norm{w}{\spfc^s_T},
		\\
		\norm{v}{\spec^s_T}^2
		&\lesssim
		\norm{v_0}{\lebH^s}^2
		+
		F_3(T,\norm{v}{\spfc^{-1/2}_T},\norm{v}{\spfc^s_T},\norm{u_2}{\spfc^s_T},\norm{u_2}{\spfc^{s+2-\alpha}}).
	\end{cases}
\end{align*}
After bootstrapping the previously given sets of estimates, we get an a priori estimate in $\lebH^s(\bbt)$ for smooth solutions of \eqref{eq_PDE} as well as estimates for the difference of two smooth solutions in $\lebH^{-1/2}(\bbt)$ and in $\lebH^s(\bbt)$. Equipped with these results, we obtain Theorem \ref{thm_main} following the classical Bona-Smith argument \cite{BS1975}.\\

Next, we want to comment on the choice of the time-localization. To prove Theorem \ref{thm_main}, we will choose a time-localization that restricts a function $u_N$ to a time interval of length $N^{-1-\epsilon}$, $\epsilon>0$. Note that the localization does not depend on $\alpha$ and that it fits to the heuristic argument given above.
In comparison to our approach, Guo \cite{Guo2012} and Schippa \cite{Sch2020}, when studying equation \eqref{eq_PDE} in the range $\alpha\in(1,2)$, used a time-localization that restricts a function $u_N$ to an interval of size $N^{\alpha-2-\epsilon}$, $\epsilon>0$. This localization appears in the heuristic argument if we use a bilinear estimate instead of Hölder's inequality. Choosing latter time-localization, we can repeat our proof with minor modifications and obtain an improved a priori estimate, but no improvement for the difference estimates, see Section \ref{ss_apr}.\\

At last, we argue that Theorem \ref{thm_main} holds for a slightly larger class of dispersive PDEs than \eqref{eq_PDE}. As in \cite{MV2014}, let us consider
\begin{equation}\label{eq_PDE_gen}
	\begin{cases}
		\dt u + L_{i\omega} u
		&=
		\dx (u^2),
		\\
		\hfill u(0)
		&=
		u_0.
	\end{cases}
\end{equation}
Here, $L_{i\omega}$ is the Fourier multiplier defined by $\mathcal{F}(L_{i\omega}u)=i\omega\mathcal{F}(u)$ and $\omega$ is an odd, real-valued function in $C^1(\bbr)\cap C^2(\bbr-\{0\})$. We need two further assumptions on $\omega$ to guarantee that Theorem \ref{thm_main} also holds for \eqref{eq_PDE_gen}: Firstly, we require that for some $\kappa>0$ we have the following control of the first two derivatives of $\omega$:
\begin{align*}
	\forall\abs{\xi}>\kappa:
	\qquad
	\abs{\omega'(\xi)}
	\sim
	\abs{\xi}^\alpha,
	\qquad
	\abs{\omega''(\xi)}
	\sim
	\abs{\xi}^{\alpha-1}.
\end{align*}
Secondly, denoting by $\abs{\xi_1^*}$ and $\abs{\xi_3^*}$ the maximum and minimum of $\abs{\xi_1}$, $\abs{\xi_2}$ and $\abs{\xi_3}$, we assume that the resonance function satisfies
\begin{align*}
	\abs{\omega(\xi_1)+\omega(\xi_2)+\omega(\xi_3)}
	\sim
	\abs{\xi_1^*}^\alpha\abs{\xi_3^*}
\end{align*}
for all $\xi_1+\xi_2+\xi_3=0$ with $\abs{\xi_1^*}>\kappa$.
Under these conditions on $\omega$, Theorem \ref{thm_main} also holds for \eqref{eq_PDE_gen}. A non-trivial example is
\begin{align*}
	\omega(\xi)
	=
	\left(\tanh(\xi)\xi(1+\tau\xi^2)\right)^{1/2},
	\qquad
	\tau>0,
\end{align*}
for which equation \eqref{eq_PDE_gen} becomes the Whitham equation for capillary waves, see Chapter $12$ in \cite{Whi1974}.
\subsection{Structure of the paper}
In Section \ref{s_preliminaries} we fix the notation and define the frequency-dependent time-localized function spaces $\spfc^s_T$, $\spnc^s_T$ and $\spec^s_T$. Moreover, we recall some properties to indicate their connection. In particular, we state the linear estimate required for the bootstrap argument.
In Section \ref{s_multi} we prove estimates exploiting the structure of the involved function spaces leading to the nonlinear estimates. We also prove a quadrilinear estimate that is relevant for the energy estimates, which are dealt with in Section \ref{s_energy}. In the last section we briefly recall the arguments due to Bona and Smith and complete the proof of Theorem \ref{thm_main}.


\section{Preliminaries}\label{s_preliminaries}
\subsection{Notation}\label{ss_notation}

For positive real numbers $a,b>0$ we write $a\lesssim b$ if $a\leq Cb$ holds for some constant $C>0$. Moreover, we write $a\gg b$ if $a\not\lesssim b$ as well as $a\sim b$ if $a\lesssim b$ and $b\lesssim a$ are satisfied.

In the following, without mentioning, we assume $t\in\bbr$, $\tau\in\bbr$, $x\in\bbt=\bbr/2\pi$ and $\xi\in\bbz$ to denote variables in time, Fourier-time, space and frequency.
Given two variables $\xi_1$ and $\xi_2$, we occasionally abbreviate their sum by $\xi_{12}=\xi_1+\xi_2$. Moreover, for variables $\xi_1,\dots,\xi_n$, $n\geq 2$, we denote by $\xi_1^*,\dots,\xi_n^*$ a reordering satisfying $\abs{\xi_1^*}\geq\dots\geq\abs{\xi_n^*}$. We set $[n]=\{1,\cdots, n\}$.

The Fourier transform on the torus, respectively on the real line, is given by
\begin{align*}
	&\hat{f}(\xi)
	=
	\mathcal{F}_xf(\xi)
	=
	\int_\bbt f(x)e^{-ix\xi}dx,
	&\mathcal{F}_tg(\tau)
	=
	\int_\bbr g(t)e^{-it\tau}d\tau.
\end{align*}
Here, the indices $x$ and $t$ indicate the transformed variable. We abbreviate $\mathcal{F}_t\mathcal{F}_x$ by $\mathcal{F}_{t,x}$.

We fix an even smooth function $\chi\in C^\infty(\bbr)$ satisfying $1_{[-5/4,5/4]}\leq \chi\leq1_{[-8/5,8/5]}$. For every $k\geq 0$, set
\begin{align*}
	\chi_k(\xi)
	=
	\chi(2^{-k}\xi)-\chi(2^{1-k}\xi),
	\qquad
	\chi_{\leq k}(\xi)
	=
	\sum_{l=0}^k\chi_l(\xi),
	\qquad
	\chi_{> k}(\xi)
	=
	\sum_{l>k}\chi_l(\xi).
\end{align*}
Based on this, we define Littlewood-Paley projectors by
\begin{align*}
	P_ku
	=
	\mathcal{F}_x^{-1}\left(\chi_k\mathcal{F}_xu\right),
	\qquad
	P_{\leq k}u
	=
	\mathcal{F}_x^{-1}\left(\chi_{\leq k}\mathcal{F}_xu\right),
	\qquad
	P_{>k}u
	=
	\mathcal{F}_x^{-1}\left(\chi_{>k}\mathcal{F}_xu\right).
\end{align*}
Similarly, we fix $\eta_0\in C^\infty(\bbr)$ satisfying $1_{[-5/4,5/4]}\leq \eta_0\leq1_{[-8/5,8/5]}$ and define
\begin{align*}
	\eta_k(\xi)
	=
	\eta_0(2^{-k}\xi)-\eta_0(2^{1-k}\xi),
	\qquad
	\eta_{\leq k}(\xi)
	=
	\sum_{l=0}^k \eta_l(\xi)
\end{align*}
for any $k\geq 1$. Mostly, we will use latter functions to restrict the modulation variable $\tau-\omega(\xi)$ to dyadic ranges, where the dispersion relation $\omega$ is given by
\begin{align*}
	\omega(\xi)
	=
	-\xi\abs{\xi}^\alpha.
\end{align*}
\subsection{Function spaces}\label{ss_function_space}

In this section we will define the function spaces $\spfc^s_T$, $\spnc^s_T$ and $\spec^s_T$. Essentially, these function spaces appeared first in \cite{IKT2008} in the context of the analysis of the KP-I equation posed on $\bbr^2$. For an adaption and thorough analysis of these spaces on $\bbt$, we refer to \cite{GO2018}.

The definition of the spaces $\spfc^s_T$ and $\spnc^s_T$ is based on the space $\spx^k$, $k\in\bbn$, given by
\begin{equation*}
	\spx^k
	:=
	\{f\in\lebL^2(\bbr;\lebl^2(\bbz\cap\supp \chi_k)): \norm{f}{\spx^k} = \sum_{l\in\bbn}2^{l/2}\norm{\eta_l(\tau-\omega(\xi))f(\tau,\xi)}{\lebL^2_\tau\lebl^2_\xi} <\infty\}.
\end{equation*}
Note that $\spx^k$ is closely related to the Bourgain space $X^{0,1/2}$ and shares many properties with it. The following proposition recalls three of these properties. The first two properties follow from direct calculations, whereas the third property can be proved similarly to Lemma 3.5 in \cite{GO2018}.
\begin{pro}
	We have the following statements:
	\begin{itemize}
		\item[(1)]	Let $k, L\in\bbn$ and write $\eta_L$ for $\eta_{\leq L}$. For all $f_k\in\spx^{k}$ we have
		\begin{align}\label{eq_est_mod_l2xk}
			\sum_{l\geq L}2^{l/2}\norm{\eta_l(\tau-\omega (\xi))\int_\bbr |f_k(\tau',\xi)|2^{-L}(1+2^{-L}|\tau-\tau'|)^{-4}d\tau'}{\lebl^2_\xi\lebL^2_\tau}
			\lesssim
			\norm{f_k}{\spx^{k}}.
		\end{align}
		\item[(2)]	Let $k,l\in\bbn$, $t_0\in\bbr$ and let $\gamma\in\mathcal{S}(\bbr)$ be a Schwartz function. For all $f_k\in\spx^k$ we have
		\begin{equation}\label{eq_est_xk_loc}
			\norm{\mathcal{F}_{t,x}\left(\gamma(2^l(t-t_0))\mathcal{F}_{t,x}^{-1}f_k\right)}{\spx^k}
			\lesssim_\gamma
			\norm{f_k}{\spx^k}.
		\end{equation}
		\item[(3)] Let $k, L\in\bbn$, $b\in(0,1/2]$, $T\in(0,1]$ and write $\eta_L$ for $\eta_{\leq L}$. Let $I$ be an interval with $0\in I$ such that $\abs{I}\sim T$ holds. For all $u$ supported in $\bbr\times\bbt$ we have
		\begin{equation}\label{eq_est_l2txk}
			\sup_{l\geq L}2^{lb}\norm{\eta_l(\tau-\omega(\xi))\fttx{1_I(t)P_ku}(\tau,\xi)}{\lebl^2_\xi\lebL^2_\tau}
			\lesssim
			T^{1/2-b}\norm{\fttx{P_ku}}{\spx^k}.
		\end{equation}
	\end{itemize}
\end{pro}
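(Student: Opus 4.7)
All three items are proved by modulation decomposition. Writing $v = f_k$ (resp.\ $v = \fttx{P_k u}$ in (3)) and $v_{l'} = \eta_{l'}(\tau-\omega(\xi))\,v$, each statement reduces to bilinear bounds indexed by pairs $(l,l')$ of output versus input modulation levels. The main obstacle will be (3), where the sharp cutoff $1_I$ has a Fourier transform that is not in $L^1$.

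For (1), the kernel $K_L(\sigma) = 2^{-L}(1+2^{-L}|\sigma|)^{-4}$ is an $L^1$-normalized approximate identity at scale $2^L$. In the diagonal regime $|l-l'|\leq 3$, Young's inequality in $\tau$ gives $\norm{\eta_l(K_L*_\tau|v_{l'}|)}{\lebL^2_\tau\lebl^2_\xi}\lesssim \norm{v_{l'}}{\lebL^2_\tau\lebl^2_\xi}$, whose weighted sum over $l\sim l'$ is controlled by $\norm{f_k}{\spx^k}$. In the off-diagonal regime $|l-l'|\geq 4$, the supports of $\eta_l(\tau-\omega)$ and $\eta_{l'}(\tau'-\omega)$ force $|\tau-\tau'|\sim 2^{\max(l,l')}$, giving the pointwise gain $|K_L(\tau-\tau')|\lesssim 2^{3L-4\max(l,l')}$; Cauchy--Schwarz in $\tau'$ combined with the dyadic support sizes of $\eta_l,\eta_{l'}$ and geometric summation in $l\geq L$ and $l'\geq 0$ completes the proof.

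For (2), the Fourier transform of $\gamma(2^l(t-t_0))$ in $t$ equals $2^{-l}\hat\gamma(2^{-l}\tau)\,e^{-it_0\tau}$, dominated in modulus by $C_N\,2^{-l}(1+2^{-l}|\tau|)^{-N}$ for every $N$, i.e.\ the shape of $K_l$ from (1). The argument of (1) adapts directly. The contribution from output modulations $l_1 \geq l$ follows by direct application of (1); the low-modulation outputs $l_1 < l$ are handled by the same case analysis, using $|K_l(\tau-\tau')|\lesssim 2^{-l}$ on the support, Cauchy--Schwarz in $\tau'$, and a geometric sum.

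For (3), one has only $|\widehat{1_I}(\sigma)|\lesssim \min(T,|\sigma|^{-1})$. Setting $B_{l,l'}:= \eta_l(\tau-\omega(\xi))(\widehat{1_I}*_\tau v_{l'})$, I combine three base estimates: (i) a Bernstein--Plancherel bound $\norm{B_{l,l'}}{\lebL^2_\tau\lebl^2_\xi}\leq \norm{1_I w_{l'}}{\lebL^2_{t,x}}\lesssim T^{1/2}\,2^{l'/2}\norm{v_{l'}}{\lebL^2_\tau\lebl^2_\xi}$, where $w_{l'} = \mathcal{F}_{t,x}^{-1}v_{l'}$ and the auxiliary estimate $\norm{w_{l'}}{\lebL^\infty_t\lebL^2_x}\lesssim 2^{l'/2}\norm{v_{l'}}{\lebL^2_\tau\lebl^2_\xi}$ follows from demodulating by $e^{-i\omega(\xi)t}$ and applying Bernstein in $t$; (i') the trivial Plancherel bound $\norm{B_{l,l'}}{\lebL^2_\tau\lebl^2_\xi}\leq \norm{v_{l'}}{\lebL^2_\tau\lebl^2_\xi}$; and (ii) the off-diagonal bound $\norm{B_{l,l'}}{\lebL^2_\tau\lebl^2_\xi}\lesssim 2^{-|l-l'|/2}\norm{v_{l'}}{\lebL^2_\tau\lebl^2_\xi}$ for $|l-l'|\geq 4$, produced as in (1). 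For $l'\geq l$ I interpolate (i) with (i') at exponent $\theta = 1-2b$; for $l' < l$ I interpolate (i) with (ii) at the same $\theta$. In each regime the value of $\theta$ is tuned so that the $2^{lb}$ prefactor is exactly absorbed, producing a bound of the form $T^{1/2-b}\cdot 2^{l'/2}\norm{v_{l'}}{\lebL^2_\tau\lebl^2_\xi}$ times a geometric factor summable in $|l-l'|$. Summing over $l'\geq 0$ gives $T^{1/2-b}\norm{v}{\spx^k}$ uniformly in $l\geq L$, which is the claim. The technical heart of the argument is this interpolation: the absence of $L^1$-control on $\widehat{1_I}$ must be compensated by the modulation structure to yield the gain $T^{1/2-b}$ independently of $L$.
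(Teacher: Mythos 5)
The paper offers no proof here: items (1) and (2) are labelled direct calculations and (3) is referred to Lemma 3.5 of Guo--Oh, so your attempt fills in details the paper leaves open. The modulation decomposition is the right framework, and the interpolation in (3) -- tuning $\theta$ so that the $T^{1/2}$ from (i) produces exactly $T^{1/2-b}$ while the modulation weights telescope -- captures the correct mechanism.

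The argument has one genuine gap, in the boundary case $l=L$. Because $\eta_L$ here abbreviates $\eta_{\leq L}$, for $l=L$ the output region $\{|\tau-\omega(\xi)|\lesssim 2^L\}$ is a ball, not an annulus. Your off-diagonal step for $|l-l'|\geq 4$ asserts that the supports of $\eta_l$ and $\eta_{l'}$ force $|\tau-\tau'|\sim 2^{\max(l,l')}$; this is false when $l=L$ and $l'<L-3$, since $\tau$ and $\tau'$ may then both lie within $O(2^{l'})$ of $\omega(\xi)$. In (1) this is easy to repair: use $\|\eta_{\leq L}(K_L*|v_{l'}|)\|_{L^2_\tau}\leq 2^{L/2}\|K_L\|_{L^\infty_\tau}\|v_{l'}\|_{L^1_\tau}\lesssim 2^{-L/2}2^{l'/2}\|v_{l'}\|_{L^2_\tau}$, which after the weight $2^{L/2}$ gives exactly the target $2^{l'/2}\|v_{l'}\|_{L^2_\tau}$. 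In (3) there is no analogous $2^{-L}$ compensation hidden in $\widehat{1_I}$, and without your bound (ii) the pair $(l,l')=(L,l')$ with $l'<L-3$ must be closed using (i) alone, which yields $2^{Lb}T^{1/2}2^{l'/2}\|v_{l'}\|_{L^2}$; this matches $T^{1/2-b}2^{l'/2}\|v_{l'}\|_{L^2}$ only under the additional hypothesis $2^L T\lesssim 1$. That constraint does hold in every invocation of (3) in the paper (the argument is always a function already localized to a time window of length $\lesssim 2^{-L}$ before the sharp cutoff is applied), but it is not recorded in the statement, and your proof needs to acknowledge both that (ii) is unavailable for the cap $\eta_{\leq L}$ and that the $l=L$, $l'<L$ contribution is recovered only under this constraint.
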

Let $\vari\geq 0$ determine the strength of the frequency-dependent time-localization. Then, we define
\begin{align*}
	\spfa^k
	&:=
	\{u_k\in C(\bbr;P_k\lebL^2(\bbt)): \norm{u_k}{\spfa^k} =\sup_{t_k}\norm{\mathcal{F}_{t,x}\left( u_k\eta_0\left(2^{\mathfrak{I} k}(\cdot-t_k)\right)\right)}{\spx^{k}}<\infty\},
	\\
	\spna^k
	&:=
	\{u_k\in C(\bbr;P_k\lebL^2(\bbt)): \norm{u_k}{\spna^k} =\sup_{t_k}\norm{(\tau-\omega(\xi)+i2^{\mathfrak{I}k})^{-1}\mathcal{F}_{t,x}\left( u_k\eta_0\left(2^{\mathfrak{I} k}(\cdot-t_k)\right)\right)}{\spx^{k}}<\infty\}.
\end{align*}
Observe that $\vari=0$ corresponds to a frequency-independent time-localization. It will become evident that choosing $\vari$ slightly larger than one is sufficient for our proof.

For $T\in(0,1]$ we define the corresponding restriction spaces to the interval $[-T,T]$ by
\begin{align*}
	\spfb_{T}^k
	&:=
	\{u_k\in C([-T,T];P_k\lebL^2(\bbt)): \norm{u_k}{\spfb_{T}^k} =\inf_{u_k=\tilde{u}_k|_{[-T,T]}}\norm{\tilde{u}_k}{\spfa_{k}}<\infty\},
	\\
	\spnb^k_T
	&:=
	\{u_k\in C([-T,T];P_k\lebL^2(\bbt)): \norm{u_k}{\spnb^k_T} =\inf_{u_k=\tilde{u}_k|_{[-T,T]}}\norm{\tilde{u}_k}{\spna_{k}}<\infty\}.
\end{align*}

When working with functions in $\spfb^k_T$ or $\spnb^k_T$, it will be sufficient to consider extensions of them by functions in $\spfa^k$ or $\spna^k$, which are supported in $[-2T,2T]$. More precisely, Lemma 3.6 in \cite{GO2018} implies:
\begin{pro}\label{lem_ext_spf_spn}
	Let $k\in\bbn$ and $T\in (0,1]$. For every $u\in\spfb_{T}^k$ and every $v\in\spnb^k_T$ there are extensions $\tilde{u}\in\spfa^k$ and $\tilde{v}\in\spna^k$ having  temporal support in $[-2T,2T]$ and satisfying the estimates
	\begin{equation*}
		\norm{\tilde{u}}{\spfa^k}
		\lesssim
		\norm{u}{\spfb_{T}^k},
		\qquad
		\norm{\tilde{u}}{\spna^k}
		\lesssim
		\norm{u}{\spnb^k_T}.
	\end{equation*}
\end{pro}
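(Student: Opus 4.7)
The plan is to build the extensions $\tilde u$ and $\tilde v$ explicitly by multiplying near-minimizing extensions of $u$ and $v$ by a smooth cutoff adapted to the interval $[-2T,2T]$, and then to verify that this cutoff multiplication is bounded on the base modulation space $\spx^k$ uniformly in $T\in(0,1]$ and $k\in\bbn$. Fix once and for all $\psi\in C_c^\infty(\bbr)$ with $\psi\equiv 1$ on $[-1,1]$ and $\supp\psi\subset[-2,2]$, and set $\psi_T(t):=\psi(t/T)$. Given $u\in\spfb_T^k$, the definition of the restriction space lets us pick $\bar u\in\spfa^k$ with $\bar u|_{[-T,T]}=u$ and $\norm{\bar u}{\spfa^k}\leq 2\norm{u}{\spfb_T^k}$; setting $\tilde u:=\psi_T\bar u$ then gives an extension of $u$ with temporal support in $[-2T,2T]$, so the statement reduces to the uniform bound $\norm{\tilde u}{\spfa^k}\lesssim\norm{\bar u}{\spfa^k}$. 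The analogous construction starting from a near-minimizer $\bar v\in\spna^k$ will handle the $\spnb_T^k$-case.

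Writing out the definition of $\spfa^k$, the reduced task is to show
\begin{align*}
	\norm{\mathcal{F}_{t,x}\!\bigl(\psi_T(t)\,\bar u(t,x)\,\eta_0(2^{\vari k}(t-t_k))\bigr)}{\spx^k}
	\lesssim_\psi
	\norm{\mathcal{F}_{t,x}\!\bigl(\bar u(t,x)\,\eta_0(2^{\vari k}(t-t_k))\bigr)}{\spx^k}
\end{align*}
uniformly in $t_k\in\bbr$. On the Fourier side in $t$, multiplication by $\psi_T$ becomes convolution in $\tau$ with $\hat\psi_T(\tau)=T\hat\psi(T\tau)$, which is pointwise dominated by $C_N\,2^{-L_0}(1+2^{-L_0}|\tau|)^{-N}$ for every $N\in\bbn$, where $L_0:=\max(0,\lceil\log_2 T^{-1}\rceil)\in\bbn$. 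Estimate \eqref{eq_est_mod_l2xk} applied with $L=L_0$ handles the contribution to the output $\spx^k$-norm from scales $l\geq L_0$. For the remaining finitely many scales $l<L_0$, one decomposes the input dyadically in its own modulation scale $l'$: input at scales $l'\geq L_0$ contributes negligibly thanks to the pointwise decay of $\hat\psi_T$ at distance $\gg 2^{L_0}$, and input at scales $l'<L_0$ contributes at most $\sim 2^{-L_0}\cdot 2^{(l+l')/2}\norm{\eta_{l'}f}{\lebl^2_\xi\lebL^2_\tau}$, which after summation against $\sum_{l,l'<L_0}$ balances with the factor $2^{-L_0}$ into a total bound by $\norm{f}{\spx^k}$.

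The main obstacle is the $\spnb_T^k$-case, in which the $\spna^k$-norm carries the extra modulation weight $(\tau-\omega(\xi)+i2^{\vari k})^{-1}$ that does not commute with the $\tau$-convolution induced by $\psi_T$. Using the identity $\tau'-\omega=(\tau-\omega)-(\tau-\tau')$ inside the convolution, one decomposes the object to be estimated as $\hat\psi_T*_\tau G$, where $G:=(\tau-\omega+i2^{\vari k})^{-1}\mathcal{F}_{t,x}(\bar v\,\eta_0(2^{\vari k}(\cdot-t_k)))$ satisfies $\norm{G}{\spx^k}\lesssim\norm{v}{\spnb_T^k}$, plus a commutator of the form $(\tau-\omega+i2^{\vari k})^{-1}\bigl[(s\hat\psi_T(s))*_\tau G\bigr]$ involving the kernel $s\hat\psi_T(s)$. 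The main term is controlled by the $\spfa^k$-argument applied to $G$; the commutator is controlled by a case analysis in the relative size of $T^{-1}$ and $2^{\vari k}$: when $T^{-1}\lesssim 2^{\vari k}$ the factor $|s|\lesssim T^{-1}$ in the kernel is absorbed by $|(\tau-\omega+i2^{\vari k})^{-1}|\lesssim 2^{-\vari k}$, while when $T^{-1}\gg 2^{\vari k}$ the convolution is at modulation $\gtrsim T^{-1}\gg 2^{\vari k}$, so the weight is essentially $|\tau-\omega|^{-1}$ and absorbs the extra factor via a second application of \eqref{eq_est_mod_l2xk}. This yields $\norm{\tilde v}{\spna^k}\lesssim\norm{\bar v}{\spna^k}$, completing the proof.
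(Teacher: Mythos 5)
Your strategy — construct $\tilde u$, $\tilde v$ by multiplying near-minimizing extensions by a smooth cutoff $\psi_T$ at scale $T$ and verify that this multiplication is bounded on $\spx^k$ — is the standard one, and is in the spirit of the cited reference \cite{GO2018}. The $\spfa^k$-case is handled correctly: for output modulations $l\geq L_0$ the pointwise domination of $\hat\psi_T$ by the kernel in \eqref{eq_est_mod_l2xk} suffices, and for $l<L_0$ your input decomposition in $l'$ together with the bound $\norm{\eta_l(\hat\psi_T\ast\eta_{l'}f)}{L^2}\lesssim 2^{-L_0}2^{(l+l')/2}\norm{\eta_{l'}f}{L^2}$ closes after the double sum. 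The algebraic decomposition of the $\spna^k$-weight into a main term $\hat\psi_T\ast G$ (handled by the $\spfa^k$-argument) plus a commutator $(\tau-\omega+i2^{\vari k})^{-1}\bigl[(s\hat\psi_T(s))\ast G\bigr]$ is also correct, and Case 1 ($T^{-1}\lesssim 2^{\vari k}$) works: there $\norm{s\hat\psi_T}{L^1}\sim T^{-1}$ is absorbed by $\abs{(\tau-\omega+i2^{\vari k})^{-1}}\leq 2^{-\vari k}$ pointwise, after which \eqref{eq_est_mod_l2xk} applies.

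The gap is in Case 2 ($T^{-1}\gg 2^{\vari k}$). You assert that \enquote{the convolution is at modulation $\gtrsim T^{-1}$}, but this is not justified and does not hold: the kernel $s\hat\psi_T(s)$ is a mean-zero bump spread over the whole range $\abs{s}\lesssim T^{-1}$, not concentrated at $\abs{s}\sim T^{-1}$, so convolving it with $G$ localized at low modulation $2^{l'}<T^{-1}$ produces output at all modulations up to $\sim T^{-1}$, not just $\gtrsim T^{-1}$. If you run your own estimate on the contribution from output $l$ and input $l'$ with $\vari k\leq l<l'<L_0$ — using $\sup_{\abs{s}\sim 2^{l'}}\abs{s\hat\psi_T(s)}\sim 2^{l'-L_0}$ — you get a term $2^{l'-L_0}2^{l'/2}\norm{\eta_{l'}G}{L^2}$ per pair $(l,l')$, which is independent of $l$; summing over $l\in[\vari k,l')$ then produces a factor $l'-\vari k$, and taking $l'=L_0-O(1)$ yields a bound of order $(L_0-\vari k)\,\norm{G}{\spx^k}\sim\log(1/T)\,\norm{G}{\spx^k}$. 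This is not uniform in $T$, so the stated $\lesssim$ is not established in the regime $T\ll 2^{-\vari k}$, which is exactly where the conclusion is needed in the bootstrap as $T\to 0$. You would need an additional idea here — e.g., a sharper exploitation of the cancellation from $\int s\hat\psi_T(s)\,ds=0$, or a different splitting of $G$ by modulation relative to $L_0$ — to close the argument.
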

We equip the inhomogeneous Sobolev space of real-valued functions on $\bbt$ for $s\in\bbr$ with the norm
\begin{equation*}
	\norm{u}{\lebH^s}^2
	=
	\hat{u}(0)^2
	+
	\sum_{k\in\bbn} 2^{2ks}\norm{P_ku}{\lebL^2}^2.
\end{equation*}
We also write $\lebH^\infty(\bbt) := \bigcap_{s\in\bbr} \lebH^s(\bbt)$ and denote by $\lebH^\infty_c(\bbt)$ the set of $\lebH^\infty(\bbt)$-functions $u$ with mean $\hat{u}(0)=c$.
For $T\in(0,1]$ and $s\in\bbr$, define the function space $\spfc^s_T$ by 
\begin{equation*}
	\spfc^s_T
	:=
	\{u\in C([-T,T];\lebH^\infty_0(\bbt)):\norm{u}{\spfc^s_T}^2 =\sum_{k\in\bbn}2^{2ks}\norm{P_ku}{\spfb_{T}^k}^2<\infty\},
\end{equation*}
the space $\spnc^s_T$ by
\begin{equation*}
	\spnc^s_T
	:=
	\{u\in C([-T,T];\lebH^\infty_0(\bbt)):\norm{u}{\spnc^s_T}^2 =\sum_{k\in\bbn}2^{2ks}\norm{P_ku}{\spnb^k_{T}}^2<\infty\}
\end{equation*}
and the energy space $\spec^s_T$ by
\begin{equation*}
	\spec^s_T
	:=
	\{u\in C([-T,T];\lebH^\infty_0(\bbt)):\norm{u}{\spec^s_T}^2 = \sum_{k\in\bbn} 2^{2ks}\sup_{t_k\in[0,T]}\norm{P_ku(t_k)}{\lebL^2(\bbt)}^2<\infty\}.
\end{equation*}
In the subsequent proposition we recall the continuity of the norms $\spec^s_T$ and $\spnc^s_T$ with respect to $T$ and state a linear estimate which is needed for the bootstrap argument in Section \ref{s_proof}. For the proofs, we refer to Lemma 3.3, Proposition 8.1 and Proposition 4.1 in \cite{GO2018}.
\begin{pro}\label{pro_est_spf}
	Let $T\in(0,1]$ and $s\in\bbr$.
	\begin{itemize}
		\item[(1)]	For all $u\in\spfc^s_T$ we have
		\begin{equation}\label{eq_est_emb_spfc}
			\norm{u}{\lebL^\infty_T\lebH^s}
			\lesssim
			\norm{u}{\spfc^s_T}.
		\end{equation}
		\item[(2)]	
		Let $u, f\in C([-1,1];\lebH^\infty_0(\bbt))$. Then $T\mapsto\norm{u}{\spec^s_T}$ and $T\mapsto\norm{f}{\spnc^s_T}$ are non-decreasing continuous functions for $T\in(0,1]$. Moreover, we have
		\begin{align*}
			\lim_{T\rightarrow 0} 	\norm{u}{\spec^s_T}+\norm{f}{\spnc^s_T}
			=
			\norm{u_0}{\lebH^s}.
		\end{align*}
		\item[(3)]	Let $u,f\in C([-T,T];\lebH^\infty_0(\bbt))$ satisfy
		\begin{equation*}
			\partial_tu
			+
			\dx D^\alpha_x u
			=
			f\,\,\text{in }(-T,T)\times\bbt.
		\end{equation*}
		Then, we have
		\begin{equation}\label{eq_est_spf}
			\norm{u}{\spfc^s_T}
			\lesssim
			\norm{u}{\spec^s_T}+\norm{f}{\spnc^s_T}.
		\end{equation}
	\end{itemize}
\end{pro}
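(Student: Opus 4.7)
I address the three items in turn, each reducing to standard manipulations within the short-time framework.

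\textbf{(1)} First I would observe that $\spx^k$ embeds into $C_t\lebl^2_\xi$: writing $f\in\spx^k$ in its dyadic modulation decomposition and applying Cauchy--Schwarz in $\tau$ on each piece, one obtains $\norm{\mathcal{F}_{t,x}^{-1}f(t,\cdot)}{\lebl^2_\xi}\leq\sum_l 2^{l/2}\norm{\eta_l(\tau-\omega(\xi))f}{\lebL^2_\tau\lebl^2_\xi}=\norm{f}{\spx^k}$. To conclude (1), extend $P_ku$ to $\widetilde{u}_k\in\spfa^k$ via Proposition~\ref{lem_ext_spf_spn} and, for fixed $t\in[-T,T]$, choose the free parameter in $\norm{\cdot}{\spfa^k}$ to be $t_k=t$; since $\eta_0(0)=1$, this gives $\norm{P_ku(t)}{\lebL^2}\lesssim\norm{\widetilde{u}_k}{\spfa^k}$, and squaring and weighting by $2^{2ks}$ yields \eqref{eq_est_emb_spfc} after taking the infimum over extensions.

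\textbf{(2)} The monotonicity of $T\mapsto\norm{u}{\spec^s_T}$ is immediate from the supremum in its definition, while for $\spnc^s_T$ it follows from the fact that the restriction of any extension to a subinterval is admissible. Continuity in $T$ is proved by combining strong continuity of $u,f:[-1,1]\to \lebH^\infty_0(\bbt)$ with dominated convergence in the dyadic sum, using the rapid decay of $\norm{P_ku}{\lebL^2}$ in $k$. For the limit as $T\to0$, the energy part converges to $\norm{u(0)}{\lebH^s}$ by the strong continuity of $u$. The $\spnc^s_T$ part converges to $0$ because, after extending $P_kf$ to a function supported in $[-2T,2T]$, the resolvent symbol $(\tau-\omega(\xi)+i2^{\vari k})^{-1}$ has $\lebL^1_\tau$-mass $\sim 2^{-\vari k}$, so truncating the extension by a factor of time-support $\sim T$ produces a positive power of $T$ at each frequency, which combines with the summability in $k$ coming from $f\in\lebH^\infty_0$.

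\textbf{(3)} This is the heart of the proposition. My plan is to apply $P_k$ to the equation, freeze an arbitrary $t_k\in[-T,T]$, and invoke Duhamel's formula based at $t_k$ to rewrite $\eta_0(2^{\vari k}(t-t_k))P_ku(t)$ as the sum of $\eta_0(2^{\vari k}(t-t_k))e^{-(t-t_k)\dx D^\alpha_x}P_ku(t_k)$ and $\eta_0(2^{\vari k}(t-t_k))\int_{t_k}^t e^{-(t-s)\dx D^\alpha_x}P_kf(s)\,ds$. For the free piece, the smooth cutoff places its space-time Fourier transform in a modulation-$2^{\vari k}$ neighbourhood of $\tau=\omega(\xi)$, so the $\spx^k$ norm is controlled by $\norm{P_ku(t_k)}{\lebL^2}\leq 2^{-ks}\norm{P_ku}{\spec^s_T}$. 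For the Duhamel piece, I would extend $P_kf$ to $\widetilde{f}_k\in\spna^k$ supported in $[-2T,2T]$ via Proposition~\ref{lem_ext_spf_spn}, and then check that, after passing to Fourier variables, the operator acting on $\widetilde{f}_k$ has symbol comparable to $(i(\tau-\omega(\xi))+2^{\vari k})^{-1}$ up to Schwartz remainders, so its $\spx^k$ norm is at most $\norm{\widetilde{f}_k}{\spna^k}$. Taking the supremum over $t_k$, squaring and summing over $k$ delivers \eqref{eq_est_spf}.

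The main obstacle in (3) is replacing the sharp truncation $1_{[t_k,t]}$ from Duhamel's formula by an expression whose space-time Fourier transform is compatible with the definition of $\spna^k$: a naive cut destroys the modulation structure of the resolvent symbol. I would handle this via a standard resolvent representation of the characteristic function combined with \eqref{eq_est_xk_loc} to absorb the $t_k$-dependent smooth multipliers, after which the remainder of the argument reduces to routine dyadic summation.
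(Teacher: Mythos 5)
The paper does not prove Proposition~\ref{pro_est_spf} directly but refers to Lemma~3.3, Proposition~8.1 and Proposition~4.1 in \cite{GO2018}, so your proposal is in fact reconstructing those arguments. Your sketches of items (1) and (3) follow the standard short-time Fourier restriction route and are sound: for (1), the embedding $\spx^k\hookrightarrow \mathcal{F}_t(C_t\lebl^2_\xi)$ via Cauchy--Schwarz in modulation, followed by evaluation at $t_k=t$ and an appeal to Proposition~\ref{lem_ext_spf_spn}, is exactly what is needed; for (3), the frozen-time Duhamel decomposition, the absorption of the free part into $\spec^s_T$, and the identification of the Duhamel symbol with $(\tau-\omega(\xi)+i2^{\vari k})^{-1}$ up to harmless errors is the right picture, and you correctly flag the sharp cutoff $1_{[t_k,t]}$ as the technical obstruction to be handled via a regularized Heaviside together with \eqref{eq_est_xk_loc}.

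There is, however, a genuine gap in item (2). You justify continuity of $T\mapsto\norm{f}{\spnc^s_T}$ by ``strong continuity of $f$ plus dominated convergence in the dyadic sum,'' but that argument only transfers pointwise continuity through an operation that commutes with limits in the obvious way. It works for $\spec^s_T$, where the $T$-dependence enters as a supremum of a continuous function over $[0,T]$. It does not apply as stated to $\spnc^s_T$, where $\norm{P_kf}{\spnb^k_T}$ is an \emph{infimum} over extensions off $[-T,T]$; monotonicity is immediate, but continuity requires showing that a near-optimal extension off $[-T,T]$ can be corrected to a near-optimal extension off $[-T',T']$ at a cost that vanishes as $T'\to T$ (using that $f$ is continuous, the correction is supported on a set of measure $\to 0$, and the resolvent weight $(\tau-\omega(\xi)+i2^{\vari k})^{-1}$ damps the result). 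This two-sided continuity of the infimum is precisely what Proposition~8.1 in \cite{GO2018} establishes and is not a consequence of strong continuity and dominated convergence alone. Your argument for the $T\to 0$ limit of the $\spnc$ part is the right idea (time-support $T^{1/2}$ gain combined with the $2^{-\vari k/2}$ gain from the resolvent, summable against the rapid decay of $P_kf$), and in fact the same quantitative mechanism is what one needs to spell out to close the continuity gap; you should make that connection explicit rather than invoking dominated convergence.
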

In particular, we conclude from the proposition that every smooth solution $u$ of \eqref{eq_PDE} with mean zero is contained in $\spfc^s_T$ and in $\spec^s_T$, whereas the nonlinearity $\dx(u^2)$ is an element of $\spnc^s_T$.
\section{Multilinear estimates}\label{s_multi}

In this section we derive the estimate of the nonlinearity for the bootstrap argument as well as a quadrilinear estimate that is used repetitively to obtain the energy estimates in the next section. As we will see, we need to exploit the frequency-dependent time-localized structure of the function spaces $\spfc_T^s$ and $\spnc_T^s$ for both estimates.\\

Before we start with the estimate of the nonlinearity, let us provide two auxiliary lemmata. The first one deals with convolution estimates for functions localized both in modulation and frequency.

\begin{lem}\label{lem_con}
	For each $i\in[4]$, choose $k_i$, $l_i\in\bbn$ and $f_i\in\lebL^2(\bbr;\lebl^2(\bbz;\bbr_{\geq 0}))$ such that
	\begin{equation}\label{def_supp}
		\supp f_i
		\subset
		D_{l_i,k_i}
		:=
		\{(\tau,\xi)\in \bbr\times\bbz : 	\tau-\omega(\xi)\in\supp{\eta_{l_i}}, \xi\in\supp{\chi_{k_i}}\}.
	\end{equation}
	Then, we have
	\begin{align}
		\overset{3}{\underset{i=1}{\bigast}} f_i(0,0)
		&\lesssim
		2^{l_3^*/2}2^{k_3^*/2}\prod_{i=1}^3\norm{f_i}{\lebL^2_\tau\lebl^2_\xi},\label{case_tri_generic}
		\\
		\overset{4}{\underset{i=1}{\bigast}} f_i(0,0)
		&\lesssim
		2^{(l_3^*+l_4^*)/2}2^{(k_3^*+k_4^*)/2}\prod_{i=1}^4\norm{f_i}{\lebL^2_\tau\lebl^2_\xi}.\label{case_quad_generic}
	\end{align}
\end{lem}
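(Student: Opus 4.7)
The plan is to reduce both estimates to the single bilinear convolution bound
\begin{equation*}
	\norm{f * g}{\lebL^2_\tau \lebl^2_\xi}
	\lesssim
	2^{\min(l,l')/2}\,2^{\min(k,k')/2}\,\norm{f}{\lebL^2_\tau\lebl^2_\xi}\,\norm{g}{\lebL^2_\tau\lebl^2_\xi},
\end{equation*}
valid whenever $f$ is supported in $D_{l,k}$ and $g$ in $D_{l',k'}$. I would establish it by a pointwise Cauchy--Schwarz on $(f*g)(\tau,\xi) = \sum_{\xi_1}\int f(\tau_1,\xi_1)\, g(\tau-\tau_1, \xi-\xi_1)\,d\tau_1$: the summand/integrand vanishes unless $\xi_1$ belongs to $\supp{\chi_k}\cap(\xi-\supp{\chi_{k'}})$, which has at most $\sim \min(2^k,2^{k'})$ elements, and, for each such $\xi_1$, unless $\tau_1$ lies in the intersection of two intervals of lengths $\sim 2^l$ and $\sim 2^{l'}$. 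This yields the pointwise bound $|(f*g)(\tau,\xi)|^2 \lesssim \min(2^l,2^{l'})\min(2^k,2^{k'})\cdot (\abs{f}^2 * \abs{g}^2)(\tau,\xi)$; integrating in $(\tau,\xi)$ and using $\norm{\abs{f}^2 * \abs{g}^2}{\lebL^1_\tau\lebl^1_\xi} = \norm{f}{\lebL^2_\tau\lebl^2_\xi}^2 \norm{g}{\lebL^2_\tau\lebl^2_\xi}^2$ closes the proof.

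For \eqref{case_tri_generic}, I apply Cauchy--Schwarz after pairing two of the three functions,
\begin{equation*}
	\overset{3}{\underset{i=1}{\bigast}} f_i(0,0)
	\leq
	\norm{f_{i_1}*f_{i_2}}{\lebL^2_\tau\lebl^2_\xi}\,\norm{f_{i_3}}{\lebL^2_\tau\lebl^2_\xi},
\end{equation*}
and choose the pair so that the bilinear bound produces exactly the factor $2^{l_3^*/2}2^{k_3^*/2}$. Concretely, let $j_l,j_k\in\{1,2,3\}$ be indices realizing $l_{j_l}=l_3^*$ and $k_{j_k}=k_3^*$, and pick $\{i_1,i_2\}$ to contain both $j_l$ and $j_k$ (if they coincide, I complete the pair with any other index). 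Then $\min(l_{i_1},l_{i_2})=l_3^*$ and $\min(k_{i_1},k_{i_2})=k_3^*$, and the bilinear bound delivers \eqref{case_tri_generic}.

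For \eqref{case_quad_generic}, I iterate the argument,
\begin{equation*}
	\overset{4}{\underset{i=1}{\bigast}} f_i(0,0)
	\leq
	\norm{f_{i_1}*f_{i_2}}{\lebL^2_\tau\lebl^2_\xi}\,\norm{f_{i_3}*f_{i_4}}{\lebL^2_\tau\lebl^2_\xi},
\end{equation*}
and apply the bilinear bound to each factor. The crux is to choose the partition $\{i_1,i_2\}\sqcup\{i_3,i_4\}=[4]$ so that the two indices $J_l\subset[4]$ carrying the smallest modulation scales and the two indices $J_k\subset[4]$ carrying the smallest frequency scales are \emph{both} split between the two pairs; a short case analysis on $|J_l\cap J_k|\in\{0,1,2\}$ shows this is always possible. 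Under such a splitting, $\min(l_{i_1},l_{i_2})+\min(l_{i_3},l_{i_4})=l_3^*+l_4^*$ and likewise for the $k$'s, yielding \eqref{case_quad_generic}. The only genuine obstacle is this combinatorial pairing selection; the remaining steps are routine Cauchy--Schwarz and support counting.
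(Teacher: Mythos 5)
Your proof is correct and follows the same route the paper indicates (Young's and Hölder's inequalities combined with the support constraint \eqref{def_supp}): the bilinear $L^2$ convolution bound is exactly the Cauchy--Schwarz/Young combination the authors have in mind, and your pairing argument --- including the observation in the quadrilinear case that the two smallest modulation indices and the two smallest frequency indices can always be split across the two pairs --- is a correct way to iterate it. The paper states this lemma with only a one-sentence justification, so your write-up supplies the details consistently with that hint.
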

\begin{proof}
	Both estimates follow directly from Young's and Hölder's inequality in combination with property \eqref{def_supp}.
\end{proof}

Next, we collect some simple observations for later reference.
\begin{lem}\label{lem_gam_eta}
	Choose $\gamma$ with $1_{[-1/4,1/4]}\leq \gamma \leq 1_{[-1,1]}$ and recall that $1_{[-5/4,5/4]}\leq \eta_0\leq 1_{[-8/5,8/5]}$ (see Section \ref{s_preliminaries}). For $\beta\geq0$, $\vari>0$, set $\varz:=\beta\vari$. Then, we have
	\begin{align}
		k+\beta\geq k'
		&\Longrightarrow
		\forall t,m\in\bbr:
		\gamma(2^{k\vari+\varz}t+m)
		=
		\gamma(2^{k\vari+\varz}t+m)\eta_0(2^{k'\vari}t+m)\label{eq_gam_eta_2},
		\\
		k\geq k'
		&\Longrightarrow
		\abs{\{m\in\bbz:\gamma(2^{k\vari+\varz}t+m)\eta_0(2^{k'\vari}t)\neq 0\}}\leq 2^{2+\varz+\vari(k-k')}\label{eq_gam_eta_3}.
	\end{align}
\end{lem}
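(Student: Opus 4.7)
The plan is to deduce both claims directly from the support and plateau hypotheses on $\gamma$ and $\eta_0$. The common theme is that under the condition $k+\beta \geq k'$ the cutoff $\gamma$ lives at the finer scale $2^{-k\vari-\varz}$ compared to $\eta_0$'s scale $2^{-k'\vari}$, so that the support of the former is forced inside the plateau of the latter. I will convert each non-vanishing condition into a size bound on its argument and then relate the two arguments by an elementary algebraic identity.

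For (\ref{eq_gam_eta_2}), note that $\gamma(\cdot) = \gamma(\cdot)\eta_0(\cdot)$ holds as soon as $\eta_0$ equals $1$ on the support of $\gamma$. I therefore fix $(t,m)$ with $\gamma(2^{k\vari+\varz}t+m)\neq 0$, which by $\gamma\leq 1_{[-1,1]}$ forces $|2^{k\vari+\varz}t+m|\leq 1$. Then I factor the other argument as
\[
    2^{k'\vari}t = 2^{(k'-k)\vari-\varz}\bigl(2^{k\vari+\varz}t\bigr),
\]
and invoke the hypothesis, which gives $2^{(k'-k)\vari-\varz}\leq 1$, in order to control $|2^{k'\vari}t+m|$ by $5/4$ via the triangle inequality. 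The lower bound $1_{[-5/4,5/4]}\leq\eta_0$ then yields $\eta_0(2^{k'\vari}t+m)=1$ as required.

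For (\ref{eq_gam_eta_3}), I fix $t$ and first observe that if $\eta_0(2^{k'\vari}t)=0$ the set in question is empty, so one may assume $|2^{k'\vari}t|\leq 8/5$, which yields $|2^{k\vari+\varz}t|\leq(8/5)\,2^{(k-k')\vari+\varz}$. The non-vanishing of $\gamma(2^{k\vari+\varz}t+m)$ restricts $m$ to the translate $[-1-2^{k\vari+\varz}t,\,1-2^{k\vari+\varz}t]$. As $t$ ranges over the $\eta_0$-support, these $m$-intervals sweep out the interval $[-1-(8/5)2^{(k-k')\vari+\varz},\,1+(8/5)2^{(k-k')\vari+\varz}]$, and a direct count of integers in a set of this length is bounded by $2^{2+\varz+\vari(k-k')}$ once $k\geq k'$.

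There is no genuine obstacle here; both statements reduce to bookkeeping with the explicit constants $5/4$ and $8/5$ appearing in the definitions of $\gamma$ and $\eta_0$. The only subtlety is to carry these constants through the algebraic manipulation relating the arguments of $\gamma$ and $\eta_0$ so that the sharp bounds claimed in (\ref{eq_gam_eta_2}) and (\ref{eq_gam_eta_3}) emerge, rather than weaker versions with larger absolute constants.
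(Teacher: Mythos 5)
The step in your proof of \eqref{eq_gam_eta_2} that ``control[s] $|2^{k'\vari}t+m|$ by $5/4$ via the triangle inequality'' does not actually go through, and in fact the identity as literally stated fails for large $|m|$. Writing $\lambda := 2^{(k'-k)\vari-\varz} = 2^{\vari(k'-k-\beta)} \in (0,1]$ and using your factorisation, one has
\begin{equation*}
	2^{k'\vari}t + m
	= \lambda\bigl(2^{k\vari+\varz}t\bigr) + m
	= \lambda\bigl(2^{k\vari+\varz}t + m\bigr) + (1-\lambda)m,
\end{equation*}
so under $|2^{k\vari+\varz}t+m|\le 1$ the best one can say is $|2^{k'\vari}t+m|\le \lambda + (1-\lambda)|m|$. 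When $\lambda<1$ (i.e.\ $k+\beta>k'$) this is unbounded in $m$, so the argument of $\eta_0$ need not lie in $[-5/4,5/4]$. A concrete instance: with $\vari=\beta=1$, $k=k'=0$, $t=-5$, $m=10$ one gets $\gamma(2t+m)=\gamma(0)\ne 0$ but $\eta_0(t+m)=\eta_0(5)=0$. The support argument only becomes valid if the shift in $\eta_0$'s argument is scaled the same way as $t$ is, i.e.\ if one inserts $\eta_0\bigl(\lambda(2^{k\vari+\varz}t+m)\bigr) = \eta_0\bigl(2^{k'\vari}t + 2^{\vari(k'-k-\beta)}m\bigr)$: then $|2^{k\vari+\varz}t+m|\le 1$ directly forces the $\eta_0$-argument to have modulus at most $\lambda\le 1 < 5/4$, and this is what the subsequent argument in Section~\ref{ss_estimate_nonlinearity} actually needs, since the shift is anyway absorbed when taking $\sup_{t_{k'}}$ in the definition of $\spfa^{k'}$. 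Your proof as written has a genuine gap at precisely this point and needs the rescaled shift to close it.

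Concerning \eqref{eq_gam_eta_3}: your counting argument silently switches from the literal reading (with $t$ fixed, the set of $m$ lies in an interval of length $2$ and hence has at most $3$ elements, so the bound is trivial) to the reading ``there exists $t$ with $\gamma(\cdot)\eta_0(\cdot)\ne 0$,'' which is the version actually invoked in the proof of Lemma~\ref{lem_est_spn}. That is the right thing to count, but the number of integers you obtain, $3 + (16/5)\,2^{\vari(k-k')+\varz}$, exceeds the stated bound $2^{2+\varz+\vari(k-k')} = 4\cdot 2^{\vari(k-k')+\varz}$ whenever $2^{\vari(k-k')+\varz} < 15/4$. You acknowledge the need to ``carry these constants through'' but do not do so, and in fact the stated sharp constant is not recovered by your count; what one gets (and what the paper actually uses, up to unimportant constants) is a bound $\lesssim 2^{\vari(k-k')+\varz}$.
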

\subsection{Estimate of the nonlinearity}\label{ss_estimate_nonlinearity}

We continue with proving the nonlinear estimate for the bootstrap argument. More precisely, we need to bound the $\spnc^r_T$-norm of $\smash{\dx(u^2)}$ as well as the $\smash{\spnc^{-1/2}_T}$- and $\smash{\spnc^{s}_T}$-norm of $\dx(vw)$, where $u$, $u_1$ and $u_2$ are solutions of \eqref{eq_PDE} and $v=u_1-u_2$, $w=u_1+u_2$.
The proof makes use of the structure of the spaces $\spnc^\bullet_T$ and $\spfc^\bullet_T$, the strength of the time-localization $\vari$ as well as of the estimate \eqref{case_tri_generic}. Notably, as \eqref{case_tri_generic} is independent of $\alpha$, the obtained bound holds independently of the precise choice of $\alpha\in(0,1)$ and for arbitrary, but sufficiently regular functions.

\begin{lem}\label{lem_est_spn}
	Let $\vari>1$, $\alpha\in(0,1)$ and $T\in(0,1]$. For all $b\geq-1/2$ and $c>1/2$ there exists $d>0$ such that for all $f\in\spfc^b_T, g\in\spfc^c_T$ we have
	\begin{equation}\label{eq_est_spn}
		\norm{\dx (fg)}{\spnc^b_T}
		\lesssim
		T^d\norm{f}{\spfc^b_T}\norm{g}{\spfc^c_T}.
	\end{equation}
\end{lem}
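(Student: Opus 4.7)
The plan is to carry out a short-time Fourier restriction analysis along the lines of \cite{IKT2008, GO2018}. First, via Proposition \ref{lem_ext_spf_spn}, I will extend $f,g$ to functions in $\spfa^\bullet$ supported in $[-2T,2T]$ with comparable norms. Writing $f=\sum_{k_1} f_{k_1}$ and $g=\sum_{k_2} g_{k_2}$ with $f_{k_i}=P_{k_i}f$, etc., it will suffice, by the $\ell^2$-structure of $\spnc^b_T$, to prove for each dyadic $k$ an estimate of the schematic form
\begin{align*}
	\norm{P_k\dx(f_{k_1}g_{k_2})}{\spnb^k_T}
	\lesssim
	T^d\,\mu(k,k_1,k_2)\,\norm{f_{k_1}}{\spfb^{k_1}_T}\norm{g_{k_2}}{\spfb^{k_2}_T},
\end{align*}
with a kernel $\mu$ whose off-diagonal decay, after weighting by $2^{kb}2^{-k_1 b}2^{-k_2 c}$, makes the triple sum $\ell^2_k$-convergent. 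Frequency support forces one of two cases: \emph{high-low} ($k\sim k_1\vee k_2$, the other frequency $\leq k+O(1)$) or \emph{high-high} ($k_1\sim k_2\gtrsim k$).

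\textbf{Main step: localization and trilinear bound.} Fix $k$ and an arbitrary $t_k$, and set $N=k\vee k_1\vee k_2$. Using the partition of unity $\sum_m\gamma(2^{\vari N}(t-t_k)-m)$ on the interval $\{\eta_0(2^{\vari k}(\cdot-t_k))\neq 0\}$, legal by \eqref{eq_gam_eta_2}, I will decompose $P_k\dx(f_{k_1}g_{k_2})\eta_0(2^{\vari k}(\cdot-t_k))$ into $\lesssim 2^{\vari(N-k)}$ summands (by \eqref{eq_gam_eta_3}), each a product of factors individually localized in time at the finest scale $2^{-\vari N}$. Estimate \eqref{eq_est_xk_loc} ensures that each truncated factor retains its $\spx^{k_i}$-norm. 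Passing to the space-time Fourier side turns the product into a convolution; the weight $(\tau-\omega(\xi)+i2^{\vari k})^{-1}$ contributes a modulation gain $2^{-\max(l,\vari k)}$; and I will apply the trilinear bound \eqref{case_tri_generic} after dualizing against a unit element of $\spx^k$. Finally, \eqref{eq_est_l2txk} with an exponent $b_0<1/2$ transfers each short-time $\spx^{k_i}$-norm back to its $\spfa^{k_i}$-norm at the cost of $T^{1/2-b_0}$ per factor but a loss $2^{(1/2-b_0)\vari N}$; the two factors combine to produce $T^{1-2b_0}$.

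\textbf{Balancing and the main obstacle.} Collecting the derivative $|\xi|\sim 2^k$, the convolution gain $2^{k_3^*/2}$ from \eqref{case_tri_generic}, the sum over $m$ of size $\lesssim 2^{\vari(N-k)}$, the modulation weight $2^{-\vari k}$, and the restriction factor $T^{1-2b_0}2^{(1-2b_0)\vari N}$, one arrives schematically at
\begin{align*}
	\mu(k,k_1,k_2)
	\sim
	T^{1-2b_0}\,2^{k+k_3^*/2}\,2^{-\vari k}\,2^{\vari(N-k)}\,2^{(1-2b_0)\vari N}.
\end{align*}
Because $\vari>1$, the net exponent on $2^N$ is strictly negative once $b_0$ is chosen close enough to $1/2$, leaving a true positive power $T^d$. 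In the high-low case $k_3^*=k_1\wedge k_2$: the factor $2^{k_3^*/2}$ combined with the regularity weight $2^{-k c}$ is summable precisely because $c>1/2$, while $b\geq -1/2$ plays no active role on the high side. In the high-high case $k_3^*=k$, and the off-diagonal decay summing $k_1\sim k_2=:K\geq k$ comes from the derivative $2^k$ (rather than $2^K$) landing on the low-frequency output, yielding a power $2^{-(K-k)\min(b+1/2,\,c-1/2)}$. I expect this high-high-to-low resonance at the threshold $b=-1/2$ to be the main obstacle, as it is where the $\ell^2_k$-summability is tightest and where the two conditions $\vari>1$ and $c>1/2$ are used simultaneously: the former to manufacture $T^d$, the latter to close the dyadic sum over $K$.
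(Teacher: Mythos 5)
Your overall strategy is aligned with the paper's: extend via Proposition~\ref{lem_ext_spf_spn}, insert a partition of unity $\gamma(2^{\vari N}(t-t_k)-m)$ controlled by Lemma~\ref{lem_gam_eta}, localize in modulation, apply the trilinear convolution estimate \eqref{case_tri_generic}, and recover $T^d$ from \eqref{eq_est_l2txk}. However, your schematic bookkeeping contains concrete errors, and the most delicate regime -- the high-high-to-low case $k_1\sim k_2\gg k_3$ with $c$ close to $1/2$ -- does not close with the ledger you propose.

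Two bookkeeping issues first. The modulation weight $(\tau-\omega(\xi)+i2^{\vari k})^{-1}$ does give a pointwise gain $2^{-\max(l_3,\vari k)}$ on each dyadic modulation block, but the $\spx^{k}$-norm also carries the weight $2^{l_3/2}$; after summing over $l_3\geq\lfloor\vari k\rfloor$ the \emph{net} gain is $2^{-\vari k/2}$, not the $2^{-\vari k}$ appearing in your $\mu$. Secondly, you cannot harvest $T^{1/2-b_0}$ from \emph{both} input factors. When \eqref{case_tri_generic} is applied, the factor $2^{l_3^*/2}$ must be attached to one of the $l_i$-sums, whichever is free of the modulation weight picks up the $T$-gain through Hölder and \eqref{eq_est_l2txk}; the other sum already carries the weight $2^{l_i/2}$ and applying \eqref{eq_est_l2txk} there would leave a divergent series $\sum_{l_i}2^{l_i(1/2-b_0)}$. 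So the true factor is $T^d$, not $T^{1-2b_0}$. In the high-low case these two errors happen to partially compensate and you would still get a usable exponent, which is why that case looks plausible.

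The genuine gap is in the high-high case. Tracing your $\mu$ honestly (with $k=k_3$, $k_3^*=k_3$, $N=K:=k_1\sim k_2$ and your corrected accounting) one gets something like
\begin{equation*}
\mu(k_3,K,K)
\sim
T^{d}\,2^{k_3(3/2-3\vari/2)}\,2^{\vari K(1/2+d)},
\end{equation*}
and after inserting the regularity weights $2^{k_3 b}2^{-Kb}2^{-Kc}$ (and using $b+1/2\geq0$ to shift $2^{k_3(b+1/2)}\leq2^{K(b+1/2)}$) the $K$-sum requires $\vari(1/2+d)+1/2<c$, which fails for all $c\in(1/2,1+\vari/2]$; in particular for any $c$ close to $1/2$. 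Your asserted off-diagonal decay $2^{-(K-k)\min(b+1/2,\,c-1/2)}$ is what one wants, but it is not what your $\mu$ produces -- the naive application of \eqref{case_tri_generic} yields a \emph{growing} factor $2^{\vari K/2}$ at the high frequency, not a decaying one. The paper removes this loss by interpolating the trilinear modulation factor via $l_3^*\leq l_3(1/2-\epsilon)+l_1\epsilon$, which, when played against the $2^{-l_3/2}$ from the modulation weight and the Hölder gain on the $l_1$-sum, replaces $2^{\vari K(1/2+d)}$ by the much smaller $2^{\vari K(\epsilon+d)}$. Only with this extra $2^{-\vari K/2}$ savings does the high-high sum close under the stated hypotheses $b\geq-1/2$, $c>1/2$, $\vari>1$. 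You do correctly flag $b=-1/2$, $c\downarrow1/2$ as the threshold, but the mechanism you invoke ($\vari>1$ for $T^d$ and $c>1/2$ to sum) is not sufficient; the modulation interpolation is the missing ingredient.
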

\begin{proof}
	According to Proposition \ref{lem_ext_spf_spn}, there exist extensions $f_{k_1}$ and $g_{k_2}$ of $P_{k_1}f$ and $P_{k_2}g$ satisfying
	\begin{align}\label{p_est_spn_ext}
		\begin{split}
			\norm{f_{k_1}}{\spfa^{k_1}}
			&\lesssim
			\norm{P_{k_1}f}{\spfb^{k_1}_T},
			\,\,
			\supp[t]{f_{k_1}} \subset [-2T,2T],
			\\
			\norm{g_{k_2}}{\spfa^{k_2}}
			&\lesssim
			\norm{P_{k_2}g}{\spfb^{k_2}_T},
			\,\,
			\supp[t]{g_{k_2}}\subset [-2T,2T].
		\end{split}
	\end{align}
	Here, the implicit constants are independent of $k_1$, $k_2$, $f$ and $g$.
	Since $f_{k_1}g_{k_2}$ is an extension of $P_{k_1}fP_{k_2}g$, the definition of $\spnc^b_T$ yields
	\begin{align}\label{p_est_spn_red}
		\norm{\dx (fg)}{\spnc^b_T}^2
		\leq
		\sum_{k_3}\left[2^{k_3b}\sum_{k_1,k_2}\norm{P_{k_3}\dx\left(f_{k_1}g_{k_2}\right)}{\spna^{k_3}}\right]^2.
	\end{align}
	In the following, we will restrict the right-hand side of \eqref{p_est_spn_red} to the index sets $\{k_1\sim k_3\gg k_2\}$, $\{k_2\sim k_3\gg k_1\}$, $\{k_1\sim k_2\gg k_3\}$, $\{k_1\sim k_3\sim k_2\}$ and $\{k_1, k_2, k_3 \leq 100\}$ and then estimate each contribution separately. Here, we understand $k_i\sim k_j$ as $\abs{k_i-k_j}<3$ and $k_1\sim k_3\gg k_2$ as $k_1\sim k_3$, $k_3\gg k_2$ and $k_1\gg k_2$.\\
	
	\textbf{Estimating the contribution for $k_1\sim k_3\gg k_2$.}
	We choose $\gamma\in C^\infty(\bbr)$ with ${1_{[-1/4,1/4]}\leq\gamma\leq1_{[-1,1]}}$ and ${\sum_{m\in\bbz}\gamma^2(\cdot+m)=1}$. Also, we abbreviate $mod_r(\tau,\xi):=(\tau-\omega(\xi)+i2^{\vari r})^{-1}$ and set $\varz =3\vari$. Then, the definition of $\spna^{k_3}$ implies
	\begin{align*}
		\norm{P_{k_3}\dx\left(f_{k_1}g_{k_2}\right)}{\spna^{k_3}}
		&=
		\sup_{t_{k_3}}\norm{mod_{k_3}(\tau,\xi)\fttx{\eta_0(2^{\vari k_3}(t-t_{k_3}))\dx\left(f_{k_1}g_{k_2}\right)}}{\spx^{k_3}}
		\\&\leq
		2^{k_3}\sup_{t_{k_3}}\sum_{m\in\bbz}\norm{mod_{k_3}(\tau,\xi)\fttx{\left(\gamma^2(2^{\varz}\cdot-m)\eta_0(\cdot)\right)(2^{\vari k_3}(t-t_{k_3}))f_{k_1}g_{k_2}}}{\spx^{k_3}}.
	\end{align*}
	Note that $k_3+3\geq k_1, k_2$ holds. Hence by \eqref{eq_gam_eta_2}, we obtain
	\begin{align*}
		\gamma^2(2^{k_3\vari+\varz}t-m)=\gamma^2(2^{k_3\vari+\varz}t-m)\eta_0(2^{k_1\vari}t-m)\eta_0(2^{k_2\vari}t-m).
	\end{align*}
	With
	\begin{align*}
		h_{k_1}
		&:=
		\fttx{\gamma(2^{k_3\vari+\varz}(t-t_{k_3})-m)\eta_0(2^{k_1\vari}(t-t_{k_3})-m)f_{k_1}(t,x)},
		\\
		h_{k_2}
		&:=
		\fttx{\gamma(2^{k_3\vari+\varz}(t-t_{k_3})-m)\eta_0(2^{k_3\vari}(t-t_{k_3}))\eta_0(2^{k_2\vari}(t-t_{k_3})-m)g_{k_2}(t,x)},
	\end{align*}
	we can write
	\begin{align*}
		\norm{P_{k_3}\dx\left(f_{k_1}g_{k_2}\right)}{\spna^{k_3}}
		\lesssim
		2^{k_3}\sup_{t_{k_3}}\sum_{m\in\bbz}\norm{mod_{k_3}(\tau,\xi)(h_{k_1}\ast h_{k_2})}{\spx^{k_3}}.
	\end{align*}
	Observe that \eqref{eq_gam_eta_3} implies that the $\spx^{k_3}$-norm above vanishes for all but at most $2^{2+6\vari}$ many values of $m$. By the definition of $\spx^{k_3}$, we get
	\begin{align*}
		&2^{k_3}\sup_{t_{k_3}}\sum_{m\in\bbz}\norm{mod_{k_3}(\tau,\xi)\left(h_{k_1}\ast h_{k_2}\right)}{\spx^{k_3}}
		\\
		&\lesssim
		2^{k_3}\sup_{t_{k_3},m}\left(\sum_{l_3\leq\lfloor k_3\vari\rfloor }+\sum_{l_3> \lfloor k_3\vari\rfloor}\right)2^{l_3/2}\norm{\chi_{k_3}(\xi)\eta_{l_3}(\tau-\omega(\xi))mod_{k_3}(\tau,\xi)\left(h_{k_1}\ast h_{k_2}\right)}{\lebL^2_\tau\lebl^2_\xi}
		\\
		&\lesssim
		2^{k_3}\sup_{t_{k_3},m}2^{-\lfloor\vari k_3\rfloor/2}\norm{1_{D_{\leq \lfloor k_3\vari\rfloor,k_3}}\left(h_{k_1}\ast h_{k_2}\right)}{\lebL^2_\tau\lebl^2_\xi}
		+
		\sum_{l_3> \lfloor k_3\vari\rfloor}2^{-l_3/2}\norm{1_{D_{l_3,k_3}}\left(h_{k_1}\ast h_{k_2}\right)}{\lebL^2_\tau\lebl^2_\xi}.
	\end{align*}
	Above, we used the notation introduced in \eqref{def_supp} and write $D_{\leq \lfloor k_3\vari\rfloor,k_3}:=\cup_{l \leq \lfloor k_3\vari\rfloor} D_{l,k_3}$. Moreover, in the last inequality, we used 
	\begin{align*}
		\abs{\eta_{l_3}(\tau-\omega(\xi))mod_{k_3}(\tau,\xi)}\sim\abs{\eta_{l_3}(\tau-\omega(\xi))(2^{l_3}+i2^{\vari k_3})^{-1}},
	\end{align*}
	which yields the estimate
	\begin{align*}
		\abs{\eta_{l_3}(\tau-\omega(\xi))mod_{k_3}(\tau,\xi)}\lesssim 2^{-\lfloor\vari k_3\rfloor}\abs{\eta_{\leq \lfloor\vari k_3\rfloor}(\tau-\omega(\xi))}
	\end{align*}
	for $l_3\leq\lfloor\vari k_3\rfloor$. Next, we localize each $h_{k_i}$ in modulation. For $j\in[2]$ define
	\begin{align*}
		h_{l_j,k_j}(\tau,\xi):=
		\begin{cases}
			\eta_{l_j}(\tau-\omega(\xi))h_{k_j}(\tau,\xi)
			&\text{if }l_j>\lfloor k_3\vari+\varz\rfloor,
			\\
			\eta_{\leq \lfloor k_3\vari+\varz\rfloor}(\tau-\omega(\xi))h_{k_j}(\tau,\xi)
			&\text{if }l_j=\lfloor k_3\vari+\varz\rfloor.
		\end{cases}
	\end{align*}
	Now, the functions -- as well as their convolution -- are localized in modulation and frequency. Assume $l_3^*\leq l_2$ and $k_3^*=k_2$. We continue the estimation above by an application of \eqref{case_tri_generic} leading to
	\begin{align*}
		&\lesssim
		2^{k_3}\sup_{t_{k_3},m}\sum_{\substack{l_3\geq \lfloor k_3\vari\rfloor\\ l_1,l_2\geq \lfloor k_3\vari+\varz\rfloor}}2^{-l_3/2}\norm{1_{D_{l_3,k_3}}\left(h_{l_1,k_1}\ast h_{l_2,k_2}\right)}{\lebL^2_\tau\lebl^2_\xi}
		\\&\lesssim
		2^{k_2/2}2^{k_3}\sup_{t_{k_3},m}\sum_{l_3\geq \lfloor k_3\vari\rfloor}2^{-l_3/2}\left[\sum_{l_1\geq \lfloor k_3\vari+\varz\rfloor}\norm{h_{l_1,k_1}}{\lebL^2_\tau\lebl^2_\xi}\right]\left[\sum_{l_2\geq \lfloor k_3\vari+\varz\rfloor}2^{l_2/2}\norm{h_{l_2,k_2}}{\lebL^2_\tau\lebl^2_\xi}\right].
	\end{align*}
	Fix $d\in[0,1/2)$. We estimate the sum over $l_1$ by combining Hölder's inequality with estimate \eqref{eq_est_l2txk}, which is applicable due to $f_{k_1}=1_{[-2T,2T]}f_{k_1}$. Thus, we obtain
	\begin{align*}
		\sum_{l_1\geq \lfloor k_3\vari+\varz\rfloor}\norm{h_{l_1,k_1}}{\lebL^2_\tau\lebl^2_\xi}
		\lesssim
		2^{k_3\vari (d-1/2)}
		\sup_{l_1\geq \lfloor k_3\vari+\varz\rfloor}2^{l_1(1/2-d)}\norm{h_{l_1,k_1}}{\lebL^2_\tau\lebl^2_\xi}
		\lesssim
		T^d2^{k_3\vari(d-1/2)}
		\norm{h_{k_1}}{\spx^{k_1}}.
	\end{align*}
	Using \eqref{eq_est_mod_l2xk}, the sum over $l_2$ can be estimated by
	\begin{align*}
		\sum_{l_2\geq \lfloor k_3\vari+\varz\rfloor}2^{l_2/2}\norm{h_{l_2,k_2}}{\lebL^2_\tau\lebl^2_\xi}
		\lesssim
		\norm{\mathcal{F}_{t,x}\left[\gamma(2^{k_3\vari+\varz}(t-t_{k_3})-m)\eta_0(2^{k_2\vari}(t-t_{k_3})-m)g_{k_2}\right]}{\spx^{k_2}}.
	\end{align*}
	Now, we apply \eqref{eq_est_xk_loc} multiple times, loosing the additional time-localizations $\gamma$. Then, we take the supremum in $t_{k_3}$ for each factor (loosing the dependence on $m$ and $k_3$), evaluate the sum over $l_3$ and apply the bounds in \eqref{p_est_spn_ext}. It follows
	\begin{align*}
		\norm{P_{k_3}\dx\left(f_{k_1}g_{k_2}\right)}{\spna^{k_3}}
		\lesssim
		T^d2^{k_2/2}2^{k_3(1-\vari(1-d))}
		\norm{P_{k_1}f}{\spfb^{k_1}_T}
		\norm{P_{k_2}g}{\spfb^{k_2}_T}.
	\end{align*}
	For $\vari>1$ we can find $\delta, \epsilon\in(0,1/2)$ such that $1<\vari(1-\delta)-\epsilon$ holds. Then, we obtain
	\begin{align*}
		\text{RHS}\eqref{p_est_spn_red}|_{k_1\sim k_3\gg k_2}
		&\lesssim
		T^{2d}\sum_{k_3}2^{2k_3(1+b-\vari(1-\delta))}\left[\sum_{k_1}\sum_{k_2}2^{k_2/2}\norm{P_{k_1}f}{\spfb^{k_1}_T}\norm{P_{k_2}g}{\spfb^{k_2}_T}\right]^2
		\\&\lesssim
		T^{2d}\norm{g}{\spfc^{1/2}_T}^2\sum_{k_3}2^{2k_3(1+b-\vari(1-\delta)+\epsilon)}\norm{P_{k_1}f}{\spfb^{k_1}_T}^2
		\lesssim
		T^{2d}\norm{f}{\spfc^b_T}^2\norm{g}{\spfc^{1/2}_T}^2.
	\end{align*}
	Hence, $\eqref{eq_est_spn}$ follows for $d\in(0,1-1/\vari)$, whenever $k_1\sim k_3\gg k_2$ holds.\\
	
	\textbf{Estimating the contribution for $k_2\sim k_3\gg k_1$.}
	We may repeat all steps of the first case. Note that the application of \eqref{case_tri_generic} will yield the factor $2^{k_1/2}$ instead of $2^{k_2/2}$. For $b<1/2$, we obtain
	\begin{align*}
		\text{RHS}\eqref{p_est_spn_red}|_{k_2\sim k_3\gg k_1}
		&\lesssim
		T^{2d}\sum_{k_3}2^{2k_3(1+b-\vari(1-d))}\left[\sum_{k_1}\sum_{k_2}2^{k_1/2}\norm{P_{k_1}f}{\spfb^{k_1}_T}\norm{P_{k_2}g}{\spfb^{k_2}_T}\right]^2
		\\&\lesssim
		T^{2d}\norm{f}{\spfc^b_T}^2\sum_{k_3}2^{2k_3(1+b-\vari(1-d)+\epsilon+1/2-b)}\norm{P_{k_2}g}{\spfb^{k_2}_T}^2
		\lesssim
		T^{2d}\norm{f}{\spfc^b_T}^2\norm{g}{\spfc^{1/2}_T}^2.
	\end{align*}
	Similarly, the estimate follows for $b\geq 1/2$.\\
	
	\textbf{Estimating the contribution for $k_1\sim k_2\gg k_3$.}
	Without loss of generality, we can assume $k_1\geq k_2$. According to \eqref{eq_gam_eta_2}, we have
	\begin{align*}
		\gamma^2(2^{k_1\vari}t-m)=\gamma^2(2^{k_1\vari}t-m)\eta_0(2^{k_1\vari}t-m)\eta_0(2^{k_2\vari}t-m)
	\end{align*}
	and the right-hand side does not vanish for at most $2^{(k_1-k_3)\vari}$ many values of $m$. Proceeding as in the first case, we obtain the estimate
	\begin{align*}
		\norm{P_{k_3}\dx\left(f_{k_1}g_{k_2}\right)}{\spna^{k_3}}
		\lesssim
		2^{k_1\vari+k_3(1-\vari)}\sup_{t_{k_3},m}\sum_{\substack{l_3\geq \lfloor k_3\vari\rfloor\\ l_1,l_2\geq \lfloor k_1\vari\rfloor}}2^{-l_3/2}\norm{1_{D_{l_3,k_3}}(h_{l_1,k_1}\ast h_{l_2,k_2})}{\lebL^2_\tau\lebl^2_\xi}.
	\end{align*}
	Then, together with \eqref{case_tri_generic} and $l_3^*\leq l_3(1/2-\epsilon)+l_1\epsilon$, $\epsilon>0$, it follows
	\begin{align*}
		\norm{P_{k_3}\dx\left(f_{k_1}g_{k_2}\right)}{\spna^{k_3}}
		\lesssim
		T^{d}2^{k_1\vari(\epsilon+d)}2^{k_3(3/2-\vari(1+\epsilon))}\norm{P_{k_1}f}{\spfb^{k_1}_T}\norm{P_{k_2}g}{\spfb^{k_2}_T}.
	\end{align*}
	Due to $1/2+b\geq 0$, this yields
	\begin{align*}
		\text{RHS}\eqref{p_est_spn_red}|_{k_1\sim k_2\gg k_3}
		\lesssim
		T^{2d}\sum_{k_3}2^{2k_3(1-\vari(1+\epsilon))}\left[\sum_{k_1}\sum_{k_2}2^{k_1\left(\vari(\epsilon+d)+1/2+b\right)}\norm{P_{k_1}f}{\spfb^{k_1}_T}\norm{P_{k_2}g}{\spfb^{k_2}_T}\right]^2.
	\end{align*}
	Then, choosing $\epsilon$, $d>0$ such that $1<\vari(1+\epsilon)$ and $\vari(\epsilon+d)+1/2<c$ hold, we conclude
	\begin{align*}
		\text{RHS}\eqref{p_est_spn_red}|_{k_1\sim k_2\gg k_3}
		\lesssim
		T^{2d}\norm{f}{\spfc_{T}^{b}}^2\norm{g}{\spfc_{T}^c}^2
	\end{align*}
	for all $b\geq -1/2$ and $c>1/2$.\\
	
	\textbf{Estimating the contributions for $k_1\sim k_2\sim k_3$ and $k_1, k_2, k_3\leq 100$.}
	In these cases, we can repeat the proof of the first case by choosing $\varz=6\vari$, respectively $\varz=100\vari$.\\
\end{proof}

\subsection{Quadrilinear estimate}\label{ss_quadrilinear_estimate}

In this section we will use \eqref{case_quad_generic} in order to derive a quadrilinear estimate.\\

Let $\varphi:\{(\xi_1,\xi_2,\xi_3,\xi_4)\in(\bbz-\{0\})^4:\xi_{1234}=0\}\rightarrow\bbr$. Define the multilinear convolution operator induced by $\varphi$ via
\begin{align*}
	S_{\varphi}(u_1,u_2,u_3,u_4)(T)
	=
	\int_0^T\sum_{\xi_{1234}=0} \varphi(\xi_1,\xi_2,\xi_3,\xi_4)\ftxh{u}_1(\xi_1)\ftxh{u}_2(\xi_2)\ftxh{u}_3(\xi_3)\ftxh{u}_4(\xi_4).
\end{align*}
Then, we have the following estimate:
\begin{lem}\label{lem_i4}
	Let $T\in(0,1]$. For every $d\in[0,1/2)$ we have
	\begin{align*}
		\abs{S_{\varphi}(u_1,u_2,u_3,u_4)(T)
		}
		\lesssim
		T^d\sum_{k_1,k_2,k_3,k_4}\sup_{\substack{\xi_i\in\supp\chi_{k_i}\\i\in[4]}}\abs{\varphi(\xi_1,\xi_2,\xi_3,\xi_4)}2^{k_3^*/2}2^{k_4^*/2}\prod_{j=1}^4\norm{P_{k_j}u_j}{\spfb^{k_j}_T}.
	\end{align*}
\end{lem}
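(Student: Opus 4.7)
The plan is to reduce the quadrilinear form to a spatial H\"older--Bernstein estimate combined with a trivial time integration, and to invoke the embedding $\spfa^{k_j}\hookrightarrow L^\infty_t L^2_x$. Since $T\in(0,1]$ and $d\in[0,1/2)\subset[0,1]$, we have $T\leq T^d$, so it is enough to prove the estimate with a $T$ factor in place of the claimed $T^d$.

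First, decompose $u_j=\sum_{k_j}P_{k_j}u_j$ and, for each quadruple $(k_1,k_2,k_3,k_4)$, use Proposition \ref{lem_ext_spf_spn} to pick extensions $\tilde u_{k_j}\in\spfa^{k_j}$ with temporal support in $[-2T,2T]$ and $\|\tilde u_{k_j}\|_{\spfa^{k_j}}\lesssim\|P_{k_j}u_j\|_{\spfb^{k_j}_T}$. Bounding $|\varphi|$ by its supremum $M$ over $\supp{\chi_{k_1}}\times\dots\times\supp{\chi_{k_4}}$ and pulling absolute values into the integral, the contribution of a single quadruple reduces to controlling $M\cdot\int_0^T\int_\bbt V_1V_2V_3V_4\,dx\,dt$, where $V_j(t,\cdot)$ is the function on $\bbt$ whose Fourier coefficients are the moduli of those of $P_{k_j}u_j(t,\cdot)$; in particular $\|V_j(t)\|_{L^2_x}=\|P_{k_j}u_j(t)\|_{L^2_x}$ and $V_j$ has Fourier support in $\supp{\chi_{k_j}}$.

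Second, apply H\"older's inequality in $x$ with the pairing $L^2\cdot L^2\cdot L^\infty\cdot L^\infty$, placing the $L^\infty_x$ norms on the two pieces whose frequencies equal $k_3^*$ and $k_4^*$. Bernstein gives $\|V_j\|_{L^\infty_x}\lesssim 2^{k_j/2}\|P_{k_j}u_j\|_{L^2_x}$ for those pieces, producing the factor $2^{k_3^*/2}2^{k_4^*/2}$. Then integrate trivially in time, $\int_0^T \prod_j\|P_{k_j}u_j(t)\|_{L^2_x}\,dt \leq T \prod_j \|P_{k_j}u_j\|_{L^\infty_T L^2_x}$, and close with the embedding chain
\[
\|P_{k_j}u_j\|_{L^\infty_T L^2_x} \leq \|\tilde u_{k_j}\|_{L^\infty_\bbr L^2_x} \lesssim \|\tilde u_{k_j}\|_{\spfa^{k_j}} \lesssim \|P_{k_j}u_j\|_{\spfb^{k_j}_T}.
\]
Summing over the Littlewood--Paley quadruples then yields the lemma.

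The main technical point I expect is the middle embedding $\|\tilde u_{k_j}\|_{L^\infty_t L^2_x}\lesssim\|\tilde u_{k_j}\|_{\spfa^{k_j}}$, which follows from the Bourgain-type embedding $\|f\|_{L^\infty_t L^2_x}\lesssim\|\mathcal{F}_{t,x}f\|_{\spx^{k_j}}$ applied to the $\eta_0$-localized function in the definition of $\spfa^{k_j}$: for each evaluation time $t_0$ one chooses the localization center $t_k=t_0$ so that $\eta_0(2^{\vari k_j}(t_0-t_k))=1$, and bounds the $L^2_x$ value at $t_0$ by the corresponding $\spx^{k_j}$ norm. A sharper proof more in the spirit of the section introduction would instead use Plancherel to rewrite $S_\varphi$ as a multilinear convolution with $1_{[0,T]}$ absorbed into one factor, apply modulation decomposition and \eqref{case_quad_generic} to extract $2^{(k_3^*+k_4^*)/2}$, and use \eqref{eq_est_l2txk} on the $1_{[0,T]}$-factor to extract $T^d$; implementing that route cleanly would require a careful modulation summation to avoid $T2^{\vari k_j}$ losses when relating $\|\mathcal{F}_{t,x}\tilde u_{k_j}\|_{\spx^{k_j}}$ to $\|\tilde u_{k_j}\|_{\spfa^{k_j}}$.
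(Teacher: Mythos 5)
Your proof is correct, and it takes a genuinely simpler route than the paper's. You reduce to the case $d=1$ (legitimate since $T\in(0,1]$ and $T\le T^d$ for all $d\in[0,1]$), take absolute values of the spatial Fourier coefficients, apply H\"older ($L^2\cdot L^2\cdot L^\infty\cdot L^\infty$) and Bernstein in $x$ to extract $2^{k_3^*/2}2^{k_4^*/2}$, integrate trivially in time, and close with the uniform embedding $\|P_ku\|_{L^\infty_TL^2}\lesssim\|P_ku\|_{\spfb^k_T}$, which you correctly justify by choosing the localization center $t_k=t_0$ so that the $\eta_0$-cutoff is identically $1$ at the evaluation time. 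Every constant is uniform in the dyadic indices, so the conclusion follows after bounding $\varphi$ by its supremum on each block.

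The paper's proof is genuinely different and more elaborate. Instead of reducing to a spatial estimate at each fixed time, it first passes to extensions with moduli-Fourier coefficients, then tiles the time axis by a partition of unity $\sum_m\gamma^4(2^{k_1^*\vari}\cdot-m)=1$ at scale $2^{-k_1^*\vari}$, splits into interior tiles $M_1$ and boundary tiles $M_2$, decomposes each factor in modulation, and invokes the quadrilinear convolution estimate \eqref{case_quad_generic} together with \eqref{eq_est_mod_l2xk}, \eqref{eq_est_xk_loc} and (for $M_2$) the restriction estimate \eqref{eq_est_l2txk}. The $T$ factor there arises from counting interior tiles ($|M_1|\lesssim T2^{k_1^*\vari}$), and the $T^d$ with the $d\in[0,1/2)$ restriction arises from handling the sharp cutoff $1_{[0,T]}$ on the boundary tiles via \eqref{eq_est_l2txk}. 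What that machinery buys is precisely what you anticipate at the end of your proposal: it is the template that in Section \ref{ss_apr} is upgraded by replacing \eqref{case_quad_generic} with a resonance-weighted version carrying the extra factor $(1+2^{l_3^*-\alpha k_1^*})^{1/2}$, which in turn yields the improved a priori bound. Your H\"older--Bernstein route cannot be upgraded that way, because you discard the modulation and time-tile information before applying any convolution estimate. For the lemma as stated, however, your argument is complete, correct, and quantitatively at least as strong (you get the $T^1$ power, strictly better than the claimed $T^d$ with $d<1/2$).
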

\begin{proof}
	We localize each $\hat{u}_i=\sum_j \chi_{k_j}\hat{u}_i$, $i\in[4]$, apply the triangle inequality and obtain 
	\begin{align*}
		\abs{S_{\varphi}(u_1,u_2,u_3,u_4)(T)
		}
		&\lesssim
		\sum_{k_1, k_2, k_3, k_4}\abs{\int_0^T\sum_{\xi_{1234}=0}
		\varphi(\xi_1,\xi_2,\xi_3,\xi_4)\prod_{j=1}^4\chi_{k_j}(\xi_j)\ftxh{u}_j(\xi_j)}
		\\&\lesssim
		\sum_{k_1, k_2, k_3, k_4}\sup_{\substack{\xi_i\in\supp\chi_{k_i}\\i\in[4]}}\abs{\varphi(\xi_1,\xi_2,\xi_3,\xi_4)}
		\int_0^T\sum_{\xi_{1234}=0}\,
		\prod_{j=1}^4\chi_{k_j}(\xi_j)\abs{\ftxh{u}_j(\xi_j)}.
	\end{align*}
	It remains to prove
	\begin{align*}
		S_{k_1,k_2,k_3,k_4}
		:=
		\int_0^T\sum_{\xi_{1234}=0}\,
		\prod_{j=1}^4\chi_{k_j}(\xi_j)\abs{\ftxh{u}_j(t,\xi_j)} dt
		\lesssim
		T^d2^{k_3^*/2}2^{k_4^*/2}\prod_{i=1}^4\norm{P_{k_i}u_i}{\spfb^{k_i}_T}.
	\end{align*}
	Let us denote by $\underline{u}_i$ the function determined by $\hat{\underline{u}}_i=\abs{\hat{u}_i}$. Due to the definition of $\spfb^{k_i}_T$ and Proposition \ref{lem_ext_spf_spn}, there exist for each $i\in[4]$ an extension $v_{k_i,i}\in\spfa^{k_i}$ of $1_{[0,T]}P_{k_i}\underline{u}_i$ such that
	\begin{align}\label{p_est_i4n_ext}
		\supp[t]{v_{k_i,i}}\subset [-2T,2T],
		\qquad
		\norm{v_{k_i,i}}{\spfa^{k_i}}
		\lesssim
		\norm{P_{k_i}\underline{u}_i}{\spfb^{k_i}_T}
		=
		\norm{P_{k_i}u_i}{\spfb^{k_i}_T}
	\end{align}
	hold. More precisely, we interpret $1_{[0,T]}P_{k_i}\underline{u}_i$ as an element of the space $\spfb^{k_i}_{[0,T]}$ (that is $\spfa^{k_i}$ restricted to $[0,T]$), apply Proposition \ref{lem_ext_spf_spn} for that space and use that any extension of $1_{[-T,T]}P_{k_i}\underline{u}_i$ is already an extension of $1_{[0,T]}P_{k_i}\underline{u}_i$.
	Next, we choose $\gamma\in C^\infty(\bbr)$ satisfying $1_{[-1/4,1/4]}\leq\gamma\leq1_{[-1,1]}$ and $\sum_{m\in\bbz}\gamma^4(\cdot+m)=1$. According to Lemma \ref{lem_gam_eta}, we have 
	\begin{align}\label{p_est_i4n_gam}
		h_{k_j,m}(t)
		:=
		\gamma(2^{k_1^*\vari}t-m)\eta_0(2^{k_j\vari}t-m)
		=
		\gamma(2^{k_1^*\vari}t-m)
	\end{align}
	for each $j\in[4]$ and conclude
	\begin{align*}
		S_{k_1,k_2,k_3,k_4}
		&=
		\int_0^T\int_\bbr
		\bigast_{i=1}^4P_{k_i}\underline{u}_i(t,x) dxdt
		=
		\int_0^T\int_\bbr
		\bigast_{i=1}^4P_{k_i}v_{k_i,i}(t,x) dxdt
		=
		\int_0^T\sum_{\xi_{1234}=0}\,
		\prod_{i=1}^4\hat{v}_{k_i,i}(t,\xi_i) dt\nonumber
		\\&=
		\int_0^T\mathcal{F}_x\left[\prod_{i=1}^4 v_{k_i,i}(t,\cdot)\right](0) dt
		=
		\int_\bbr1_{[0,T]}(t)\sum_{m\in\bbz}\mathcal{F}_x\left[
		\prod_{i=1}^4h_{k_i,m}(t)v_{k_i,i}(t,\cdot)\right](0) dt.
	\end{align*}
	Now, we split the sum over $m$ into three parts. For this, define
	\begin{align*}
		M_1
		&=
		\{m\in\bbz:1_{[0,T]}(t)\gamma(2^{k_1^*\vari}t-m)=\gamma(2^{k_1^*\vari}t-m)\text{ on }[0,T]\},
		\\M_2
		&=
		\{m\in\bbz:0\neq1_{[0,T]}(t)\gamma(2^{k_1^*\vari}t-m)\neq\gamma(2^{k_1^*\vari}t-m)\text{ on }[0,T]\},
	\end{align*}
	as well as $M_3=\complement\left(M_1\cup M_2\right)$. Clearly, we have $S_{k_1,k_2,k_3,k_4}|_{M_3}=0$. Thus, it suffices to consider $M_1$ and $M_2$.
	
	Let us start with the sum over $M_1$. In this case, we can drop the temporal indicator function due to \eqref{p_est_i4n_gam}. Moreover, since the support of $\gamma$ is of unit size, we have $\abs{M_1}\lesssim T2^{k_1^*\vari}$. We conclude
	\begin{align*}
		S_{k_1,k_2,k_3,k_4}|_{M_1}
		&=
		\int_\bbr\sum_{m\in M_1}\mathcal{F}_x\left[
		\prod_{i=1}^4h_{k_i,m}(t)v_{k_i,i}(t,\cdot)\right](0) dt
		\lesssim
		T2^{k_1^*\vari}\sup_{m\in M_1} 
		\mathcal{F}_{t,x}\left[
		\prod_{i=1}^4h_{k_i,m}v_{k_i,i}\right](0,0)
		\\&=
		T2^{k_1^*\vari}\sup_{m\in M_1} 
		\bigast_{i=1}^4\mathcal{F}_{t,x}\left[
		h_{k_i,m}v_{k_i,i}\right](0,0).
	\end{align*}
	For $i\in[4]$, define
	\begin{align*}
		f_{l_i,k_i}(\tau,\xi)
		:=
		\begin{cases}
			\eta_{l_i}(\tau-\omega(\xi))\fttx{h_{k_i,m}v_{k_i,i}}(\tau,\xi)
			&\text{if }l_i>\lfloor k_1^*\vari\rfloor,
			\\
			\eta_{\leq\lfloor k_1^*\vari\rfloor}(\tau-\omega(\xi))\fttx{h_{k_i,m}v_{k_i,i}}(\tau,\xi)
			&\text{if }l_i=\lfloor k_1^*\vari\rfloor.
		\end{cases}
	\end{align*}
	Now, we can apply \eqref{case_quad_generic} and \eqref{eq_est_mod_l2xk} to the sums over $l_i$ and arrive at
	\begin{align*}
		S_{k_1,k_2,k_3,k_4}|_{M_1}
		&=
		T2^{k_1^*\vari}\sup_{m\in M_1} \sum_{l_j\geq\lfloor k_1^*\vari\rfloor,\,j\in[4]}
		\bigast_{i=1}^4f_{l_i,k_i}(0,0)
		\\&\lesssim
		T2^{k_1^*\vari}2^{k_3^*/2}2^{k_4^*/2}\sup_{m\in M_1} \sum_{l_j\geq\lfloor k_1^*\vari\rfloor,\,j\in[4]}
		2^{l_3^*/2}2^{l_4^*/2}\prod_{i=1}^4\norm{f_{l_i,k_i}}{\lebL^2}
		\\&\lesssim
		T2^{k_3^*/2}2^{k_4^*/2}\sup_{m\in M_1}
		\prod_{i=1}^4\norm{\mathcal{F}_{t,x}\left[h_{k_i,m}v_{k_i,i}\right]}{\spx^{k_i}}.
	\end{align*}
	Lastly, we use \eqref{eq_est_xk_loc} and take the supremum in $t_{k_i}$ for each factor separately leading to a loss of the dependency on $m$. Then, by \eqref{p_est_i4n_ext}, we arrive at the desired bound:
	\begin{align*}
		S_{k_1,k_2,k_3,k_4}|_{M_1}
		&\lesssim
		T2^{k_3^*/2}2^{k_4^*/2}\sup_{m\in M_1}
		\prod_{i=1}^4\norm{\mathcal{F}_{t,x}\left[\eta_0(2^{k_1\vari}t-m)v_{k_i,i}(t,x)\right]}{\spx^{k_i}}
		\\&\lesssim
		T2^{k_3^*/2}2^{k_4^*/2}
		\prod_{i=1}^4\norm{v_{k_i,i}}{\spfa^{k_i}}
		\lesssim
		T2^{k_3^*/2}2^{k_4^*/2}
		\prod_{i=1}^4\norm{P_{k_i}u_{i}}{\spfb^{k_i}_T}.
	\end{align*}
	It remains to consider the summation over $M_2$ in $S_{k_1,k_2,k_3,k_4}$. Note that $m\in M_2$ implies that either $0$ or $T$ is in the support of $[0,T]\ni t\mapsto\gamma(2^{k_1^*\vari}t-m)$. Since the support of $\gamma$ is strictly contained in $[-1,1]$, $M_2$ has at most $4$ elements. We conclude
	\begin{align*}
		S_{k_1,k_2,k_3,k_4}|_{M_2}
		&\lesssim
		\sup_{m\in M_2}\int_\bbr1_{[0,T]}(t)\mathcal{F}_x\left[
		\prod_{i=1}^4h_{k_i,m}(t)v_{k_i,i}(t,x)\right](0) dt
		\\&=
		\sup_{m\in M_2}\mathcal{F}_{t,x}\left[1_{[0,T]}(t)
		\prod_{i=1}^4h_{k_i,m}(t)v_{k_i,i}(t,x)\right](0,0).
	\end{align*}
	Now, we localize in the modulation variable and attach the function $1_{[0,T]}$ to the factor with the highest modulation variable, which we assume to be $l_1$. More precisely, we set
	\begin{align*}
		f_{l_j,k_j}(\tau,\xi)
		:=
		\begin{cases}
			\eta_{l_1}(\tau-\omega(\xi))\fttx{1_{[0,T]}h_{k_1,m}v_{k_1,1}}(\tau,\xi)
			&\text{if }l_j>\lfloor k_1^*\vari\rfloor, j=1,
			\\
			\eta_{l_j}(\tau-\omega(\xi))\fttx{h_{k_j,m}v_{k_j,j}}(\tau,\xi)
			&\text{if }l_j>\lfloor k_1^*\vari\rfloor, j\neq 1,
			\\
			\eta_{\lfloor\leq k_1^*\vari\rfloor}(\tau-\omega(\xi))\fttx{1_{[0,T]}h_{k_j,m}v_{k_j,j}}(\tau,\xi)
			&\text{if }l_j=\lfloor k_1^*\vari\rfloor, j=1,
			\\
			\eta_{\lfloor\leq k_1^*\vari\rfloor}(\tau-\omega(\xi))\fttx{h_{k_j,m}v_{k_j,j}}(\tau,\xi)
			&\text{if }l_j=\lfloor k_1^*\vari\rfloor, j\neq1.
		\end{cases}
	\end{align*}
	Then, we proceed as before and obtain
	\begin{align*}
		S_{k_1,k_2,k_3,k_4}|_{M_2}
		&\lesssim
		\sup_{m\in M_2}\sum_{l_j\geq\lfloor k_1^*\vari\rfloor,\,j\in[4]}\bigast_{i=1}^4f_{l_i,k_i}(0,0)
		\\&\lesssim
		2^{k_3^*/2}2^{k_4^*/2}\sup_{m\in M_2}\sum_{l_j\geq\lfloor k_1^*\vari\rfloor,\,j\in[4]}2^{l_3^*/2}2^{l_4^*/2}\prod_{i=1}^4\norm{f_{l_i,k_i}}{\lebL^2}
		\\&\lesssim
		2^{-k_1^*\vari/2}2^{k_3^*/2}2^{k_4^*/2}\sup_{m\in M_2}\prod_{i=2}^4\norm{\mathcal{F}_{t,x}\left[h_{k_i,m}v_{k_i,i}\right]}{\spx^{k_i}}\sum_{l_1\geq\lfloor k_1^*\vari\rfloor}\norm{f_{l_1,k_1}}{\lebL^2}.
	\end{align*}
	To treat the norm involving the sharp time-cutoff $1_{[0,T]}$, we fix $d\in(0,1/2)$. Using Hölder's inequality as well as \eqref{eq_est_l2txk}, we obtain
	\begin{align*}
		\sum_{l_1\geq\lfloor k_1^*\vari\rfloor}\norm{f_{l_1,k_1}}{\lebL^2}
		&\lesssim
		2^{k_1^*\vari(d-1/2)}
		\sup_{l_1\geq\lfloor k_1^*\vari\rfloor}2^{l_1(1/2-d)}\norm{\eta_{l_1}(\tau-\omega(\xi))\fttx{1_{[0,T]}h_{k_1,m}v_{k_1,1}}}{\lebL^2}
		\\&\lesssim
		2^{k_1^*\vari(d-1/2)}
		T^d\norm{\mathcal{F}_{t,x}\left[h_{k_1,m}v_{k_1,1}\right]}{\spx^{k_1}}.
	\end{align*}
	As before, we apply \eqref{eq_est_xk_loc} to each factor, take the supremum in each $t_{k_i}$, $i\in[4]$, and use \eqref{p_est_i4n_ext} leading to
	\begin{align*}
		S_{k_1,k_2,k_3,k_4}|_{M_2}
		&\lesssim
		T^d2^{k_1^*(d-\vari)}2^{k_3^*/2}2^{k_4^*/2}\sup_{m\in M_2}\prod_{i=1}^4\norm{\mathcal{F}_{t,x}\left[h_{k_i,m}v_{k_i,i}\right]}{\spx^{k_i}}
		\\&\lesssim
		T^d2^{k_1^*(d-\vari)}2^{k_3^*/2}2^{k_4^*/2}\prod_{i=1}^4\norm{u_i}{\spfb^{k_i}_T}.
	\end{align*}
	Since $T\in(0,1]$ and $d<1<\vari$ hold, the claim follows.
\end{proof}
\vspace{1cm}
\section{Energy estimates}\label{s_energy}

In this section we prove the following three energy estimates:

\begin{lem}\label{lem_est_spe}
	Let $T\in(0,1]$, $\alpha\in(0,1)$ and $n\in\bbn$. Let $r\geq s>3/2-\alpha$. There exist $c,d>0$ such that:
	\begin{itemize}
		\item[(1)]	For all smooth solutions $u$ of \eqref{eq_PDE} with mean zero we have
		\begin{equation}\label{est_ert}
			\norm{u}{\spec^r_T}^2
			\lesssim
			\norm{u_0}{\lebH^r}^2
			+
			T2^{2n}\norm{u}{\spfc^r_T}^2\norm{u}{\spfc^s_T}
			+
			2^{-cn}\norm{u}{\spfc^r_T}^2\norm{u}{\spfc^s_T}
			+
			T^{d}\norm{u}{\spfc^r_T}^2\norm{u}{\spfc^s_T}^2.
		\end{equation}
		\item[(2)]	For all smooth solutions $u_1$ and $u_2$ of \eqref{eq_PDE} with mean zero, $v=u_1-u_2$, $w=u_1+u_2$, we have
		\begin{align}\label{est_ezt}
			\begin{split}
				\norm{v}{\spec^{-1/2}_T}^2
				&\lesssim
				\norm{v_0}{\lebH^{-1/2}}^2
				+
				(T2^{2n}+2^{-cn})\norm{v}{\spfc^{-1/2}_T}^2\norm{w}{\spfc^s_T}
				\\&+
				T^d\norm{v}{\spfc^{-1/2}_T}^2
				\left(
				\norm{w}{\spfc^s_T}^2+\norm{u_1}{\spfc^s_T}^2+\norm{u_2}{\spfc^s_T}^2\right).
			\end{split}
		\end{align}
		\item[(3)]	For all smooth solutions $u_1$ and $u_2$ of \eqref{eq_PDE} with mean zero, $v=u_1-u_2$, $w=u_1+u_2$, we have
		\begin{align}\label{est_est}
			\begin{split}
				\norm{v}{\spec^s_T}^2
				&\lesssim
				\norm{v_0}{\lebH^s}^2
				+
				2^{-cn}\norm{v}{\spfc^{-1/2}_T}\norm{v}{\spfc^s_T}\norm{u_2}{\spfc^{s+2-\alpha}_T}
				\\&
				+
				(T2^{2n}+2^{-cn})\left(\norm{v}{\spfc^s_T}^3
				+
				\norm{v}{\spfc^s_T}^2\norm{u_2}{\spfc^s_T}\right)
				\\&
				+
				T^d\big(\norm{v}{\spfc^s_T}^4
				+
				\norm{v}{\spfc^s_T}^3\norm{u_2}{\spfc^s_T}
				+
				\norm{v}{\spfc^s_T}^2\norm{u_2}{\spfc^s_T}\norm{w}{\spfc^s_T}
				+
				\norm{v}{\spfc^s_T}^2\norm{u_2}{\spfc^s_T}^2
				\\&
				+
				\norm{v}{\spfc^{-1/2}_T}\norm{v}{\spfc^s_T}\norm{u_2}{\spfc^s_T}\norm{u_2}{\spfc^{s+2-\alpha}_T}
				+
				\norm{v}{\spfc^{-1/2}_T}\norm{v}{\spfc^s_T}^2\norm{u_2}{\spfc^{s+2-\alpha}_T}\big).
			\end{split}
		\end{align}
	\end{itemize}
\end{lem}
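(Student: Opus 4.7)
For all three estimates, I would start from the identity
\begin{align*}
\norm{P_k u(t)}{\lebL^2}^2 - \norm{P_k u_0}{\lebL^2}^2
=
2\int_0^t\int_\bbt P_k u\cdot P_k\dx(u^2)\,dx\,d\tau,
\end{align*}
obtained by pairing the equation with $P_k u$ and using that $\dx D^\alpha_x$ is skew-adjoint. Multiplying by $2^{2kr}$, taking the supremum in $t\in[0,T]$ and summing in $k$ reduces \eqref{est_ert} to a bound on the trilinear form $\sum_k 2^{2kr}\int_0^T\!\int P_k u\,P_k\dx(u^2)\,dx\,dt$. For \eqref{est_ezt} and \eqref{est_est}, $v=u_1-u_2$ solves $\dt v + \dx D^\alpha_x v = \dx(vw)$, so the analogous identity produces $\sum_k 2^{2kb}\int_0^T\!\int P_k v\,P_k\dx(vw)\,dx\,dt$ with $b=-1/2$ or $b=s$.

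\textbf{Frequency decomposition and the high--low symmetrization.} After Littlewood--Paley decomposing each factor, the triples $(k,k_1,k_2)$ split into the cases $k\sim k_1\sim k_2$, $k_1\sim k_2\gg k$, and $k_1\sim k\gg k_2$ (together with the symmetric one). The truly dangerous case is the last one, since the $\dx$ falls on the highest frequency. I would symmetrize by summing this with its twin (obtained by relabeling the two outer $P_k u$ factors); after an integration by parts in $x$, the sum becomes a commutator $[P_k, P_{k_2} u]\dx P_{k_1}u$, which transfers one derivative off the high-frequency factor onto the low-frequency one, rendering the estimate tractable.

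\textbf{Modulation split and the $n$-dependent bounds.} For each trilinear piece I would dyadically split the modulation $\tau-\omega(\xi)$ of each factor at the threshold $2^n$. On the low-modulation branch, I would apply Lemma~\ref{lem_i4} (regarding the outer $P_k u$ as a fourth factor paired with a sharp cut-off $1_{[0,T]}$); the support restriction gives an $L^2$-gain yielding the factor $T2^{2n}$, and the $M_2$-contribution in the proof of Lemma~\ref{lem_i4} supplies the additional $T^d$ term $T^d\norm{u}{\spfc^r_T}^2\norm{u}{\spfc^s_T}^2$. On the high-modulation branch, an integration by parts in time (normal form), exploiting the lower bound $\abs{\omega(\xi_1)+\omega(\xi_2)+\omega(\xi_3)}\sim \abs{\xi_1^*}^\alpha\abs{\xi_3^*}$, produces a divisor of size $\gtrsim 2^n$ and yields the gain $2^{-cn}$, at the cost of boundary terms at $t\in\{0,T\}$ (absorbed in $\norm{u_0}{\lebH^r}^2$ and the energy norm) and a quartic term in which one $\dt u$ is replaced by $-\dx D^\alpha_x u + \dx(u^2)$, again controlled by Lemma~\ref{lem_i4}.

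\textbf{Adjustments for the difference estimates.} Estimate \eqref{est_ezt} follows by the same scheme with $v,v,w$ in the three slots; the regularity $-1/2$ is exactly what allows the derivative in $\dx(vw)$ to be absorbed against $\norm{w}{\spfc^s_T}$ for $s>3/2-\alpha$. The main obstacle is \eqref{est_est}: in the high--low interaction where $v$ appears at the top frequency against a low-frequency $u_2$, I would symmetrize and proceed as above, but in the alternative low--high--high regime where $v$ is low and $u_2$ is high, controlling $v$ only by its $\spfc^{-1/2}_T$ norm forces me, after the normal-form integration by parts, to pay the modulation divisor $\abs{\xi_1^*}^{\alpha}\abs{\xi_3^*}$ in the form of $2-\alpha$ extra derivatives on one of the high-frequency factors, which is precisely where $\norm{u_2}{\spfc^{s+2-\alpha}_T}$ enters. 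Careful bookkeeping of which factor carries the $\spfc^{-1/2}_T$ versus $\spfc^s_T$ norm, combined with the quartic remainder created by substituting $\dt u_2$ from the equation, produces exactly the list of terms in \eqref{est_est}; this tracking is the most delicate step.
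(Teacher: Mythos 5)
Your high-level framework -- energy identity, symmetrization to shift the derivative onto the low frequency, normal form integration by parts in time exploiting the resonance, and Lemma~\ref{lem_i4} for the quartic remainder -- matches the paper. However, there is a concrete error in how the parameter $n$ enters. The paper splits by the size of the highest \emph{frequency}, not by modulation: the sum over $k_1\gg k_3$ is divided into $k_1\leq n$ and $k_1>n$. The regime $k_1\leq n$ is estimated by a direct H\"older/Bernstein argument yielding the $T2^{2n}$ term (Lemma~\ref{lem_est_ert_ll}), while for $k_1>n$ the normal form is applied; the boundary term gives $2^{-cn}$ because the resonance divisor $|\Omega|\sim 2^{k_1\alpha}2^{k_3}$ is large when $k_1>n$, and the quartic remainder gives $T^d$ via Lemma~\ref{lem_i4} (Lemma~\ref{lem_est_ert_hl}).

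Your proposed dyadic split in the modulation $\tau-\omega(\xi)$ at $2^n$ does not mesh with this mechanism. The integration by parts in time produces a divisor that depends only on the frequencies, so on your high-modulation branch there is no reason for it to be $\gtrsim 2^n$. On your low-modulation branch the expression is still trilinear, so Lemma~\ref{lem_i4} (a quadrilinear estimate) cannot be applied there as you describe, and the $T2^{2n}$ bound has a different, more elementary origin. You also misattribute the $T^d\norm{u}{\spfc^r_T}^2\norm{u}{\spfc^s_T}^2$ term to the $M_2$-contribution in the proof of Lemma~\ref{lem_i4}; the $M_1/M_2$ decomposition is internal to that lemma's proof, and the $T^d$ factor in the energy estimate is the full bound of Lemma~\ref{lem_i4} applied to the genuinely quartic expressions produced by the normal form. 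Apart from this, your account of the difference estimates -- in particular that the high-high interaction in which $v$ carries only the $\spfc^{-1/2}_T$ norm forces $\norm{u_2}{\spfc^{s+2-\alpha}_T}$ to appear -- is consistent with the paper's Lemma~\ref{lem_hs_hh}.
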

All three estimates will be proved with the help of the quadrilinear estimate from Lemma \ref{lem_i4}, commutator estimates and cancellations due to symmetry. Additionally, we need to control the resonance function, which will appear in the calculations.

Let us recall that the dispersion relation $\omega$ is given by $\omega(\xi)=-\xi\abs{\xi}^\alpha$ and that it appears naturally due to $\mathcal{F}(\dx D_x^\alpha u)(\xi)=i\omega(\xi) \mathcal{F}_x(u)(\xi)$. For $\xi_1$, $\xi_2$ and $\xi_3$ satisfying $\xi_{123}=0$, we define the resonance function by
\begin{equation}\label{eq_res_fct}
	\Omega(\xi_1,\xi_2,\xi_3)
	:=
	\omega(\xi_1)
	+
	\omega(\xi_2)
	+
	\omega(\xi_3).
\end{equation}
A simple calculation leads to the asymptotic behaviour
\begin{equation}\label{eq_est_res}
	\abs{\Omega(\xi_1,\xi_2,\xi_3)}
	\sim
	\abs{\xi_1^*}^\alpha\abs{\xi_3^*}.
\end{equation}
 In the rest of this section, we write $k\lesssim l$, $k\ll l$ and $k\sim l$ instead of $2^k\lesssim 2^l$, $2^k\ll 2^l$ and $2^k\sim 2^l$. Besides the aforementioned behaviour of the resonance function, we also make use of the following estimate: \begin{lem}\label{cor_est_res_inv}
	Let $k_2\sim k_a\gg k_3$ and $k_2\gtrsim k_b$. Moreover, let $\xi_i\in\bbz$, $i\in\{a,b,2,3\}$, satisfy $\abs{\xi_i}\sim2^{k_i}$ and $\xi_{ab23}=0$. Then, we have
	\begin{equation}\label{eq_est_res_inv}
		\abs{\frac{1}{\Omega}(\xi_a,\xi_{2b},\xi_3)-\frac{1}{\Omega}(\xi_{ab},\xi_2,\xi_3)}
		\lesssim
		2^{-k_1(1+\alpha)}2^{-k_3}2^{k_b}.
	\end{equation}
\end{lem}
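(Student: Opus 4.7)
The plan is to bring both terms on the left-hand side of \eqref{eq_est_res_inv} to a common denominator and estimate the numerator and denominator separately. From $\xi_{ab23}=0$, both $(\xi_a,\xi_{2b},\xi_3)$ and $(\xi_{ab},\xi_2,\xi_3)$ sum to zero, and I will verify from the frequency hypotheses that $|\xi_{ab}|=|\xi_2+\xi_3|\sim 2^{k_2}$ and $|\xi_{2b}|=|\xi_a+\xi_3|\sim 2^{k_2}$, since $k_3\ll k_2$. Thus in both triples the largest reordered frequency is $\sim 2^{k_2}$ and the smallest is $\sim 2^{k_3}$, so by \eqref{eq_est_res} each $|\Omega|$ factor is of size $\sim 2^{k_2\alpha+k_3}$, giving a denominator of size $\sim 2^{2k_2\alpha+2k_3}$.

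The main task is to bound the numerator
\begin{equation*}
N=[\omega(\xi_{ab})-\omega(\xi_a)]-[\omega(\xi_{2b})-\omega(\xi_2)]
\end{equation*}
by $\lesssim 2^{k_b+k_3+k_2(\alpha-1)}$. The key step is an algebraic rewriting exploiting the sum constraints and the oddness of $\omega$: since $\xi_2=-\xi_{ab}-\xi_3$, one has $\omega(\xi_{ab})+\omega(\xi_2)=\omega(\xi_{ab})-\omega(\xi_{ab}+\xi_3)$, and analogously $\omega(\xi_a)+\omega(\xi_{2b})=\omega(\xi_a)-\omega(\xi_a+\xi_3)$. This yields the clean identity
\begin{equation*}
N=g(\xi_a)-g(\xi_a+\xi_b),\qquad g(y):=\omega(y+\xi_3)-\omega(y),
\end{equation*}
which presents $N$ as a difference of $g$ along a path of length $|\xi_b|$, making the $2^{k_b}$ gain manifest.

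I then plan to estimate $|N|\leq\int_0^{|\xi_b|}|g'(\xi_a+t)|\,dt$ using pointwise bounds on $g'(y)=\omega'(y+\xi_3)-\omega'(y)$: from $|\omega''(y)|\sim|y|^{\alpha-1}$ the mean value theorem gives $|g'(y)|\lesssim|\xi_3||y|^{\alpha-1}$ when $|y|\gtrsim|\xi_3|$, while H\"older continuity of $|\cdot|^\alpha$ gives $|g'(y)|\lesssim|\xi_3|^\alpha$ otherwise. If $2^{k_b}\ll 2^{k_2}$, the segment $\xi_a+[0,\xi_b]$ stays at magnitude $\sim 2^{k_2}$ and the first bound yields $|N|\lesssim 2^{k_b+k_3+k_2(\alpha-1)}$. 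If instead $2^{k_b}\sim 2^{k_2}$, the path may cross zero, but integrating both regimes of the $|g'|$ bound over a segment of length $\lesssim 2^{k_2}$ gives $|N|\lesssim 2^{k_3+k_2\alpha}$, which coincides with the target bound in this regime. Dividing by the denominator estimate of the first paragraph yields the claim. The main obstacle is the algebraic identity above: a direct mean value theorem applied to $N=h(\xi_a)-h(\xi_2)$ with $h(y)=\omega(y+\xi_b)-\omega(y)$ would require controlling $h'$ on the segment between $\xi_a$ and $\xi_2$, which (when these have opposite signs) passes through a neighborhood of the origin where $\omega''$ is singular.
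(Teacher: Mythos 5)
Your proof is correct and follows essentially the same route as the paper's: bring both terms over a common denominator, use the oddness of $\omega$ and the constraint $\xi_{ab23}=0$ to rewrite the numerator as a second difference of $\omega$ (equivalently, your $g(\xi_a)-g(\xi_a+\xi_b)$), bound the denominator via \eqref{eq_est_res}, and split into $k_b\ll k_2$ versus $k_b\sim k_2$. The paper invokes the double mean value theorem in the first regime and a direct fundamental-theorem-of-calculus bound (times the crude factor $2^{k_b-k_1}\gtrsim 1$) in the second; your unified integral $\int|g'|$ with pointwise bounds $|g'(y)|\lesssim\min\{|\xi_3|^\alpha,\,|\xi_3||y|^{\alpha-1}\}$ gives the same bounds, and your observation explaining why the naive mean value theorem across the segment from $\xi_a$ to $\xi_2$ fails is exactly the obstruction the paper's case split is designed to avoid.
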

\begin{proof}
	Using the definition \eqref{eq_res_fct} and the fact that $\omega$ is an odd function, we obtain
	\begin{align*}
		\frac{1}{\Omega}(\xi_a,\xi_{2b},\xi_3)-\frac{1}{\Omega}(\xi_{ab},\xi_2,\xi_3)
		=
		\frac{\omega(\xi_{a})-\omega(\xi_{a3})-\omega(\xi_{ab})+\omega(\xi_{ab3})}{\Omega(\xi_2,\xi_{ab},\xi_3)\Omega(\xi_{ab},\xi_2,\xi_3)}.
	\end{align*}
	By applying \eqref{eq_est_res}, we can bound the denominator above by
	\begin{align*}
		\abs{\frac{1}{\Omega(\xi_2,\xi_{ab},\xi_3)\Omega(\xi_{ab},\xi_2,\xi_3)}}
		\lesssim
		2^{-2\alpha k_2}2^{-2k_3}.
	\end{align*}
	To bound the numerator, we consider two cases: First, we assume $k_2\gg k_b$. Then, the integers $\xi_a$, $\xi_{a3}$, $\xi_{ab}$ and $\xi_{ab3}$ all have the same sign and have modulus of size $2^{k_a}$. This allows to apply the double mean value theorem leading to the bound $2^{k_a(\alpha-1)}2^{k_3}2^{k_b}$. 
	
	Next, we consider $k_2\sim k_b$. In this case, the integers $\xi_a$, $\xi_{ab}$, $\xi_{a3}$ and $\xi_{ab3}$ have modulus of size $2^{k_a}$ due to $\xi_{ab}=-\xi_{23}$ and $\xi_{ab3}=-\xi_2$. Moreover, $\xi_a$ and $\xi_{a3}$ (resp. $\xi_{ab}$ and $\xi_{ab3}$) have the same sign. Thus, the fundamental theorem of calculus yields the bounds $\abs{\omega(\xi_{a})-\omega(\xi_{a3})}\lesssim 2^{k_3}2^{k_1\alpha}$ and $\abs{\omega(\xi_{ab})-\omega(\xi_{ab3})}\lesssim 2^{k_3}2^{k_1\alpha}$. After multiplication with $2^{k_b-k_1}\gtrsim 1$, we obtain the same bound as in the first case.
\end{proof}

Now, we can start proving Lemma \ref{lem_est_spe}. Throughout this section, we assume that the premisses of Lemma \ref{lem_est_spe} hold true. Also, without mentioning, we freely localize functions using the projector $P_k$ and denote $P_k u$ by $u_k$.
\subsection{Proof of the first energy estimate}\label{ss_for_solution}

As the title suggests, this section is devoted to the proof of \eqref{est_ert}.
We apply $P_{k_1}$ to equation \eqref{eq_PDE}, multiply it by $P_{k_1}u$ and integrate over $\bbt$. Since $\chi_{k_1}$ is even and $u$ is real, so is $P_{k_1}u$. Moreover, recall that $\dx D_x^\alpha$ has the purely imaginary symbol $i\omega$. We conclude
\begin{align*}
	\dt\norm{P_{k_1}u}{\lebL^2}^2
	=
	2\int_\bbt P_{k_1}u P_{k_1}\dt u dx
	=
	2\int_\bbt P_{k_1}u P_{k_1}\dx (u^2) dx
	=
	4\int_\bbt P_{k_1}^2 u u \dx u dx.
\end{align*}
Now, we integrate over $[0,t_{k_1}]$, use the fundamental theorem of calculus, take the supremum over $t_{k_1}$ in $[0,T]$, multiply by $2^{k_12r}$ and take the sum over $k_1$ in $\bbn$ leading to
\begin{align}\label{eq_dif_ert}
	\norm{u}{\spec_T^r}^2 -\norm{u_0}{H^r}^2
	&=
	4\sum_{k_1}2^{k_12r}\sup_{t_{k_1}}\int_0^{t_{k_1}}\sum_{\xi_{123}=0}i\xi_1\chi_{k_1}^2(\xi_1)\ftxh{u}(\xi_1)\ftxh{u}(\xi_2)\ftxh{u}(\xi_3)\nonumber
	\\&\lesssim
	\sum_{k_1\sim k_2\gg k_3}2^{k_12r}\sup_{t}\int_0^{t}\sum_{\xi_{123}=0}\underset{=:\sigma(\xi_1,\xi_2,\xi_3)}{\underbrace{i\xi_1\chi_{k_1}^2(\xi_1)\chi_{k_2}(\xi_2)\chi_{k_3}(\xi_3)}}\ftxh{u}(\xi_1)\ftxh{u}(\xi_2)\ftxh{u}(\xi_3).
\end{align}
Above, the last sum is restricted to the regime $k_1\sim k_2\gg k_3$. Clearly, this estimate is not correct at first. However, we will justify \eqref{eq_dif_ert} at the end of this section showing that it is sufficient to treat only this case.

We want to shift the derivative to the low frequency. For $\xi_{123}=0$ we write
\begin{align*}
	\sigma(\xi_1,\xi_2,\xi_3)
	+
	\sigma(\xi_2,\xi_1,\xi_3)
	=&
	-i\xi_3\chi^2_{k_1}(\xi_1)\chi_{k_2}(\xi_2)\chi_{k_3}(\xi_3)
	&=:\sigma_1(\xi_1,\xi_2,\xi_3)
	\\&
	-\xi_2\left[\chi^2_{k_1}(\xi_1)-\chi^2_{k_1}(\xi_2)\right]\chi_{k_2}(\xi_2)\chi_{k_3}(\xi_3)
	&=:\sigma_2(\xi_1,\xi_2,\xi_3)
	\\&
	-i\xi_2\chi^2_{k_1}(\xi_2)\left[\chi_{k_2}(\xi_2)-\chi_{k_2}(\xi_1)\right]\chi_{k_3}(\xi_3)
	&=:\sigma_3(\xi_1,\xi_2,\xi_3)
\end{align*}
and observe the trivial bound $\abs{\sigma_j(\xi_1,\xi_2,\xi_3)}\lesssim 2^{k_3}$ for each $j\in[3]$.
The above  implies
\begin{align}\label{eq_sig}
	2\sum_{\xi_{123}=0}\sigma(\xi_1,\xi_2,\xi_3)\hat{u}(\xi_1)\hat{u}(\xi_2)\hat{u}(\xi_3)
	=
	\sum_{j\in[3]}\sum_{\xi_{123}=0}\sigma_j(\xi_1,\xi_2,\xi_3)\hat{u}(\xi_1)\hat{u}(\xi_2)\hat{u}(\xi_3).
\end{align}
Hence, for a parameter $n\in\bbn$, we obtain
\begin{align}\label{eq_split_i}
	\begin{split}
		\eqref{eq_dif_ert}
		&=
		\sum_{k_1\gg 	k_3}\underset{=:I(k_1,k_3)}{\underbrace{2^{k_12r}\sup_{t}\sum_{\substack{k_2\\j\in[3]}}\int_0^{t}\sum_{\xi_{123}=0}\sigma_j(\xi_1,\xi_2,\xi_3)\ftxh{u}(\xi_1)\ftxh{u}(\xi_2)\ftxh{u}(\xi_3)}}
		\\
		&=
		\sum_{n\geq k_1\gg k_3} I(k_1,k_3)
		+
	\sum_{n<k_1\gg k_3} I(k_1,k_3).
	\end{split}
\end{align}
Let us continue with bounding the right-hand side of \eqref{eq_split_i} in the next two lemmata.
First, we estimate the low-frequency contribution.
\begin{lem}\label{lem_est_ert_ll}
	Let $r\geq s> 1/2$. Then, we have
	\begin{equation*}
		\sum_{n\geq k_1\gg k_3} \abs{I(k_1,k_3)}
		\lesssim
		T2^{2n}\norm{u}{\spfc^r_T}^2\norm{u}{\spfc^s_T}.
	\end{equation*}
\end{lem}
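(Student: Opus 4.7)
The plan is to exploit the fact that in the low-frequency regime $n\geq k_1\gg k_3$ all frequencies involved are bounded by $2^n$. No sharp estimate is required---the quadrilinear bound of Lemma \ref{lem_i4} is not needed here---so the argument reduces to a crude application of Hölder and Bernstein, combined with the single-mode embedding $\spfb^k_T\hookrightarrow C([-T,T];\lebL^2)$ that follows from \eqref{eq_est_emb_spfc}.

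The first step is the pointwise bound $|\sigma_j(\xi_1,\xi_2,\xi_3)|\lesssim 2^{k_3}$ for each $j\in[3]$. For $\sigma_1$ this is immediate from the explicit definition. For $\sigma_2$ and $\sigma_3$, the parity of the cutoffs $\chi_{k_i}$ together with the constraint $\xi_1+\xi_2+\xi_3=0$ (which forces $|\xi_1-(-\xi_2)|=|\xi_3|\lesssim 2^{k_3}$) allows one to apply the mean value theorem to differences such as $\chi_{k_1}^2(\xi_1)-\chi_{k_1}^2(\xi_2)=\chi_{k_1}^2(\xi_1)-\chi_{k_1}^2(\xi_1+\xi_3)$, producing the gain $2^{-k_1}\cdot 2^{k_3}$ that compensates the factor $|\xi_2|\sim 2^{k_1}$. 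This is the cancellation effected by the symmetrization and reflects that the derivative is effectively placed on the low-frequency factor.

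With this multiplier bound in hand, I would estimate the spatial trilinear integral at fixed $t$ by Hölder's inequality (putting the low-frequency factor $P_{k_3}u$ in $\lebL^\infty$ and the other two in $\lebL^2$) and gain an additional factor $2^{k_3/2}$ via Bernstein. Pulling $T$ out of the time integral and invoking the embedding $\spfb^k_T\hookrightarrow C([-T,T];\lebL^2)$, one arrives at
\[
|I(k_1,k_3)|\lesssim T\cdot 2^{2k_1 r}\cdot 2^{3k_3/2}\cdot \prod_{j\in\{1,2,3\}}\|P_{k_j}u\|_{\spfb^{k_j}_T},
\]
where the $O(1)$ summation over $k_2$ with $k_2\sim k_1$ and over $j\in[3]$ is absorbed in the implicit constant.

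Finally I would sum over the index set $\{n\geq k_1\sim k_2\gg k_3\}$. Since $s>1/2$ gives $3-2s<2$, Cauchy--Schwarz in $k_3$ yields
\[
\sum_{k_3\leq n}2^{3k_3/2}\|P_{k_3}u\|_{\spfb^{k_3}_T}\leq\Big(\sum_{k_3\leq n}2^{(3-2s)k_3}\Big)^{1/2}\|u\|_{\spfc^s_T}\lesssim 2^n\|u\|_{\spfc^s_T},
\]
while the remaining summation over $k_1\sim k_2\leq n$ is controlled by $\|u\|_{\spfc^r_T}^2$. Combining these and using $2^n\leq 2^{2n}$ delivers the claim. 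The one step with any subtlety is the cancellation $|\sigma_j|\lesssim 2^{k_3}$; the rest is routine, as the slack factor $2^{2n}$ built into the statement is precisely designed to accommodate such a crude bound in the low-frequency range, leaving the delicate analysis for the complementary case $k_1>n$ treated in the next lemma.
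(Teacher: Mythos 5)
Your proof is correct and follows essentially the same approach as the paper: the pointwise bound $|\sigma_j|\lesssim 2^{k_3}$ (which the paper states as trivial and you verify carefully via the mean value theorem and evenness of $\chi_{k_1}$), followed by Hölder/Bernstein to gain $2^{k_3/2}$, the embedding into $L^\infty_T L^2$, pulling out $T$, and Cauchy--Schwarz in the dyadic sums. Your explicit summation in fact yields the slightly sharper factor $2^n$ in place of $2^{2n}$, which is consistent with the stated bound.
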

\begin{proof}
	Using the bound $\abs{\sigma_j}\lesssim 2^{k_3}$ as well as Hölder's and Jensen's inequalities, we obtain
	\begin{align*}
		\abs{I(k_1,k_3)}
		&\leq
		T\sum_{k_2}2^{k_12r}\sup_t\sum_{\xi_{123}=0}\abs{\sigma_j(\xi_1,\xi_2,\xi_3)}\abs{\ftxh{u}(\xi_1)}\abs{\ftxh{u}(\xi_2)}\abs{\ftxh{u}(\xi_3)}
		\\&\lesssim
		T\sum_{k_2}2^{k_12r}2^{k_3}\sup_t\sum_{\substack{\abs{\xi_i}\sim 2^{k_i},i\in[3]\\ \xi_{123}=0}}\abs{\ftxh{u}(\xi_1)}\abs{\ftxh{u}(\xi_2)}\abs{\ftxh{u}(\xi_3)}
		\\&\lesssim
		T\sum_{k_2}2^{k_12r}2^{k_33/2}\norm{u_{k_1}}{\lebL^\infty_T\lebL^2}\norm{u_{k_2}}{\lebL^\infty_T\lebL^2}\norm{u_{k_3}}{\lebL^\infty_T\lebL^2}.
	\end{align*}
	The claim follows by an application of the estimate \eqref{eq_est_emb_spfc} and summation in $k_1$ and $k_3$.
\end{proof}

Now, we have to estimate the second term on the right-hand side of \eqref{eq_split_i}. Recalling the definition \eqref{eq_res_fct} and the fact that $u$ is a smooth solution of \eqref{eq_PDE}, we have
\begin{align}\label{eq_ibp_uuu}
	\begin{split}
		&-\Omega(\xi_1,\xi_2,\xi_3)\hat{u}(\xi_1)\hat{u}(\xi_2)\hat{u}(\xi_3)
		\\=&\,
		\dt \left[\hat{u}(\xi_1)\hat{u}(\xi_2)\hat{u}(\xi_3)\right]
		+
		i\xi_1\widehat{uu}(\xi_1)\hat{u}(\xi_2)\hat{u}(\xi_3)
		+
		i\xi_2\hat{u}(\xi_1)\widehat{uu}(\xi_2)\hat{u}(\xi_3)
		+
		i\xi_3\hat{u}(\xi_1)\hat{u}(\xi_2)\widehat{uu}(\xi_3).
	\end{split}
\end{align}
In view of \eqref{eq_est_res}, this equation will turn out to be very helpful in the next lemma.

\begin{lem}\label{lem_est_ert_hl}
	Let $r\geq s>3/2-\alpha$. Then, there exist $c,d>0$ such that we have
	\begin{align*}
		\sum_{n<k_1\gg k_3}\abs{I(k_1,k_3)}
		\lesssim
		2^{-nc}\norm{u}{\spfc^r_T}^2\norm{u}{\spfc^s_T}
		+
		T^d\norm{u}{\spfc^r_T}^2\norm{u}{\spfc^s_T}^2.
	\end{align*}
\end{lem}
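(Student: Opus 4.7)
The plan is a normal form reduction exploiting the non-resonance bound \eqref{eq_est_res}: in the regime $k_1\sim k_2\gg k_3$ one has $\abs{\Omega(\xi_1,\xi_2,\xi_3)}\sim 2^{k_1\alpha+k_3}$ on the support, so $\Omega^{-1}$ is well-defined. I would multiply each trilinear integrand by $\Omega/\Omega$, substitute identity \eqref{eq_ibp_uuu} for $\Omega\hat u\hat u\hat u$, and integrate by parts in time, which is legitimate since $\sigma_j$ and $\Omega$ depend only on frequencies. This splits each contribution to $I(k_1,k_3)$ into a trilinear boundary term $-\left[\sum_{\xi_{123}=0}\frac{\sigma_j}{\Omega}\hat u(\xi_1)\hat u(\xi_2)\hat u(\xi_3)\right]_0^{t_{k_1}}$ and three quadrilinear space-time integrals, one for each term $i\xi_l\widehat{u^2}(\xi_l)\prod_{m\neq l}\hat u(\xi_m)$, $l\in[3]$, appearing in \eqref{eq_ibp_uuu}.

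For the boundary term, $\abs{\sigma_j}\lesssim 2^{k_3}$ gives the uniform bound $\abs{\sigma_j/\Omega}\lesssim 2^{-k_1\alpha}$. Repeating the H\"older--Bernstein argument from Lemma \ref{lem_est_ert_ll} but dropping the factor $T$ (since there is no time integration) delivers
\begin{align*}
    \abs{\text{boundary}}\lesssim \sum_{\substack{n<k_1,\,k_2\sim k_1\\k_3\ll k_1}}2^{2k_1r-k_1\alpha}2^{3k_3/2}\prod_{i=1}^3\norm{u_{k_i}}{\lebL^\infty_T\lebL^2}.
\end{align*}
Summing over $k_3$ converges because $s>1/2$ (which follows from $s>3/2-\alpha$) and produces $\norm{u}{\lebL^\infty_T\lebH^s}$, after which $\sum_{k_1>n}2^{-k_1\alpha}\lesssim 2^{-n\alpha}$, combined with \eqref{eq_est_emb_spfc}, yields the first summand $2^{-n\alpha}\norm{u}{\spfc^r_T}^2\norm{u}{\spfc^s_T}$.

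For the quadrilinear contribution, each term has the form $\int_0^{t_{k_1}}\sum_{\eta_1+\eta_2+\xi_{l'}+\xi_{l''}=0}\varphi(\eta_1,\eta_2,\xi_{l'},\xi_{l''})\hat u(\eta_1)\hat u(\eta_2)\hat u(\xi_{l'})\hat u(\xi_{l''})\,d\tau$ after expanding $\widehat{u^2}(\xi_l)=\sum_{\eta_1+\eta_2=\xi_l}\hat u(\eta_1)\hat u(\eta_2)$. In every case $\abs{\varphi}=\abs{\sigma_j\xi_l/\Omega}\lesssim 2^{k_1(1-\alpha)}$. Applying Lemma \ref{lem_i4} produces $T^d$ together with a sum over the four new frequency blocks weighted by $2^{(k_3^{*'}+k_4^{*'})/2}$, where $k_3^{*'},k_4^{*'}$ denote the third- and fourth-largest of the four scales. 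The hard part will be the systematic case analysis for the decomposition $\xi_l=\eta_1+\eta_2$: the delicate situation is $l=3$, where $\xi_3$ is low but $\eta_1,\eta_2$ may have comparable sizes much larger than $2^{k_3}$ (possibly even larger than $2^{k_1}$) via near-cancellation, producing a genuine high-high-to-low interaction. In each subcase one has to verify that the combined weight $2^{k_1(1-\alpha)+(k_3^{*'}+k_4^{*'})/2}$ is absorbed by the product of four $\spfb^\cdot_T$-norms when two factors carry weight $2^{k_1 r}$ and two carry weight $2^{k\cdot s}$; the threshold $s>3/2-\alpha$ is precisely what makes this summation converge, delivering $T^d\norm{u}{\spfc^r_T}^2\norm{u}{\spfc^s_T}^2$.
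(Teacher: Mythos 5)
Your overall strategy matches the paper: multiply by $\Omega/\Omega$, use \eqref{eq_ibp_uuu} to integrate by parts in time, estimate the boundary term directly, and treat the three quadrilinear space-time integrals via Lemma~\ref{lem_i4}. The boundary-term estimate is fine. The problem is in how you handle the quadrilinear terms.

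You claim that the uniform bound $\abs{\varphi}=\abs{\sigma_j\xi_l/\Omega}\lesssim 2^{k_1(1-\alpha)}$ suffices in every case once you track the weights $2^{k_3^{*\prime}/2}2^{k_4^{*\prime}/2}$ from Lemma~\ref{lem_i4}, and that $s>3/2-\alpha$ is what makes the sums close. This is false for $l\in\{1,2\}$ in the sub-regime $k_1\sim k_a\gg k_b$ (and symmetrically $k_1\sim k_b\gg k_a$). There the four blocks are $k_a\sim k_1$, $k_2\sim k_1$, $k_b\ll k_1$, $k_3\ll k_1$, so the Lemma~\ref{lem_i4} weight is $2^{(k_b+k_3)/2}$, and after absorbing the $2^{2k_1 r}$ prefactor and the two high-frequency factors into $\norm{u}_{\spfc^r_T}^2$ and the two low-frequency factors into $\norm{u}_{\spfc^s_T}^2$, the remaining factor is $2^{k_1(1-\alpha)}\,2^{k_b(1/2-s)}\,2^{k_3(1/2-s)}$. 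The sums over $k_b,k_3$ close for $s>1/2$, but the sum over $k_1>n$ then still carries the uncompensated factor $2^{k_1(1-\alpha)}$, which diverges for every $\alpha\in(0,1)$ regardless of how large $s$ is. This is exactly what the paper flags: a direct estimate of $\sigma_j/\Omega$ gives only $2^{k_1(1-\alpha)}$, \enquote{which is insufficient in this case.}

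The missing idea is the symmetrization and commutator argument that constitutes the entire $I_{12}$ part of the paper's proof. One uses that the integrand $\hat u(\xi_a)\hat u(\xi_b)\hat u(\xi_2)\hat u(\xi_3)$ is symmetric in the two high variables $\xi_a$ and $\xi_2$, replaces the summand of $\varphi$ by its symmetrization $\frac{\sigma_j}{\Omega}(\xi_{ab},\xi_2,\xi_3)\xi_{ab}+\frac{\sigma_j}{\Omega}(\xi_a,\xi_{2b},\xi_3)\xi_{2b}$, and then exploits cancellation in this combination. That requires telescoping into terms $s_1,s_2,s_3$, using $\xi_{ab}+\xi_{2b}=\xi_b-\xi_3$, double mean-value bounds on the Littlewood--Paley multipliers, and crucially the commutator estimate for $\Omega^{-1}$ in Lemma~\ref{cor_est_res_inv}, which gives $\abs{\Omega^{-1}(\xi_a,\xi_{2b},\xi_3)-\Omega^{-1}(\xi_{ab},\xi_2,\xi_3)}\lesssim 2^{-k_1(1+\alpha)}2^{-k_3}2^{k_b}$. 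Only after this does the symbol come out bounded by $2^{-\alpha k_1}2^{\max\{k_3,k_b\}}$, which, together with the $2^{(k_b+k_3)/2}$ from Lemma~\ref{lem_i4}, makes the $k_1$-sum converge under $s>3/2-\alpha$. Incidentally, the case you single out as delicate ($l=3$ with $\eta_1\sim\eta_2$ possibly $\gg 2^{k_1}$) is actually benign: there $\abs{\sigma_j\xi_3/\Omega}\lesssim 2^{-\alpha k_1}2^{k_3}$ is already small because $\xi_3$ is the low variable, and the direct bound plus Lemma~\ref{lem_i4} closes the sum without any symmetrization.
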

\begin{proof}
	We apply \eqref{eq_ibp_uuu} to each summand $I(k_1,k_3)$ and obtain
	\begin{align*}
		\abs{I(k_1,k_3)}
		\lesssim
		\abs{B(k_1,k_3)}
		+
		\abs{I_1 (k_1,k_2)}
		+
		\abs{I_2 (k_1,k_2)}
		+
		\abs{I_{12} (k_1,k_2)}
		+
		\abs{I_3 (k_1,k_3)},
	\end{align*}
	where the terms above are given by
	\begin{align*}
		B(k_1,k_3)
		&:=
		2^{k_12r}\sup_{t}\sum_{\substack{k_2\\ j\in[3]}}\left[\sum_{\xi_{123}=0}\sigma_j(\xi_1,\xi_2,\xi_3)\ftxh{u}(\xi_1)\ftxh{u}(\xi_2)\ftxh{u}(\xi_3)\right]_0^t,
		\\
		I_1(k_1,k_3)
		&:=
		2^{k_12r}\sup_{t}\sum_{\substack{k_2\\j\in[3]}}\int_0^{t}\sum_{\substack{\xi_{ab23}=0\\ \xi_a\sim \xi_b}} \frac{\sigma_j}{\Omega}(\xi_{ab},\xi_2,\xi_3)\xi_{ab}\ftxh{u}(\xi_a)\ftxh{u}(\xi_b)\ftxh{u}(\xi_2)\ftxh{u}(\xi_3),
		\\
		I_2(k_1,k_3)
		&:=
		2^{k_12r}\sup_{t}\sum_{\substack{k_2\\j\in[3]}}\int_0^{t}\sum_{\substack{\xi_{ab23}=0\\ \xi_a\sim \xi_b}} \frac{\sigma_j}{\Omega}(\xi_1,\xi_{ab},\xi_3)\xi_{ab}\ftxh{u}(\xi_1)\ftxh{u}(\xi_a)\ftxh{u}(\xi_b)\ftxh{u}(\xi_3),
		\\
		I_{12}(k_1,k_3)
		&:=
		2^{k_12r}\sup_{t}\sum_{\substack{k_2\\j\in[3]}}\int_0^{t}\sum_{\substack{\xi_{ab23}=0\\ \xi_a\not\sim\xi_b}} \left[\frac{\sigma_j}{\Omega}(\xi_{ab},\xi_2,\xi_3)\xi_{ab}+\frac{\sigma_j}{\Omega}(\xi_2,\xi_{ab},\xi_3)\xi_{ab}\right]\ftxh{u}(\xi_a)\ftxh{u}(\xi_b)\ftxh{u}(\xi_2)\ftxh{u}(\xi_3),
		\\
		I_3(k_1,k_3)
		&:=
		2^{k_12r}\sup_{t}\sum_{\substack{k_2\\j\in[3]}}\int_0^{t}\sum_{\xi_{12ab}=0}\frac{\sigma_j}{\Omega}(\xi_1,\xi_2,\xi_{ab})\xi_{ab}\ftxh{u}(\xi_1)\ftxh{u}(\xi_2)\ftxh{u}(\xi_a)\ftxh{u}(\xi_b).
	\end{align*}
	Here, $B$ stands for boundary term and the subscript $i$ of $I$ corresponds to the variable $\xi_i$ being split into the sum $\xi_a+\xi_b$. Without specifically mentioning it, we can assume $\xi_i\neq 0$ for all $i\in[3]$. Indeed, any summand in $I(k_1,k_3)$ with $\xi_i=0$ for some $i\in[3]$ vanishes. This follows immediately from the fact that all $\spfc^s_T$-functions have mean zero. In particular, \eqref{eq_est_res} guarantees that the resonance function $\Omega$ does not vanish in any of the expressions above.
	
	We begin by estimating $B$. Here, we necessarily have $k_2\sim k_1$ and, similarly to the proof of Lemma \ref{lem_est_ert_ll}, conclude
	\begin{align*}
		\abs{B(k_1,k_3)}
		&\lesssim
		\sum_{k_2}2^{k_1(2r-\alpha)}2^{k_3/2}
		\norm{u_{k_1}}{\lebL^2_T\lebL^\infty}\norm{u_{k_2}}{\lebL^2_T\lebL^\infty}\norm{u_{k_3}}{\lebL^2_T\lebL^\infty}.
	\end{align*}
	In order to bound the terms $I_1$, $I_2$, $I_{12}$ and $I_3$, we localize the variables $\xi_a$ and $\xi_b$ to dyadic frequency ranges, i.e. we insert the factor $1=\sum_{k_a,k_b}\chi_{k_a}(\xi_a)\chi_{k_b}(\xi_b)$.
	Using the bound $\abs{\frac{\sigma_j}{\Omega}(\xi_{ab},\xi_2,\xi_3)\xi_{ab}}\lesssim 2^{k_1(1-\alpha)}$ as well as Lemma \ref{lem_i4}, $I_1$ can be estimated by
	\begin{align*}
		\abs{I_1(k_1,k_3)}
		\lesssim
		T^d\sum_{k_a\sim k_b\gtrsim k_2}2^{k_2(2r+3/2-\alpha)}2^{k_3/2}
		\prod_{j\in\{a,b,2,3\}}\norm{u_{k_j}}{\spfb^{k_j}_T}.
	\end{align*}
	Similarly, $I_2$ can be handled with the help of $\abs{\frac{\sigma_j}{\Omega}(\xi_1,\xi_{ab},\xi_3)\xi_{ab}}\lesssim 2^{k_1(1-\alpha)}$ and Lemma \ref{lem_i4}. We obtain
	\begin{align*}
		\abs{I_2(k_1,k_3)}
		\lesssim
		T^d\sum_{k_a\sim k_b\gtrsim k_2}2^{k_2(2r+3/2-\alpha)}2^{k_3/2}
		\prod_{j\in\{1,a,b,3\}}\norm{u_{k_j}}{\spfb^{k_j}_T}.
	\end{align*}
	Next, we bound $I_{12}$. Firstly, consider the case $k_1\sim k_a\gg k_b$. A direct estimate of the modulus of the symbol $\frac{\sigma_j}{\Omega}$ yields the bound $2^{k_1(1-\alpha)}$, which is insufficient in this case. Instead, we benefit from cancellations in the symbol since the rest of the integrand -- that is the factor $\hat{u}(\xi_1)\hat{u}(\xi_2)\hat{u}(\xi_3)\hat{u}(\xi_4)$ -- is symmetric in the high-frequency variables $\xi_a$ and $\xi_2$. Indeed, this symmetry yields
	\begin{align*}
		&\sum_{\xi_{ab23}=0} \left[\frac{\sigma_j}{\Omega}(\xi_{ab},\xi_2,\xi_3)\xi_{ab}+\frac{\sigma_j}{\Omega}(\xi_2,\xi_{ab},\xi_3)\xi_{ab}\right]\ftxh{u}(\xi_2)\ftxh{u}(\xi_3)\ftxh{u}(\xi_a)\ftxh{u}(\xi_b)
		\\=&
		\sum_{\xi_{ab23}=0} \left[\frac{\sigma_j}{\Omega}(\xi_{ab},\xi_2,\xi_3)\xi_{ab}+\frac{\sigma_j}{\Omega}(\xi_a,\xi_{2b},\xi_3)\xi_{2b}\right]\ftxh{u}(\xi_2)\ftxh{u}(\xi_3)\ftxh{u}(\xi_a)\ftxh{u}(\xi_b).
	\end{align*}
	Thus, we can rewrite the new symbol as follows:
	\begin{align*}
		\left[\frac{\sigma_j}{\Omega}(\xi_{ab},\xi_2,\xi_3)\xi_{ab}+\frac{\sigma_j}{\Omega}(\xi_a,\xi_{2b},\xi_3)\xi_{2b}\right]
		&=
		\left[\xi_{ab}+\xi_{2b}\right]\frac{\sigma_j}{\Omega}(\xi_{ab},\xi_2,\xi_3)
		&=:s_1
		\\&+
		\left[\frac{1}{\Omega}(\xi_a,\xi_{2b},\xi_3)-\frac{1}{\Omega}(\xi_{ab},\xi_2,\xi_3)\right]\sigma_j(\xi_{ab},\xi_2,\xi_3)\xi_{2b}
		&=:s_2
		\\&+
		\left[\sigma_j(\xi_a,\xi_{2b},\xi_3)-\sigma_j(\xi_{ab},\xi_2,\xi_3)\right]\frac{1}{\Omega}(\xi_a,\xi_{2b},\xi_3)\xi_{2b}&=:s_3.
	\end{align*}
	Using $\abs{\sigma_j}\lesssim2^{k_3}$,  \eqref{eq_est_res} as well as the equation $\xi_{ab}+\xi_{2b}=\xi_{b}-\xi_3$, we obtain
	\begin{align*}
		\abs{s_1}
		\lesssim
		2^{-k_1\alpha}2^{\max\{k_3,k_b\}}.
	\end{align*}
	The estimate in \eqref{eq_est_res_inv} combined with $\abs{\sigma_j}\lesssim2^{k_3}$ once again yields
	\begin{align*}
		\abs{s_2}
		\lesssim
		2^{-k_1\alpha}2^{\max\{k_3,k_b\}}.
	\end{align*}
	To bound $s_3$, we observe that
	\begin{align*}
		\left[\sigma_1(\xi_a,\xi_{2b},\xi_3)-\sigma_1(\xi_{ab},\xi_2,\xi_3)\right]
		&=
		\xi_3\left[\chi^2_{k_1}(\xi_a)-\chi^2_{k_1}(\xi_{ab})\right]\chi_{k_2}(\xi_{ab})
		\\&+
		\xi_3\chi^2_{k_1}(\xi_{ab})\left[\chi_{k_2}(\xi_{ab})-\chi_{k_2}(\xi_{a})\right],
		\\
		\left[\sigma_2(\xi_a,\xi_{2b},\xi_3)-\sigma_2(\xi_{ab},\xi_2,\xi_3)\right]
		&=
		\xi_{b}\left[\chi^2_{k_1}(\xi_a)-\chi^2_{k_1}(\xi_{2b})\right]\chi_{k_2}(\xi_{2b})\chi_{k_3}(\xi_3)
		\\&+
		\xi_{2}\left[\chi^2_{k_1}(\xi_a)-\chi^2_{k_1}(\xi_{2b})\right]\left[\chi_{k_2}(\xi_{2b})-\chi_{k_2}(\xi_2)\right]\chi_{k_3}(\xi_3)
		\\&+
		\xi_{2}\left[\chi^2_{k_1}(\xi_a)-\chi^2_{k_1}(\xi_{2b})-\chi^2_{k_1}(\xi_{ab})+\chi^2_{k_1}(\xi_2)\right]\chi_{k_2}(\xi_2)\chi_{k_3}(\xi_3),
		\\
		\left[\sigma_3(\xi_a,\xi_{2b},\xi_3)-\sigma_3(\xi_{ab},\xi_2,\xi_3)\right]
		&=
		\xi_{b}\chi^2_{k_1}(\xi_{2b})\left[\chi_{k_2}(\xi_{2b})-\chi_{k_2}(\xi_a)\right]\chi_{k_3}(\xi_3)
		\\&+
		\xi_{2}\left[\chi^2_{k_1}(\xi_{2b})-\chi^2_{k_1}(\xi_2)\right]\left[\chi_{k_2}(\xi_{2b})-\chi_{k_2}(\xi_a)\right]\chi_{k_3}(\xi_3)
		\\&+
		\xi_{2}\chi^2_{k_1}(\xi_2)\chi_{k_3}(\xi_3)\left[\chi_{k_2}(\xi_{2b})-\chi_{k_2}(\xi_a)-\chi_{k_2}(\xi_2)+\chi_{k_2}(\xi_{ab})\right]
	\end{align*}
	hold. Above, each summand on the right-hand side has modulus bounded by $2^{-k_1}2^{k_3}2^{k_b}$. Using \eqref{eq_est_res}, we obtain
	\begin{align*}
		\abs{s_3}
		\lesssim
		2^{-k_1\alpha}2^{\max\{k_3,k_b\}}.
	\end{align*}
	Consequently, the symbol $s_1+s_2+s_3$ has modulus bounded by $2^{-\alpha k_1}2^{\max\{k_3,k_b\}}$ and after localizing $\xi_a$ and $\xi_b$ to dyadic frequency ranges, an application of Lemma \ref{lem_i4} yields
	\begin{align*}
		\abs{I_{12}(k_1,k_3)}
		\lesssim
		T^d\sum_{k_2, k_a, k_b}
		\,
		\prod_{j\in\{a,b,2,3\}}\norm{u_{k_j}}{\spfb^{k_j}_T}
		\times
		\begin{cases}
			2^{k_2(2r-\alpha)}2^{\max\{k_3,k_b\}}2^{k_3/2}2^{k_b/2}
			&\text{if }
			k_1\sim k_a\gg k_b,
			\\
			2^{k_2(2r-\alpha)}2^{\max\{k_3,k_b\}}2^{k_3/2}2^{k_b/2}
			&\text{if }
			k_1\sim k_b\gg k_a,
		\end{cases}
	\end{align*}
	where the second estimate follows by the same arguments.
	
	Finally, let us bound $I_3$. Making use of Lemma \ref{lem_i4} and the bound $\abs{\frac{\sigma_j}{\Omega}(\xi_1,\xi_2,\xi_{ab})\xi_{ab}}\lesssim2^{-k_1\alpha}2^{k_3}$, it follows
	\begin{align*}
		\abs{I_3(k_1,k_3)}
		\lesssim
		T^d\sum_{k_a, k_b, k_2}
		\,\prod_{j\in\{1,2,a,b\}}\norm{u_{k_j}}{\spfb^{k_j}_T}
		\times
		\begin{cases}
			2^{k_1(2r-\alpha)}2^{k_3^*/2}2^{k_4^*3/2}
			&\text{if }
			k_a \sim k_b \gg k_3,
			\\
			2^{k_1(2r-\alpha)}2^{k_a3/2}2^{k_b/2}
			&\text{if }
			k_3\sim k_a \gg k_b,
			\\
			2^{k_1(2r-\alpha)}2^{k_b3/2}2^{k_a/2}
			&\text{if }
			k_3\sim k_b \gg k_a,
		\end{cases}
	\end{align*}
	where $k_3^*$ (resp. $k_4^*$) denotes the third (resp. fourth) largest number of $k_1$, $k_2$, $k_a$ and $k_b$.
	
	Now, we sum over the bounds of $B$, $I_1$, $I_2$, $I_{12}$ and $I_3$ in $k_1$ and $k_3$ and additionally invoke \eqref{eq_est_emb_spfc} for the bound of $B$. This concludes the proof for every $c\in(0,\alpha)$.
\end{proof}

Using the estimates proved in Lemmata \ref{lem_est_ert_ll} and \ref{lem_est_ert_hl}, we obtain a bound for the modulus of \eqref{eq_dif_ert}, which is given by
\begin{align*}
	\sum_{k_1\sim k_2\gg k_3}2^{k_12r}\sup_{t}\int_0^{t}\sum_{\xi_{123}=0}i\xi_1\chi_{k_1}^2(\xi_1)\chi_{k_2}(\xi_2)\chi_{k_3}(\xi_3)\ftxh{u}(\xi_1)\ftxh{u}(\xi_2)\ftxh{u}(\xi_3).
\end{align*}
To complete the proof of estimate \eqref{est_ert}, it remains to deduce appropriate bounds for the term above for any other possible restrictions on $k_1$, $k_2$ and $k_3$.

The bound for the case $k_1\sim k_3\gg k_2$ can be derived similarly to that for $k_1\sim k_2\gg k_3$. To treat the case $k_2\sim k_3\gg k_1$, we can omit the application of \eqref{eq_sig} at the beginning of our calculations since the derivative is already on the low-frequency term. After that, we proceed as before. It remains to analyze the case $k_1\sim k_2\sim k_3$. Again, we can omit the application of \eqref{eq_sig} and argue as before with the minor difference that Corollary \ref{cor_est_res_inv} cannot be used in this case. Hence, we must prove the bound for $I_{12}$ in Lemma \ref{lem_est_ert_hl} differently. However, we can simply use the direct bound on the symbol, which is of size $2^{k_1(1-\alpha)}$.

Thus, the proof of \eqref{est_ert} is complete.
\subsection{Proof of the second energy estimate}\label{ss_for_differences_i}

In this section we will prove estimate \eqref{est_ezt}. Recall that $u_1$ and $u_2$ are smooth solutions of \eqref{eq_PDE} and that we write $v=u_1-u_2$, $w=u_1+u_2$. Observe that $v$ satisfies
\begin{equation}\label{eq_PDE_difference}
	\dt v
	+
	\dx D_x^\alpha v
	=
	\dx(vw).
\end{equation}
Hence, by similar arguments as used in Section \ref{ss_for_solution}, we obtain
\begin{align}\label{eq_h12_diff}
	\begin{split}
		&\norm{v}{\spec^{-1/2}_T}^2
		-
		\norm{v_0}{\lebH^{-1/2}}^2
		\lesssim
		\sum_{k_1, k_3}\underset{=:I\!I(k_1,k_3)}{\underbrace{2^{-k_1}
		\sup_{t}\int_0^{t}\int_\bbt (P_{k_1}^2\dx v)vP_{k_3}w}}
		\\=
		&\sum_{k_1,k_3\leq n}I\!I(k_1,k_3)
		+
		\sum_{k_1\sim k_3> n}I\!I(k_1,k_3)
		+
		\sum_{k_1\ll k_3> n}I\!I(k_1,k_3)
		+
		\sum_{k_3\ll k_1> n}I\!I(k_1,k_3).
	\end{split}
\end{align}
Moreover, we have the following analogue of \eqref{eq_ibp_uuu}:
\begin{align}\label{eq_ibp_vvw}
	\begin{split}
		&-\Omega(\xi_1,\xi_2,\xi_3)\hat{v}(\xi_1)\hat{v}(\xi_2)\hat{w}(\xi_3)
		\\=&\,
		\dt \left[\hat{v}(\xi_1)\hat{v}(\xi_2)\hat{w}(\xi_3)\right]
		+
		i\xi_1\widehat{vw}(\xi_1)\hat{v}(\xi_2)\hat{w}(\xi_3)
		+
		i\xi_2\hat{v}(\xi_1)\widehat{vw}(\xi_2)\hat{w}(\xi_3)
		+
		\sum_{j\in[2]}i\xi_3\hat{v}(\xi_1)\hat{v}(\xi_2)\widehat{u_ju_j}(\xi_3).
	\end{split}
\end{align}
Let us begin by estimating the low-frequency interaction.
\begin{lem}\label{lem_h12_LL}
	Let $s>3/2-\alpha$. Then, we have
	\begin{align*}
		\sum_{k_1,k_3\leq n}\abs{I\!I(k_1,k_3)}
		\lesssim
		T2^{2n}\norm{v}{\spfc^{-1/2}_T}^2\norm{w}{\spfc^s_T}.
	\end{align*}	
\end{lem}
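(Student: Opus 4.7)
The proof should follow the same template as Lemma \ref{lem_est_ert_ll}, the low-frequency analogue from the first energy estimate. The key structural observation is that the constraint $\xi_{123} = 0$ (forced by spatial integration over $\bbt$) together with the low-frequency assumption $k_1, k_3 \leq n$ automatically restricts the frequency $\xi_2$ of the middle factor $v$ to $|\xi_2| \lesssim 2^n$. So no resonance identity and no integration by parts via \eqref{eq_ibp_vvw} are needed here; a brute-force trilinear estimate suffices, exactly as in the first lemma.

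First I would crudely bound $|\int_0^t \cdots dt'| \leq T \sup_{t' \in [0,T]} |\int_\bbt \cdots dx|$, trading the time integration for a factor of $T$. Next, I would apply Plancherel to rewrite the spatial integral as
\begin{align*}
	\int_\bbt (P_{k_1}^2 \dx v)\, v\, (P_{k_3} w)\, dx
	=
	\sum_{\xi_{123}=0} i\xi_1 \chi_{k_1}^2(\xi_1) \chi_{k_3}(\xi_3) \ftxh{v}(\xi_1) \ftxh{v}(\xi_2) \ftxh{w}(\xi_3),
\end{align*}
decompose $v = \sum_{k_2} v_{k_2}$ into Littlewood-Paley pieces, and observe that only $k_2 \leq n + O(1)$ contribute due to the convolution constraint. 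Then a standard trilinear bound -- obtained by Cauchy-Schwarz in $\xi_1$ and by placing the smallest-support variable $\xi_3$ in $\ell^1$ (which costs $2^{k_3/2}$ by a Bernstein-type estimate on the Fourier side) -- gives
\begin{align*}
	\Big|\sum_{\xi_{123}=0} i\xi_1 \chi_{k_1}^2 \chi_{k_3} \ftxh{v}_{k_1} \ftxh{v}_{k_2} \ftxh{w}_{k_3}\Big|
	\lesssim
	2^{k_1} 2^{k_3/2} \norm{v_{k_1}}{\lebL^2} \norm{v_{k_2}}{\lebL^2} \norm{w_{k_3}}{\lebL^2},
\end{align*}
and the factor $2^{k_1}$ is exactly cancelled by the $2^{-k_1}$ in the definition of $I\!I(k_1,k_3)$.

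Finally, using the embedding \eqref{eq_est_emb_spfc} to pass to $\lebL^\infty_T$-norms, I would sum over $k_1, k_2 \leq n + O(1)$ and $k_3 \leq n$. Cauchy-Schwarz on the dyadic indices gives $\sum_{k_i \leq n} \norm{v_{k_i}}{\lebL^\infty_T \lebL^2} \lesssim 2^{n/2} \norm{v}{\spfc^{-1/2}_T}$ for $i = 1, 2$, while $\sum_{k_3 \leq n} 2^{k_3/2} \norm{w_{k_3}}{\lebL^\infty_T \lebL^2} \lesssim \norm{w}{\spfc^s_T}$ since $s > 3/2 - \alpha > 1/2$ makes the geometric series $\sum 2^{k_3(1 - 2s)}$ summable. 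Multiplying these together yields the bound $T \cdot 2^n \norm{v}{\spfc^{-1/2}_T}^2 \norm{w}{\spfc^s_T}$, which is well within the claimed $T \cdot 2^{2n} \norm{v}{\spfc^{-1/2}_T}^2 \norm{w}{\spfc^s_T}$. I anticipate no serious obstacle; the argument is essentially mechanical, and the $2^{2n}$ in the target is kept generous (presumably to match the bound from Lemma \ref{lem_est_ert_ll} uniformly in the bootstrap).
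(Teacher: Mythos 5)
Your proposal is correct and follows essentially the same route as the paper: decompose $v$ into Littlewood-Paley pieces $v_{k_2}$, note the convolution constraint forces $k_2\lesssim n$, trade the time integral for a factor $T$, apply a brute-force trilinear Cauchy--Schwarz bound, and sum using the embedding \eqref{eq_est_emb_spfc}. One cosmetic imprecision: you describe placing the ``smallest-support variable $\xi_3$'' in $\ell^1$, but in this regime $k_3$ need not be the minimum of $k_1,k_2,k_3$ (the paper's bound actually reads $2^{k_3^*/2}$ with $k_3^*=\min$); however $2^{k_3/2}$ is still a legitimate---merely suboptimal---trilinear factor, and since $s>1/2$ makes $\sum_{k_3}2^{k_3(1/2-s)}$ summable, your final bound $T2^n$ still fits comfortably inside the claimed $T2^{2n}$.
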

\begin{proof}
	Writing $v=\sum_{k_2}v_{k_2}$, we notice that $k_2\lesssim n$ holds. Then, the claim follows from the estimate
	\begin{align*}
		\abs{I\!I(k_1,k_3)}
		\lesssim
		\sum_{k_2}2^{-k_1}\sup_{t}\int_0^{t}\int_\bbt \abs{P_{k_1}^2\dx vP_{k_2}vP_{k_3}w}
		\lesssim
		T\sum_{k_2} 2^{k_3^*/2}\norm{v_{k_1}}{\lebL^\infty_T\lebL^2}\norm{v_{k_2}}{\lebL^\infty_T\lebL^2}\norm{w_{k_3}}{\lebL^\infty_T\lebL^2}
	\end{align*}
	and the estimate \eqref{eq_est_emb_spfc}.
\end{proof}
Next, we bound those summands, in which one factor $v$ is localized to a high frequency and the other factor $v$ is localized to a low frequency.
\begin{lem}\label{lem_h12_HH}
	Let $s>3/2-\alpha$. Then, there exist $c,d>0$ such that we have
	\begin{equation*}
		\sum_{k_1\sim k_3>n}\abs{I\!I(k_1,k_3)}
		\lesssim
		2^{-nc}\norm{v}{\spfc^{-1/2}_T}^2\norm{w}{\spfc^s_T}
		+
		T^d\norm{v}{\spfc^{-1/2}_T}^2\left(\norm{u_1}{\spfc^s_T}^2+\norm{u_2}{\spfc^s_T}^2\right).
	\end{equation*}
\end{lem}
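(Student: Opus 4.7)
The plan is to mirror the proof of the high-frequency half of \eqref{est_ert} (Lemma \ref{lem_est_ert_hl}), now using the identity \eqref{eq_ibp_vvw} in place of \eqref{eq_ibp_uuu}. First I would dyadically decompose the middle factor $v=\sum_{k_2} P_{k_2}v$; since $\xi_{123}=0$ forces $k_2\lesssim k_1\sim k_3$, the sub-cases to consider are $k_2\ll k_1\sim k_3$ and $k_1\sim k_2\sim k_3$. Writing $\sigma(\xi_1,\xi_2,\xi_3)=i\xi_1\chi_{k_1}^2(\xi_1)\chi_{k_2}(\xi_2)\chi_{k_3}(\xi_3)$ and using \eqref{eq_ibp_vvw} to trade the time integration against the resonance factor $\Omega^{-1}$, the quantity $I\!I(k_1,k_3)$ splits into a boundary contribution $B(k_1,k_3)$ coming from $\dt[\hat v\hat v\hat w]$ and three quadrilinear terms $I\!I_1$, $I\!I_2$, $I\!I_3$ coming respectively from the $\widehat{vw}$ corrections on $\xi_1$ and $\xi_2$ and from the $\widehat{u_j u_j}$ corrections on $\xi_3$. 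The estimate \eqref{eq_est_res} gives $|\Omega^{-1}|\lesssim 2^{-\alpha k_1}2^{-k_2}$ throughout (in the all-high sub-case $k_2\sim k_1$, so both bounds collapse to $2^{-(\alpha+1)k_1}$).

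For the boundary term, combining $|\sigma/\Omega|\lesssim 2^{k_1(1-\alpha)}2^{-k_2}$ with Hölder in time, Bernstein in $x$ and the embedding \eqref{eq_est_emb_spfc} yields a bound by $2^{k_1(1/2-\alpha)}2^{-k_2/2}\norm{v_{k_1}}{\lebL^\infty_T\lebL^2}\norm{v_{k_2}}{\lebL^\infty_T\lebL^2}\norm{w_{k_3}}{\lebL^\infty_T\lebL^2}$ prior to the global $2^{-k_1}$ weight. After absorbing the Sobolev weights ($2^{k_j/2}$ for each $v$-factor, $2^{-k_3 s}$ for the $w$-factor) and summing in $k_2$ via $\sum 2^{-k_2/2}\lesssim 1$, the net exponent in $k_1\sim k_3$ becomes $1/2-\alpha-s$, which is strictly less than $-1$ precisely when $s>3/2-\alpha$. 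Summing over $k_1>n$ therefore produces the announced $2^{-cn}\norm{v}{\spfc^{-1/2}_T}^2\norm{w}{\spfc^s_T}$. For the three quadrilinear terms I would split the relevant convolution variable as $\xi_a+\xi_b$, localize dyadically, and apply Lemma \ref{lem_i4}. The direct symbol bound $|(\sigma/\Omega)\xi_{ab}|\lesssim 2^{k_1(1-\alpha)}2^{-k_2}$ handles the non-symmetric regimes. In the symmetric regime $k_a\sim k_b\gtrsim k_1$, the symmetry of the integrand in the two $v$-variables allows rewriting $\tfrac{\sigma}{\Omega}(\xi_{ab},\xi_2,\xi_3)\xi_{ab}+\tfrac{\sigma}{\Omega}(\xi_2,\xi_{ab},\xi_3)\xi_{ab}$ as $\tfrac{\sigma}{\Omega}(\xi_{ab},\xi_2,\xi_3)\xi_{ab}+\tfrac{\sigma}{\Omega}(\xi_a,\xi_{2b},\xi_3)\xi_{2b}$ and then decomposing this into three pieces $s_1+s_2+s_3$ exactly as in Lemma \ref{lem_est_ert_hl}; Lemma \ref{cor_est_res_inv} provides the extra gain of $2^{-\alpha k_1}$ required for the $s_2$-piece. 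The outputs from $I\!I_1,I\!I_2$ then contribute $T^d\norm{v}{\spfc^{-1/2}_T}^2\norm{w}{\spfc^s_T}^2$ and those from the two branches of $I\!I_3$ contribute $T^d\norm{v}{\spfc^{-1/2}_T}^2\norm{u_j}{\spfc^s_T}^2$ for $j\in[2]$, matching the statement.

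The main obstacle is the asymmetric regularity situation. Since both $v$-factors live in $\spfc^{-1/2}_T$ while $w$ only lives in $\spfc^s_T$ with $s>3/2-\alpha$, only the $v$-$v$ symmetry is available for the delicate symbol manipulation, and every rebalancing of derivatives is constrained by the comparatively low regularity of $w$. In the regime $k_1\sim k_3>n$, where the derivative sits on one $v$-factor and the other high frequency is on $w$, a direct Hölder argument would miss summability by exactly $\alpha$ units; it is only after absorbing $\Omega^{-1}$ through \eqref{eq_ibp_vvw} that the effective exponent $1/2-\alpha-s$ drops below $-1$, allowing the high-frequency tail to be summed to the required $2^{-cn}$. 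Handling the symmetric sub-case $k_a\sim k_b\gtrsim k_1$ in $I\!I_1,I\!I_2$ will be the most delicate point, as the commutator cancellation of \eqref{eq_est_res_inv} is needed not just for an $H^s$-level estimate but to balance two $H^{-1/2}$-factors against an $H^s$-factor without overshooting.
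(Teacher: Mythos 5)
Your overall skeleton is right and matches the paper's: apply the integration-by-parts identity \eqref{eq_ibp_vvw}, split $I\!I(k_1,k_3)$ into the boundary term $B\!B$ and the three quadrilinear terms $I\!I_1$, $I\!I_2$, $I\!I_3$, localize the new variables $\xi_a+\xi_b$, apply Lemma \ref{lem_i4}, and use \eqref{eq_est_res} to control $\Omega^{-1}$. Your boundary-term bookkeeping (net exponent $1/2-\alpha-s<-1$, hence the $2^{-cn}$ tail) is also the same as the paper's.

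However, the part where you invoke the symmetrization $\tfrac{\sigma}{\Omega}(\xi_{ab},\xi_2,\xi_3)\xi_{ab}+\tfrac{\sigma}{\Omega}(\xi_2,\xi_{ab},\xi_3)\xi_{ab}\mapsto\tfrac{\sigma}{\Omega}(\xi_{ab},\xi_2,\xi_3)\xi_{ab}+\tfrac{\sigma}{\Omega}(\xi_a,\xi_{2b},\xi_3)\xi_{2b}$ and the commutator estimate \eqref{eq_est_res_inv} is both unnecessary and, as stated, inapplicable in this lemma. First, in the regime $k_1\sim k_3$ you are treating here, the derivative $\xi_1$ is already on a high frequency, so no symbol-level commutator gain is needed: the direct bounds $\abs{\Omega^{-1}(\xi_{ab},\xi_2,\xi_3)\xi_{ab}^2}\lesssim 2^{k_3(2-\alpha)}2^{-k_2}$, $\abs{\Omega^{-1}(\xi_1,\xi_{ab},\xi_3)\xi_1\xi_2}\lesssim 2^{k_1(1-\alpha)}$, $\abs{\Omega^{-1}(\xi_1,\xi_2,\xi_{ab})\xi_1\xi_3}\lesssim 2^{k_1(2-\alpha)}2^{-k_2}$ combined with Lemma \ref{lem_i4} already give summable exponents under $s>3/2-\alpha$; this is exactly what the paper does. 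Second, you place the cancellation in the regime $k_a\sim k_b\gtrsim k_1$, but there the two split factors in $I\!I_1$ are $\hat{v}(\xi_a)\hat{w}(\xi_b)$ — one $v$ and one $w$ — so the integrand is not symmetric in $\xi_a$ and $\xi_b$ and the symmetrization cannot be performed. The $v$-$v$ symmetry that is genuinely available (between $\xi_a$ and $\xi_2$) pairs a high with a low frequency here and gives no useful cancellation. Third, the decomposition $s_1+s_2+s_3$ in Lemma \ref{lem_est_ert_hl} is used in the regime $k_1\sim k_a\gg k_b$, not $k_a\sim k_b\gtrsim k_1$. The place where the commutator argument actually is indispensable is Lemma \ref{lem_h12_HL} (the case $k_3\ll k_1>n$), where the derivative must be shifted to the low frequency and the subsequent split $\xi_1=\xi_a+\xi_b$ creates two high-frequency $v$-factors $\hat{v}(\xi_a),\hat{v}(\xi_2)$; there the direct bound really does miss by a factor $2^{k_1(1-\alpha)}$ and \eqref{eq_est_res_inv} saves the day. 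Remove the symmetrization step, keep the direct symbol bounds for all sub-regimes of $I\!I_1,I\!I_2,I\!I_3$, and your proof of this particular lemma becomes both correct and simpler.
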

\begin{proof}
	We apply \eqref{eq_ibp_vvw} to each summand $I\!I(k_1,k_3)$ and obtain
	\begin{align*}
		\abs{I\!I(k_1,k_3)}
		\leq
		\abs{B\!B(k_1,k_3)}
		+
		\abs{I\!I_1(k_1,k_3)}
		+
		\abs{I\!I_2(k_1,k_3)}
		+
		\abs{I\!I_3(k_1,k_3)},
	\end{align*}
	where the terms on the right-hand side are defined by
	\begin{align*}
		B\!B(k_1,k_3)
		&:=
		2^{-k_1}
		\sup_{t}\sum_{k_2}
		\left[ \sum_{\xi_{123}=0}\frac{i\xi_1\chi_{k_1}^2(\xi_1)\chi_{k_2}(\xi_2)\chi_{k_3}(\xi_3)}{\Omega(\xi_1,\xi_2,\xi_3)}\hat{v}(\xi_1)\hat{v}(\xi_2)\hat{w}(\xi_3)\right]_0^t,
		\\
		I\!I_1(k_1,k_3)
		&:=
		2^{-k_1}\sup_t\sum_{k_2}\int_0^t
		\sum_{\xi_{ab23}=0}
		\frac{(-i\xi_{ab})^2\chi_{k_1}(\xi_{ab})\chi_{k_2}(\xi_2)\chi_{k_3}(\xi_3)}{\Omega(\xi_{ab},\xi_2,\xi_3)}
		\ftxh{v}(\xi_a)\ftxh{w}(\xi_b)\ftxh{v}(\xi_2)\ftxh{w}(\xi_3),
		\\
		I\!I_2(k_1,k_3)
		&:=
		2^{-k_1}\sup_t\sum_{k_2}\int_0^t
		\sum_{\xi_{1ab3}=0}
		\frac{(-i\xi_1)(-i\xi_2)\chi_{k_1}(\xi_1)\chi_{k_2}(\xi_{ab})\chi_{k_3}(\xi_3)}{\Omega(\xi_1,\xi_{ab},\xi_3)}
		\ftxh{v}(\xi_1)\ftxh{v}(\xi_a)\ftxh{w}(\xi_b)\ftxh{w}(\xi_3),
		\\
		I\!I_3(k_1,k_3)
		&:=
		2^{-k_1}\sup_t\sum_{\substack{k_2\\j\in[2]}}\int_0^t
		\sum_{\xi_{12ab}=0}
		\frac{(-i\xi_1)(-i\xi_3)\chi_{k_1}(\xi_1)\chi_{k_2}(\xi_2)\chi_{k_3}(\xi_{ab})}{\Omega(\xi_1,\xi_2,\xi_{ab})}\ftxh{v}(\xi_1)\ftxh{v}(\xi_2)\ftxh{u}_j(\xi_a)\ftxh{u}_j(\xi_b).
	\end{align*}

	The term $B\!B$ can be estimated similarly to the boundary term $B$ in Lemma \ref{lem_est_ert_hl}. Indeed, we must have $k_2\lesssim k_1$, which -- together with \eqref{eq_est_res} -- leads to
	\begin{align*}
		\abs{B\!B(k_1,k_3)}
		&\lesssim
		2^{-k_1\alpha}\sum_{k_2}2^{-k_2/2}\norm{v_{k_1}}{\lebL^\infty_T\lebL^2}\norm{v_{k_2}}{\lebL^\infty_T\lebL^2}\norm{w_{k_3}}{\lebL^\infty_T\lebL^2}.
	\end{align*}
	Let us continue with bounding the summands $I\!I_1$, $I\!I_2$ and $I\!I_3$.
	Analogously to the proof of \eqref{est_ert}, we insert the factor $1=\sum_{k_a,k_b}\chi_{k_a}(\xi_a)\chi_{k_b}(\xi_b)$ into each of the terms $I\!I_1$, $I\!I_2$ and $I\!I_3$ in order to localize the variables $\xi_a$ and $\xi_b$.
	Using the bound $\smash{\abs{\Omega^{-1}(\xi_{ab},\xi_2,\xi_3)(-i\xi_{ab})^2}\lesssim2^{k_3(2-\alpha)}2^{-k_2}}$ and applying Lemma \ref{lem_i4}, we obtain
	\begin{align*}
		\abs{I\!I_1(k_1,k_3)}
		\lesssim
		T^d\!
		\sum_{k_2,k_a,k_b}\!
		\norm{v_{k_a}}{\spfb^{k_a}_T}\!
		\norm{w_{k_b}}{\spfb^{k_b}_T}\!
		\norm{v_{k_2}}{\spfb^{k_2}_T}\!
		\norm{w_{k_3}}{\spfb^{k_3}_T}\!
		\times
		\begin{cases}
			2^{k_3(3/2-\alpha)}2^{-k_2/2}
			&\text{if }k_a\sim k_b\geq k_1,
			\\
			2^{k_3(1-\alpha)}2^{-k_2/2}2^{k_b/2}
			&\text{if }k_a\sim k_1\geq k_b,
			\\
			2^{k_3(1-\alpha)}2^{-k_2/2}2^{k_a/2}
			&\text{if }k_b\sim k_1\geq k_a.
		\end{cases}
	\end{align*}
	Similarly, from $\abs{\Omega^{-1}(\xi_1,\xi_{ab},\xi_3)(-i\xi_1)(-i\xi_2)}\lesssim 2^{k_1(1-\alpha)}$ and Lemma \ref{lem_i4}, we conclude
	\begin{align*}
		\abs{I\!I_2(k_1,k_3)}
		\lesssim
		T^d\!
		\sum_{k_2,k_a,k_b}\!
		\norm{v_{k_1}}{\spfb^{k_1}_T}\!
		\norm{v_{k_a}}{\spfb^{k_a}_T}\!
		\norm{w_{k_b}}{\spfb^{k_b}_T}\!
		\norm{w_{k_3}}{\spfb^{k_3}_T}\!
		\times
		\begin{cases}
			2^{k_1(1-\alpha)}
			&\text{if }k_a\sim k_b\geq k_1,
			\\
			2^{-k_1\alpha}2^{k_a/2}2^{k_b/2}
			&\text{else},
		\end{cases}
	\end{align*}
	whereas $\abs{\Omega^{-1}(\xi_1,\xi_2,\xi_{ab})(-i\xi_1)(-i\xi_3)}\lesssim 2^{k_1(2-\alpha)}2^{-k_2}$ and Lemma \ref{lem_i4} imply
	\begin{align*}
		\abs{I\!I_3(k_1,k_3)}
		\lesssim
		T^d\!
		\sum_{\substack{k_2,k_a,k_b \\ j\in\{1,2\}}}\!
		\norm{v_{k_1}}{\spfb^{k_1}_T}\!
		\norm{v_{k_2}}{\spfb^{k_2}_T}\!
		\norm{u_{j,k_a}}{\spfb^{k_a}_T}\!
		\norm{u_{j,k_b}}{\spfb^{k_b}_T}\!
		\times
		\begin{cases}
			2^{k_1(3/2-\alpha)}2^{-k_2/2}
			&\text{if }k_a\sim k_b\geq k_3,
			\\
			2^{k_1(1-\alpha)}2^{k_b/2}2^{-k_2/2}
			&\text{if }k_1\sim k_a\geq k_b,
			\\
			2^{k_1(1-\alpha)}2^{k_a/2}2^{-k_2/2}
			&\text{if }k_1\sim k_b\geq k_a.
		\end{cases}
	\end{align*}
	The claim follows after summation of the obtained bounds in $k_1$ and $k_3$.
\end{proof}
\begin{lem}\label{lem_h12_LH}
	Let $s>3/2-\alpha$. Then, there exist $c,d>0$ such that we have
	\begin{align*}
		\sum_{k_1\ll k_3>n}\abs{I\!I(k_1,k_3)}
		\lesssim
		2^{-nc}\norm{v}{\spfc^{-1/2}_T}^2\norm{w}{\spfc^s_T}
		+
		T\norm{v}{\spfc^{-1/2}_T}^2\left(\norm{u_1}{\spfc^s_T}^2+\norm{u_2}{\spfc^s_T}^2\right).
	\end{align*}
\end{lem}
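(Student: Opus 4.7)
The regime $k_1 \ll k_3 > n$, combined with the convolution constraint $\xi_1+\xi_2+\xi_3=0$ arising inside $II(k_1,k_3)$, forces $k_2 \sim k_3$. Hence $\xi_1^* \sim 2^{k_3}$ and $\xi_3^* \sim 2^{k_1}$, and by \eqref{eq_est_res} the resonance function satisfies $|\Omega(\xi_1,\xi_2,\xi_3)| \sim 2^{\alpha k_3} 2^{k_1}$, so in particular it does not vanish. Following the template of Lemma \ref{lem_h12_HH}, I would apply \eqref{eq_ibp_vvw} inside each summand $II(k_1,k_3)$ in order to integrate by parts in time. This produces a boundary term $BB(k_1,k_3)$ together with three interior contributions $II_1, II_2, II_3$ corresponding to the three cubic terms on the right of \eqref{eq_ibp_vvw}; as in the earlier lemmas, the mean-zero property of the function spaces allows us to restrict to $\xi_i \neq 0$ throughout.

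For $BB(k_1,k_3)$ I would use the symbol bound $|\xi_1 \chi_{k_1}^2 / \Omega| \lesssim 2^{-\alpha k_3}$, gain the factor $2^{k_1/2}$ from the lowest-frequency factor in the cubic convolution (Young/Hölder on $\mathbb Z$), and invoke \eqref{eq_est_emb_spfc}. Dyadic summation with $k_1 \ll k_3$ and $k_3 > n$ produces decay $2^{-k_3(s + \alpha - 1/2)}$, which is summable since $s > 3/2-\alpha$ gives $s+\alpha-1/2>1$. Splitting at the threshold $k_3 = n$ extracts the prefactor $2^{-cn}$, yielding the contribution $2^{-cn}\|v\|_{\spfc^{-1/2}_T}^2 \|w\|_{\spfc^s_T}$ from $BB$.

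For $II_1$ and $II_2$ I would dyadically localize the new pair $(\xi_a, \xi_b)$ via $1 = \sum_{k_a,k_b}\chi_{k_a}(\xi_a)\chi_{k_b}(\xi_b)$ and apply the quadrilinear Lemma \ref{lem_i4} in each of the standard subcases on $(k_a, k_b)$, exactly as in Lemma \ref{lem_h12_HH}. The pointwise symbol bounds required are $|\xi_{ab}^2/\Omega| \lesssim 2^{k_1-\alpha k_3}$ for $II_1$ and $|\xi_1 \xi_2/\Omega|\lesssim 2^{k_3(1-\alpha)}$ for $II_2$, both reading off directly from the resonance asymptotic. The resulting sums in $k_1, k_2, k_a, k_b, k_3$ converge because of $s > 3/2-\alpha$; the $T^d$ prefactor is the one supplied by Lemma \ref{lem_i4}, while the $2^{-cn}$ prefactor is again produced by the $k_3 > n$ truncation. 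The contribution of $II_1$ and $II_2$ therefore fits into the first summand of the claim.

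For $II_3$ I would deviate from the Lemma \ref{lem_i4} framework in order to produce the $T$ (instead of $T^d$) present in the second summand. Writing $\xi_{12ab}=0$ with $|\xi_1|\sim 2^{k_1}$ and $|\xi_2|, |\xi_{ab}|\sim 2^{k_3}$, the symbol obeys $|\xi_1 \xi_3/\Omega|\lesssim 2^{k_3(1-\alpha)}$. A direct estimate $\int_0^t \le T$, followed by the cubic convolution bound in the low frequency $k_1$ and the embedding \eqref{eq_est_emb_spfc} on each factor, yields a bound of the form $T \cdot 2^{-k_1/2} \cdot 2^{k_3(1-\alpha)} \|v_{k_1}\|_{L^\infty_T L^2} \|v_{k_2}\|_{L^\infty_T L^2} \|P_{k_a}u_j\|_{L^\infty_T L^2} \|P_{k_b}u_j\|_{L^\infty_T L^2}$ after $(k_a,k_b)$-localization; summation in all dyadic indices again converges thanks to $s > 3/2-\alpha$ and gives the second summand $T \|v\|_{\spfc^{-1/2}_T}^2 (\|u_1\|_{\spfc^s_T}^2 + \|u_2\|_{\spfc^s_T}^2)$. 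The main obstacle is the careful bookkeeping of the $(k_a,k_b)$-subcases in $II_1, II_2, II_3$ so that the $2^{-cn}$ factor and the $T$ versus $T^d$ distinction appear in exactly the claimed shapes; however, the geometry is essentially dual to Lemma \ref{lem_h12_HH} and no tool beyond Lemma \ref{lem_i4} and \eqref{eq_est_emb_spfc} is required.
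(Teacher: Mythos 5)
Your overall structure matches the paper's: the paper's own proof of this lemma is literally a single sentence (``we can repeat the proof of the previous lemma since we still have one high- and one low-frequency factor $v$''), i.e.\ one again applies \eqref{eq_ibp_vvw}, estimates the boundary term $B\!B$ by \eqref{eq_est_emb_spfc} together with the pointwise symbol bound, and treats $I\!I_1, I\!I_2, I\!I_3$ by frequency-localizing $(\xi_a,\xi_b)$ and invoking Lemma~\ref{lem_i4}. Your identification of $k_2\sim k_3$, the resonance size $|\Omega|\sim 2^{\alpha k_3}2^{k_1}$, and the symbol bounds for $B\!B$, $I\!I_1$, $I\!I_2$, $I\!I_3$ are all correct, so the core of the argument is sound.

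Two small points of criticism. First, your deviation for $I\!I_3$, designed to produce a factor $T$ rather than $T^d$, is both unnecessary and imprecise. Since $T\in(0,1]$ and $d\in(0,1)$, one has $T\le T^d$, so the $T$ in the statement is in fact a \emph{stronger} claim than $T^d$; given that the downstream estimate \eqref{est_ezt} only uses $T^d$ and the paper's proof is ``repeat Lemma~\ref{lem_h12_HH}'' (which yields $T^d$), the $T$ is most plausibly a typo for $T^d$, and the paper certainly does not perform a separate $L^\infty$-in-time estimate on $I\!I_3$. Moreover, your direct estimate drops the second convolution gain: a four-fold convolution evaluated at the origin gains $2^{k_3^*/2}2^{k_4^*/2}$ over the third- and fourth-largest supports, not merely $2^{k_1/2}$; so your displayed bound $T\cdot 2^{-k_1/2}\cdot 2^{k_3(1-\alpha)}\prod\|\cdot\|_{L^\infty_TL^2}$ is missing a factor on the order of $2^{k_3/2}$. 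Second, the bookkeeping of which summand each piece lands in is off: $I\!I_1$ and $I\!I_2$ contain two copies of $v$ and two copies of $w$, hence produce a quartic expression $T^d\|v\|^2_{\spfc^{-1/2}_T}\|w\|^2_{\spfc^s_T}\lesssim T^d\|v\|^2_{\spfc^{-1/2}_T}(\|u_1\|^2_{\spfc^s_T}+\|u_2\|^2_{\spfc^s_T})$, which belongs in the \emph{second} summand of the claim, not the first (the first summand is cubic). Only the boundary term $B\!B$ contributes to the cubic $2^{-nc}$ part.
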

\begin{proof}
	We can repeat the proof of the previous lemma since we still have one high- and one low-frequency factor $v$.
\end{proof}

It remains to bound those terms, where both high-frequency factors are given by $v$. Here, the first step is to shift the derivative to the low-frequency factor as in the beginning of Section \ref{ss_for_solution}. 

\begin{lem}\label{lem_h12_HL}
	Let $s>3/2-\alpha$. Then, there exist $c,d>0$ such that we have
	\begin{align*}
		\sum_{k_3\ll k_1>n}\abs{I\!I(k_1,k_3)}
		\lesssim
		2^{-nc}\norm{v}{\spfc^{-1/2}_T}^2\norm{w}{\spfc^s_T}
		+
		T^d\norm{v}{\spfc^{-1/2}_T}^2\left(\norm{u_1}{\spfc^s_T}^2+\norm{u_2}{\spfc^s_T}^2\right).
	\end{align*}
\end{lem}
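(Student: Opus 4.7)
The plan is to mirror the high--high--low argument used in Section \ref{ss_for_solution} (namely the $k_1\sim k_2\gg k_3$ contribution to \eqref{est_ert}), with the two high-frequency slots filled by $v$ and the low-frequency slot filled by $w$. The key observation is that in the regime $k_3\ll k_1$, the constraint $\xi_{123}=0$ forces $k_2\sim k_1$ in any nontrivial contribution. Writing $v=\sum_{k_2}v_{k_2}$, this reduces $I\!I(k_1,k_3)$ to an expression with symbol $\sigma(\xi_1,\xi_2,\xi_3)=i\xi_1\chi_{k_1}^2(\xi_1)\chi_{k_2}(\xi_2)\chi_{k_3}(\xi_3)$ of size $\sim 2^{k_1}$. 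Since $v$ is real and appears in both of the first two slots, we may symmetrize over the transposition $\xi_1\leftrightarrow\xi_2$ and use $i\xi_1+i\xi_2=-i\xi_3$ together with the identity
\begin{align*}
\sigma(\xi_1,\xi_2,\xi_3)+\sigma(\xi_2,\xi_1,\xi_3)
&=-i\xi_3\chi_{k_1}^2(\xi_1)\chi_{k_2}(\xi_2)\chi_{k_3}(\xi_3)
+i\xi_2[\chi_{k_1}^2(\xi_2)-\chi_{k_1}^2(\xi_1)]\chi_{k_2}(\xi_2)\chi_{k_3}(\xi_3)\\
&\quad+i\xi_2\chi_{k_1}^2(\xi_1)[\chi_{k_2}(\xi_1)-\chi_{k_2}(\xi_2)]\chi_{k_3}(\xi_3)
\end{align*}
to replace $\sigma$ by a sum $\sigma_1+\sigma_2+\sigma_3$ whose three pieces each satisfy $|\sigma_j(\xi_1,\xi_2,\xi_3)|\lesssim 2^{k_3}$, exactly as in \eqref{eq_sig}. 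This is the crucial step: it trades the derivative loss $2^{k_1}$ for the low-frequency factor $2^{k_3}$.

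Having performed the symmetrization, I apply the resonance identity \eqref{eq_ibp_vvw} to each summand, which produces a boundary term $BB(k_1,k_3)$ together with three integral terms $I\!I_1$, $I\!I_2$ and $I\!I_3$, where $I\!I_j$ corresponds to substituting $\dt\hat v$ or $\dt\hat w$ at the slot $\xi_j$. By \eqref{eq_est_res}, we have $|\Omega(\xi_1,\xi_2,\xi_3)|\sim 2^{k_1\alpha}2^{k_3}$ in our regime, so the division by $\Omega$ produces a gain of $2^{-k_1\alpha}2^{-k_3}$ which combines with $|\sigma_j|\lesssim 2^{k_3}$ to yield an effective symbol of size $2^{-k_1\alpha}$. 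For the boundary term, Hölder's inequality, Bernstein and \eqref{eq_est_emb_spfc} then give
\begin{align*}
|BB(k_1,k_3)|
\lesssim
2^{-k_1\alpha}\sum_{k_2\sim k_1}2^{k_3/2}\norm{v_{k_1}}{\lebL^\infty_T\lebL^2}\norm{v_{k_2}}{\lebL^\infty_T\lebL^2}\norm{w_{k_3}}{\lebL^\infty_T\lebL^2},
\end{align*}
which contributes to the $2^{-nc}$ term after summation over $k_1>n$.

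For $I\!I_1$ and $I\!I_2$, the resubstitution $\dt\hat v=-i\omega\hat v+i\xi\widehat{vw}$ at a high-frequency slot produces a quadrilinear expression in $v,v,w,w$ with symbol of size $|\sigma_j/\Omega|\cdot|\xi_{ab}|\lesssim 2^{-k_1\alpha}2^{k_3}\cdot 2^{k_1}=2^{k_1(1-\alpha)}2^{k_3}$. Splitting the new variables $\xi_a,\xi_b$ into dyadic ranges and applying Lemma \ref{lem_i4} then produces bounds exactly analogous to those for $I_1, I_2$ in the proof of Lemma \ref{lem_est_ert_hl}, but with two factors of $w$ rather than $u$; after the $k_1, k_3$ sum with $s>3/2-\alpha$ these contribute to $T^d\norm{v}{\spfc^{-1/2}_T}^2\norm{w}{\spfc^s_T}^2$. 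For $I\!I_3$, the resubstitution at $\xi_3$ yields $\widehat{u_j u_j}$ in the low slot; the symbol is of size $2^{-k_1\alpha}2^{k_3}\cdot 2^{k_3}$ and Lemma \ref{lem_i4} gives bounds in the three dyadic sub-cases $k_a\sim k_b\geq k_3$, $k_3\sim k_a\geq k_b$ and $k_3\sim k_b\geq k_a$, parallel to the corresponding cases for $I\!I_3$ in Lemma \ref{lem_h12_HH}. After summing over $k_1>n$ and $k_3\ll k_1$, this produces the claimed $T^d\norm{v}{\spfc^{-1/2}_T}^2\sum_j\norm{u_j}{\spfc^s_T}^2$.

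The main obstacle is Step 2, the symmetrization: without the cancellation $i\xi_1+i\xi_2=-i\xi_3$ the symbol $\sigma$ is of size $2^{k_1}$, and combined with $|\Omega|^{-1}\sim 2^{-k_1\alpha-k_3}$ this would leave $2^{k_1(1-\alpha)-k_3}$, which is insufficient to offset the loss $\norm{v}{\spfc^{-1/2}_T}^2$ carries in the high frequency $k_1$. Reducing the symbol to $O(2^{k_3})$ via symmetrization is what produces the genuine $2^{-k_1\alpha}$ decay needed to sum over $k_1>n$ and obtain the $2^{-nc}$ factor. The remaining manipulations are routine analogues of the calculations already performed in Lemmata \ref{lem_est_ert_hl} and \ref{lem_h12_HH}.
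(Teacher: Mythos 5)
Your opening decomposition — symmetrize the symbol $\sigma$ over $\xi_1\leftrightarrow\xi_2$ exactly as in \eqref{eq_sig} to reduce $|\sigma|\sim 2^{k_1}$ to $|\sigma_j|\lesssim 2^{k_3}$ — is a legitimate alternative to what the paper actually does. The paper's proof of this lemma instead integrates by parts in $x$ to move the derivative onto $P_{k_3}w$, and then splits the residual term $\int P_{k_1}^2 v\,(\dx v)\,P_{k_3}w$ into a perfect derivative ($T^2$) plus a commutator ($T^3$); the commutator symbol $\nu$ then plays the role of your $\sigma_2,\sigma_3$. Both routes achieve the same effect (an effective symbol of size $\lesssim 2^{k_3}$ before applying \eqref{eq_ibp_vvw}), and yours has the virtue of running in close parallel to Section \ref{ss_for_solution}.

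However, there is a genuine gap where you write that the integral terms $I\!I_1,I\!I_2$ ``produce bounds exactly analogous to those for $I_1,I_2$ in the proof of Lemma \ref{lem_est_ert_hl}'' and that ``the remaining manipulations are routine analogues.'' In the regime $k_a\sim k_1\gg k_b$ (i.e.\ after resubstituting $\dt\hat v(\xi_1)$, when the new frequency $\xi_a$ stays high) a direct estimate of the symbol $\bigl|\sigma_j/\Omega\bigr|\cdot|\xi_{ab}|\lesssim 2^{k_1(1-\alpha)}$ (note: not $2^{k_1(1-\alpha)}2^{k_3}$ as written — the $2^{k_3}$ in $\sigma_j$ cancels against the $2^{k_3}$ in $\Omega$), combined with Lemma \ref{lem_i4} and the $2^{k_a/2}2^{k_2/2}\sim 2^{k_1}$ weights coming from $\norm{v}{\spfc^{-1/2}_T}$ on \emph{two} high-frequency slots, gives a dyadic coefficient of order $2^{k_1(1-\alpha)}$, which does not sum over $k_1>n$ when $\alpha<1$. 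To rescue this one must symmetrize a second time, now over $\xi_a\leftrightarrow\xi_2$ (both slots carry $\hat v$, so this is available), and use the cancellation estimate \eqref{eq_est_res_inv} for $\Omega^{-1}(\xi_a,\xi_{2b},\xi_3)-\Omega^{-1}(\xi_{ab},\xi_2,\xi_3)$. In the paper's proof this is precisely the $m_1,\dots,m_6$ decomposition for $I\!I^1_1$ and the $A_1,A_2,A_3$ decomposition for $I\!I^3_{12}$, and it is where the bulk of the work lies. Referring vaguely to ``analogues of $I_1,I_2$'' misses this case entirely, since those are the $\xi_a\sim\xi_b$ contributions, and the cases you do wave at from Lemma \ref{lem_h12_HH} ($k_1\sim k_3$ or $k_1\ll k_3$) likewise do not contain this double--high--frequency-$v$ configuration. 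You should explicitly isolate the $k_a\sim k_1\gg k_b$ (and symmetrically $k_b\sim k_1\gg k_a$) sub-cases of $I\!I_1,I\!I_2$, note that the former requires the $\xi_a\leftrightarrow\xi_2$ symmetrization and Lemma \ref{cor_est_res_inv}, and check that the latter can in fact be treated directly because the high-frequency factor $w_{k_b}$ is measured in $\spfc^s_T$ and provides the decay $2^{-k_1 s}$.

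A smaller remark: your displayed symmetrization identity is not quite self-consistent — if you put $\chi_{k_2}(\xi_2)$ in the second bracketed piece you need $\chi_{k_1}^2(\xi_2)$ rather than $\chi_{k_1}^2(\xi_1)$ in the third (matching the paper's $\sigma_2,\sigma_3$ in Section \ref{ss_for_solution}), or alternatively keep $\chi_{k_1}^2(\xi_1)$ in the third but use $\chi_{k_2}(\xi_1)$ in the second. As written, the three pieces do not sum to $\sigma(\xi_1,\xi_2,\xi_3)+\sigma(\xi_2,\xi_1,\xi_3)$; the discrepancy is of lower order ($\lesssim 2^{2k_3}2^{-k_1}$) so the argument survives, but it should be fixed.
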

\begin{proof}
	Recall that we have
	\begin{align*}
		I\!I(k_1,k_3)
		=
		2^{-k_1}
		\sup_t\int_0^t\int_\bbt (-P_{k_1}^2\dx v)vP_{k_3}w.
	\end{align*}
	We can rewrite the spatial integral of $I(k_1,k_3)$ as follows:
	\begin{align*}
		\int_\bbt (-P_{k_1}^2\dx v)vP_{k_3}w
		&=
		\int_\bbt\! P_{k_1}^2vvP_{k_3}\dx w
		+
		\int_\bbt\! P_{k_1}^2v(\dx v) P_{k_3}w
		\\&=
		\underset{=:T^1}{\underbrace{\int_\bbt\! P_{k_1}^2vvP_{k_3}\dx w
		}}
		-
		\underset{=:T^2}{\underbrace{\frac{1}{2}\int_\bbt\! P_{k_1}v P_{k_1}vP_{k_3}\dx w}}
		+
		\underset{=:T^3}{\underbrace{\int_\bbt\! P_{k_1}v\left[P_{k_1}(\dx vP_{k_3}w)-P_{k_1}\dx vP_{k_3}\right]}}.
	\end{align*}
	Then, defining
	\begin{align*}
		I\!I^j(k_1,k_3):=2^{-k_1}\sup_t\int_0^t T^j
	\end{align*}
	for $j\in[3]$, we conclude
	\begin{align*}
		\abs{I\!I(k_1,k_3)}
		\leq
		\abs{I\!I^1(k_1,k_3)}
		+
		\abs{I\!I^2(k_1,k_3)}
		+
		\abs{I\!I^3(k_1,k_3)}.
	\end{align*}
	Now, as in Lemma \ref{lem_h12_HH}, we apply equation \eqref{eq_ibp_vvw} to $I\!I^1(k_1,k_3)$ and $I\!I^2(k_1,k_3)$. It follows
	\begin{align*}
		\abs{I\!I^j(k_1,k_3)}
		\leq
		\abs{B\!B^j(k_1,k_3)}+\abs{I\!I^j_1(k_1,k_3)}+\abs{I\!I^j_2(k_1,k_3)}+\abs{I\!I^j_3(k_1,k_3)},
		\qquad
		j\in[2],
	\end{align*}
	where the terms on the right-hand side for $j=1$ are given by
	\begin{align*}
		B\!B^1(k_1,k_3)
		&:=
		2^{-k_1}
		\sup_{t}\sum_{k_2}
		\left[ \sum_{\xi_{123}=0}\frac{-i\xi_3\chi_{k_1}^2(\xi_1)\chi_{k_2}(\xi_2)\chi_{k_3}(\xi_3)}{\Omega(\xi_1,\xi_2,\xi_3)}\hat{v}(\xi_1)\hat{v}(\xi_2)\hat{w}(\xi_3)\right]_{0}^{t},
		\\
		I\!I^1_1(k_1,k_3)
		&:=
		2^{-k_1}\sup_t\sum_{k_2}\int_0^t
		\sum_{\xi_{ab23}=0}
		\frac{(-i\xi_{ab})(-i\xi_3)\chi_{k_1}^2(\xi_{ab})\chi_{k_2}(\xi_2)\chi_{k_3}(\xi_3)}{\Omega(\xi_{ab},\xi_2,\xi_3)}\hat{v}(\xi_a)\hat{w}(\xi_b)\hat{v}(\xi_2)\hat{w}(\xi_3),
		\\
		I\!I^1_2(k_1,k_3)
		&:=
		2^{-k_1}\sup_t\sum_{k_2}\int_0^t
		\sum_{\xi_{1ab3}=0}
		\frac{(-i\xi_{ab})(-i\xi_3)\chi_{k_1}^2(\xi_1)\chi_{k_2}(\xi_{ab})\chi_{k_3}(\xi_3)}{\Omega(\xi_1,\xi_{ab},\xi_3)}\hat{v}(\xi_1)\hat{v}(\xi_a)\hat{w}(\xi_b)\hat{w}(\xi_3),
		\\
		I\!I^1_3(k_1,k_3)
		&:=
		2^{-k_1}\sup_t\sum_{\substack{k_2\\ j\in[2]}}\int_0^t
		\sum_{\xi_{12ab}=0}
		\frac{(-i\xi_{ab})^2\chi_{k_1}^2(\xi_1)\chi_{k_2}(\xi_2)\chi_{k_3}(\xi_{ab})}{\Omega(\xi_1,\xi_2,\xi_{ab})}\hat{v}(\xi_1)\hat{v}(\xi_2)\ftxh{u}_j(\xi_{a})\ftxh{u}_j(\xi_{b}).
	\end{align*}
	The corresponding terms for $j=2$ follow by obvious modifications.
	It remains to bound the quantities $I\!I^1$, $I\!I^2$ and $I\!I^3$.\\
	
	\textbf{Estimating $I\!I^1$.} Note that $k_1\sim k_2$ holds.
	The boundary term can be estimated similar to the boundary term $B$ in Lemma \ref{lem_est_ert_hl} by
	\begin{align*}
		\abs{B\!B^1(k_1,k_3)}
		\lesssim
		\sum_{k_2}2^{k_2(-1-\alpha)}2^{k_3/2}\norm{v_{k_1}}{\lebL^\infty_T\lebL^2}\norm{v_{k_2}}{\lebL^\infty_T\lebL^2}\norm{w}{\lebL^\infty_T\lebL^2}.
	\end{align*}
	Now, we estimate $I\!I_3^1$. Using $\abs{\Omega(\xi_{ab},\xi_2,\xi_3)^{-1}(-i\xi_3)^2}\lesssim 2^{-k_1\alpha}2^{k_3}$ as well as Lemma \ref{lem_i4}, we obtain
	\begin{align*}
		\abs{I\!I_3^1(k_1,k_3)}
		\lesssim
		T^d\!
		\sum_{\substack{k_2,k_a,k_b\\j\in[2]}}\!
		\norm{v_{k_1}}{\spfb^{k_1}_T}\!
		\norm{v_{k_2}}{\spfb^{k_2}_T}\!
		\norm{u_{j,k_a}}{\spfb^{k_a}_T}\!
		\norm{u_{j,k_b}}{\spfb^{k_b}_T}\!
		\times
		\begin{cases}
			2^{k_1(1-\alpha)}
			&\text{if }k_a\sim k_b\gtrsim k_1,
			\\
			2^{-k_1\alpha}2^{k_a/2}2^{k_3/2}
			&\text{if }k_1\gtrsim k_a,k_b.
		\end{cases}
	\end{align*}
	Similarly, the bound $\abs{\Omega^{-1}(\xi_{ab},\xi_2,\xi_3)(-i\xi_{ab})(-i\xi_3)}\lesssim 2^{k_1(1-\alpha)}$ and Lemma \ref{lem_i4} lead to
	\begin{align*}
		\abs{I\!I^1_1(k_1,k_3)}
		\lesssim
		T^d\!
		\sum_{k_2,k_a,k_b}\!
		\norm{v_{k_a}}{\spfb^{k_a}_T}\!
		\norm{w_{k_b}}{\spfb^{k_b}_T}\!
		\norm{v_{k_2}}{\spfb^{k_2}_T}\!
		\norm{w_{k_3}}{\spfb^{k_3}_T}\!
		\times
		\begin{cases}
			2^{k_1(1/2-\alpha)}2^{k_3/2}
			&\text{if }k_a\sim k_b\gtrsim k_1,
			\\
			2^{-k_2\alpha}2^{k_a/2}2^{k_3/2}
			&\text{if }k_b\sim k_1\gtrsim k_a.
		\end{cases}
	\end{align*}
	Note that we did not cover the case $k_2\sim k_a\gg k_b$ in the previous estimate. In that case, we have
	\begin{equation}\label{eq_bnd_II_1}
		\abs{I\!I^1_1(k_1,k_3)}_{k_2\sim k_a\gg k_b}
		\lesssim
		2^{-k_1}
		\sum_{k_2\sim k_a\gg k_b}\abs{\sup_t\int_0^t\sum_{\xi_{ab23}=0}
		m(\xi_a,\xi_b,\xi_2,\xi_3)
		\hat{v}(\xi_a)\hat{w}(\xi_b)\hat{v}(\xi_2)\hat{w}(\xi_3)},
	\end{equation}
	where
	\begin{equation}\label{eq_def_m}
		m(\xi_a,\xi_b,\xi_2,\xi_3)
		:=
		\frac{(-i\xi_{ab})\chi_{k_1}^2(\xi_{ab})\chi_{k_a}(\xi_a)\chi_{k_2}(\xi_2)}{\Omega(\xi_{ab},\xi_2,\xi_3)}(-i\xi_3)\chi_{k_3}(\xi_3)\chi_{k_b}(\xi_b).
	\end{equation}
	Here, a direct estimate of the modulus of $m$ only yields the bound $2^{k_1(1-\alpha)}2^{-k_3}$, which -- after an application of Lemma \ref{lem_i4} -- would lead to a factor $2^{-k_1\alpha}2^{k_3/2}2^{k_b/2}$. As $\alpha<1$, this is insufficient.
	Thus, we need to use the symmetry of $I\!I^1_1$ in the variables $\xi_2$ and $\xi_a$. The spatial integral of \eqref{eq_bnd_II_1} can be written as
	\begin{equation}\label{eq_m}
		\begin{split}
			&2\sum_{\xi_{ab23}=0}
			m(\xi_a,\xi_b,\xi_2,\xi_3)
			\hat{v}(\xi_a)\hat{w}(\xi_b)\hat{v}(\xi_2)\hat{w}(\xi_3)
			\\=
			&\sum_{\xi_{ab23}=0}
			\left[m(\xi_a,\xi_b,\xi_2,\xi_3)
			+
			m(\xi_2,\xi_b,\xi_a,\xi_3)\right]
			\hat{v}(\xi_a)\hat{w}(\xi_b)\hat{v}(\xi_2)\hat{w}(\xi_3).
		\end{split}
	\end{equation}
	For all $\xi_a$, $\xi_b$, $\xi_2$, $\xi_3$ satisfying $\xi_{ab23}=0$ direct calculations yield
	\begin{align*}
		&m(\xi_a,\xi_b,\xi_2,\xi_3)
		\\=
		&\,\frac{i(\xi_3-\xi_b)\chi_{k_1}^2(\xi_{ab})\chi_{k_a}(\xi_a)\chi_{k_2}(\xi_2)}{\Omega(\xi_{ab},\xi_2,\xi_3)}(-i\xi_3)\chi_{k_3}(\xi_3)\chi_{k_b}(\xi_b)
		&=:m_1
		\\+
		&\,\frac{(i\xi_{2b})\left[\chi_{k_1}^2(\xi_{ab})-\chi_{k_1}^2(\xi_{2b})\right]\chi_{k_a}(\xi_a)\chi_{k_2}(\xi_2)}{\Omega(\xi_{ab},\xi_2,\xi_3)}(-i\xi_3)\chi_{k_3}(\xi_3)\chi_{k_b}(\xi_b)
		&=:m_2
		\\+
		&\,\frac{(i\xi_{2b})\chi_{k_1}^2(\xi_{2b})\left[\chi_{k_a}(\xi_a)-\chi_{k_a}(\xi_2)\right]\chi_{k_2}(\xi_2)}{\Omega(\xi_{ab},\xi_2,\xi_3)}(-i\xi_3)\chi_{k_3}(\xi_3)\chi_{k_b}(\xi_b)
		&=:m_3
		\\+
		&\,\frac{(i\xi_{2b})\chi_{k_1}^2(\xi_{2b})\chi_{k_a}(\xi_2)\left[\chi_{k_2}(\xi_2)-\chi_{k_2}(\xi_a)\right]}{\Omega(\xi_{ab},\xi_2,\xi_3)}(-i\xi_3)\chi_{k_3}(\xi_3)\chi_{k_b}(\xi_b)
		&=:m_4
		\\+
		&\,(i\xi_{2b})\chi_{k_1}^2(\xi_{2b})\chi_{k_a}(\xi_2)\chi_{k_2}(\xi_a)\left[\Omega^{-1}(\xi_{ab},\xi_2,\xi_3)-\Omega^{-1}(\xi_{2b},\xi_a,\xi_3)\right](-i\xi_3)\chi_{k_3}(\xi_3)\chi_{k_b}(\xi_b)
		&=:m_5
		\\+
		&\,\frac{(i\xi_{2b})\chi_{k_1}^2(\xi_{2b})\chi_{k_a}(\xi_2)\chi_{k_2}(\xi_a)}{\Omega(\xi_{2b},\xi_a,\xi_3)}(-i\xi_3)\chi_{k_3}(\xi_3)\chi_{k_b}(\xi_b)
		&=:m_6.
	\end{align*}
	Note that $m_6=-m(\xi_2,\xi_b,\xi_a,\xi_3)$ holds. Hence, with \eqref{eq_m}, we obtain
	\begin{equation}\label{eq_bnd_II_2}
		\eqref{eq_bnd_II_1}
		\lesssim
		2^{-k_1}\sum_{\substack{k_2\sim k_a\gg k_b\\j\in[5]}}
		\abs{\sup_t\int_0^t
		\sum_{\xi_{ab23}=0}
		m_j
		\hat{v}(\xi_a)\hat{w}(\xi_b)\hat{v}(\xi_2)\hat{w}(\xi_3)}.
	\end{equation}
	The next step consists of proving
	\begin{align*}
		\abs{m_j}
		\lesssim
		2^{\max\{k_3,k_b\}}2^{-k_2\alpha},
		\qquad
		j\in[5].
	\end{align*}
	For $m_1$ the desired bound follows from \eqref{eq_est_res}, whereas for $m_5$ it is a consequence of Corollary \ref{cor_est_res_inv}. For $m_2$, $m_3$ and $m_4$ the claim follows from estimates of the form
	\begin{align*}
		\abs{\chi_{k_1}^2(\xi_{ab})-\chi_{k_1}^2(\xi_{2b})}
		=
		\abs{\chi_{k_1}^2(\xi_{ab})-\chi_{k_1}^2(-\xi_{2b})}
		\lesssim
		\abs{\xi_{ab}+\xi_{2b}}\norm{(\chi_{k_1}^2)'}{\lebL^\infty}
		\lesssim
		\abs{\xi_b-\xi_3}2^{-k_1}.
	\end{align*}
	Thus, applying Lemma \ref{lem_i4} to the right-hand side of \eqref{eq_bnd_II_2}, we conclude
	\begin{align*}
		\abs{I\!I^1_1(k_1,k_3)}_{k_2\sim k_a\gg k_b}
		\lesssim
		T^d\!
		\sum_{k_2\sim k_a\gg k_b}\!
		2^{-k_1(1+\alpha)}2^{\max\{k_3,k_b\}}2^{k_3/2}2^{k_b/2}
		\norm{v_{k_a}}{\spfb^{k_a}_T}\!
		\norm{w_{k_b}}{\spfb^{k_b}_T}\!
		\norm{v_{k_2}}{\spfb^{k_2}_T}\!
		\norm{w_{k_3}}{\spfb^{k_3}_T}.
	\end{align*}
	The term $I\!I^1_2(k_1,k_3)$ can be dealt with by the same arguments.\\
	
	\textbf{Estimating $I\!I^2$.} To handle the term $I\!I^2$, we can repeat the calculations made for $I\!I^1$ replacing $\chi_{k_1}^2$ and $\chi_{k_2}$ by $\chi_{k_1}$ and omitting the sum over $k_2$. These changes only have an impact on the implicit constants.\\
	
	\textbf{Estimating $I\!I^3$.} Now, we consider
	\begin{align*}
		I\!I^3(k_1,k_3)
		&=
		2^{-k_1}\sup_t\int_0^t
		\int_\bbt P_{k_1}v\left[P_{k_1}(\dx vP_{k_3}w)-P_{k_1}(\dx v)P_{k_3}w\right]
		\\&=
		2^{-k_1}\sup_t\int_0^t
		\sum_{\xi_{123}=0}\underset{=:\nu(\xi_1,\xi_2,\xi_3)}{\underbrace{\xi_2\chi_{k_1}(\xi_1)\left[\chi_{k_1}(\xi_{23})-\chi_{k_1}(\xi_2)\right]\chi_{k_3}(\xi_3)}}\hat{v}(\xi_1)\hat{v}(\xi_2)\hat{w}(\xi_3).
	\end{align*}
	Applying \eqref{eq_ibp_vvw}, we obtain
	\begin{align*}
		\abs{I\!I^3(k_1,k_3)}
		\leq
		\abs{B\!B^3(k_1,k_3)}
		+
		\abs{I\!I^3_1(k_1,k_3)}
		+
		\abs{I\!I^3_2(k_1,k_3)}
		+
		\abs{I\!I^3_{12}(k_1,k_3)}
		+
		\abs{I\!I^3_3(k_1,k_3)},
	\end{align*}
	where the terms on the right-hand side are given by
	\begin{align*}
		B\!B^3(k_1,k_3)
		&:=
		2^{-k_1}
		\sup_{t}
		\left[ \sum_{\xi_{123}=0}\frac{\nu}{\Omega}(\xi_1,\xi_2,\xi_3)\hat{v}(\xi_1)\hat{v}(\xi_2)\hat{w}(\xi_3)\right]_{0}^{t},
		\\
		I\!I^3_{1}(k_1,k_3)
		&:=
		2^{-k_1}\sup_t\int_0^t\sum_{\substack{\xi_{ab23}=0\\ \xi_a\not\sim\xi_2}}
		\xi_{ab}\frac{\nu}{\Omega}(\xi_{ab},\xi_2,\xi_3)
		\hat{v}(\xi_a)\hat{v}(\xi_2)\hat{w}(\xi_3)\hat{w}(\xi_b),
		\\
		I\!I^3_{2}(k_1,k_3)
		&:=
		2^{-k_1}\sup_t\int_0^t\sum_{\substack{\xi_{1ab3}=0\\ \xi_1\not\sim\xi_a}}
		\xi_{ab}\frac{\nu}{\Omega}(\xi_1,\xi_{ab},\xi_3)
		\hat{v}(\xi_1)\hat{v}(\xi_a)\hat{w}(\xi_3)\hat{w}(\xi_b),
		\\
		I\!I^3_{12}(k_1,k_3)
		&:=
		2^{-k_1}\sup_t\int_0^t\sum_{\substack{\xi_{ab23}=0\\ \xi_a\sim\xi_2}}
		\left[\xi_{ab}\frac{\nu}{\Omega}(\xi_{ab},\xi_2,\xi_3)+\xi_{2b}\frac{\nu}{\Omega}(\xi_a,\xi_{2b},\xi_3)\right]
		\hat{v}(\xi_a)\hat{v}(\xi_2)\hat{w}(\xi_3)\hat{w}(\xi_b),
		\\
		I\!I_3^3(k_1,k_3)
		&:=
		2^{-k_1}\sup_t\sum_{j\in[2]} \int_0^t
		\sum_{\xi_{12ab}=0}
		(-i\xi_{ab})\frac{\nu}{\Omega}(\xi_1,\xi_2,\xi_{ab})
		\hat{v}(\xi_1)\hat{v}(\xi_2)\ftxh{u}_j(\xi_{a})\ftxh{u}_j(\xi_{b}).
	\end{align*}
	Here, $I\!I^3_{12}$ contains those summands, which are excluded in the summation of $I\!I^3_1$ and $I\!I^3_2$. When estimating $I\!I^3_{12}$, we will emphasize the advantage of this definition.
	Also, note that the variable $\xi_2$ is already of size $2^{k_1}$ due to the localization in $\nu(\xi_1,\xi_2,\xi_3)$. Nonetheless, we localize each $\xi_2$ to dyadic frequency ranges to improve the notation.
	
	Similar to the estimation for the boundary term $B$ in Lemma \ref{lem_est_ert_hl}, we bound $B\!B^3$ by
	\begin{align*}
		\abs{B\!B^3(k_1,k_3)}
		\lesssim
		\sum_{k_2} 2^{-k_1(1+\alpha)}2^{k_3/2}\norm{v_{k_1}}{\lebL^\infty_T\lebL^2}\norm{v_{k_2}}{\lebL^\infty_T\lebL^2}\norm{w_{k_3}}{\lebL^\infty_T\lebL^2}.
	\end{align*}
	Using Lemma \ref{lem_i4} and the bounds $\abs{\frac{\nu}{\Omega}(\xi_1,\xi_2,\xi_{ab})(-i\xi_{ab})}\lesssim 2^{-k_1\alpha}2^{k_3}$, ${\abs{\frac{\nu}{\Omega}(\xi_{ab},\xi_2,\xi_3)(\xi_{ab})}\lesssim 2^{-k_1\alpha}2^{k_3}}$ and $\abs{\frac{\nu}{\Omega}(\xi_1,\xi_{ab},\xi_3)(\xi_{ab})}\lesssim 2^{-k_1\alpha}2^{k_3}$, it follows
	\begin{align*}
		\abs{I\!I_3^3(k_1,k_3)}
		\lesssim
		T^d\!\!
		\sum_{\substack{k_2 ,k_a, k_b\\j\in[2]}}\!
		\norm{v_{k_1}}{\spfb^{k_1}_T}\!
		\norm{v_{k_2}}{\spfb^{k_2}_T}\!
		\norm{u_{j,k_a}}{\spfb^{k_a}_T}\!
		\norm{u_{j,k_b}}{\spfb^{k_b}_T}\!
		\times
		\begin{cases}
			2^{k_1(1-\alpha)}
			&\text{if }k_a\sim k_b\gtrsim k_2,
			\\
			2^{-k_1(1+\alpha)}2^{k_a3/2}2^{k_b/2}
			&\text{if }k_3\sim k_a\gtrsim k_b,
			\\
			2^{-k_1(1+\alpha)}2^{k_b3/2}2^{k_a/2}
			&\text{if }k_3\sim k_b\gtrsim k_a,
		\end{cases}
	\end{align*}
	as well as
	\begin{align*}
		\abs{I\!I_1^3(k_1,k_3)}
		\lesssim
		T^d\!\!
		\sum_{k_2,k_a,k_b}\!
		\norm{v_{k_a}}{\spfb^{k_a}_T}\!
		\norm{w_{k_b}}{\spfb^{k_b}_T}\!
		\norm{v_{k_2}}{\spfb^{k_2}_T}\!
		\norm{w_{k_3}}{\spfb^{k_3}_T}\!
		\times
		\begin{cases}
			2^{k_1(1/2-\alpha)}2^{k_3/2}
			&\text{if }k_a\sim k_b\gg k_2,
			\\
			2^{-k_1\alpha}2^{k_3/2}2^{k_a/2}
			&\text{if }k_2\sim k_b\gg k_a,
		\end{cases}
	\end{align*}
	and
	\begin{align*}
		\abs{I\!I_2^3(k_1,k_3)}
		\lesssim
		T^d\!\!
		\sum_{k_2,k_a,k_b}\!
		\norm{v_{k_a}}{\spfb^{k_a}_T}\!
		\norm{w_{k_b}}{\spfb^{k_b}_T}\!
		\norm{v_{k_2}}{\spfb^{k_2}_T}\!
		\norm{w_{k_3}}{\spfb^{k_3}_T}\!
		\times
		\begin{cases}
			2^{k_1(1/2-\alpha)}2^{k_3/2}
			&\text{if }\xi_a\sim\xi_b\gg\xi_1,
			\\
			2^{-k_1\alpha}2^{k_3/2}2^{k_a/2}
			&\text{if }\xi_1\sim\xi_b\gg\xi_a.
		\end{cases}
	\end{align*}
	It remains to bound $I\!I_{12}^3$. A direct estimate leads to the insufficient factor $2^{-k_1\alpha}2^{k_3/2}2^{k_b/2}$ forcing us to use some cancellation in the symbol. We have
	\begin{align*}
		\xi_{ab}\frac{\nu}{\Omega}(\xi_{ab},\xi_2,\xi_3)+\xi_{2b}\frac{\nu}{\Omega}(\xi_a,\xi_{2b},\xi_3)
		&=
		\left[\xi_{ab}+\xi_{2b}\right]\frac{\nu}{\Omega}(\xi_{ab},\xi_2,\xi_3)
		&=:A_1
		\\&+
		\left[\nu(\xi_a,\xi_{2b},\xi_3)-\nu(\xi_{ab},\xi_2,\xi_3)\right]\xi_{2b}\Omega^{-1}(\xi_{ab},\xi_2,\xi_3)
		&=:A_2
		\\&+
		\left[\Omega^{-1}(\xi_a,\xi_{2b},\xi_3)-\Omega^{-1}(\xi_{ab},\xi_2,\xi_3)\right]\xi_{2b}\nu(\xi_a,\xi_{2b},\xi_3)
		&=:A_3.
	\end{align*}
	To bound $A_1$, we use $\abs{\nu}\lesssim 2^{k_3}$, \eqref{eq_est_res} as well as $\xi_{ab}+\xi_{2b}=\xi_b-\xi_3$, which holds due to $\xi_{ab23}=0$. It follows
	\begin{align*}
		\abs{A_1}
		\lesssim
		\abs{\xi_b-\xi_3}\abs{\frac{\nu}{\Omega}(\xi_{ab},\xi_2,\xi_3)}
		\lesssim
		2^{-k_2\alpha}2^{\max\{k_3,k_b\}}.
	\end{align*}
	We observe
	\begin{align*}
		\nu(\xi_{a},\xi_{2b},\xi_3)-\nu(\xi_{ab},\xi_{2},\xi_3)
		&=
		\xi_{b}\chi_{k_1}(\xi_{a})\left[\chi_{k_1}(\xi_{23b})-\chi_{k_1}(\xi_{2b})\right]\chi_{k_3}(\xi_3)
		\\&+
		\xi_{2}\left[\chi_{k_1}(\xi_{a})-\chi_{k_1}(\xi_{ab})\right]\left[\chi_{k_1}(\xi_{23b})-\chi_{k_1}(\xi_{2b})\right]\chi_{k_3}(\xi_3)
		\\&+
		\xi_{2}\chi_{k_1}(\xi_{ab})\left[\chi_{k_1}(\xi_{23b})-\chi_{k_1}(\xi_{2b})-\chi_{k_1}(\xi_{23})+\chi_{k_1}(\xi_{2})\right]\chi_{k_3}(\xi_3).
	\end{align*}
	Trivially, each term on the right-hand side has modulus bounded by $2^{-k_1}2^{k_b}2^{k_3}$. Hence, it follows
	\begin{align*}
		\abs{A_2}
		\lesssim
		2^{k_b}2^{-k_1}2^{k_3}2^{k_2}2^{-k_2\alpha}2^{-k_3}
		=
		2^{-k_2\alpha}2^{k_b}.
	\end{align*}
	Concerning $A_3$, we can use Lemma \ref{cor_est_res_inv} and immediately arrive at
	\begin{align*}
		\abs{A_3}
		\lesssim
		2^{-k_2(1+\alpha)}2^{-k_3}2^{k_b}2^{k_2}2^{k_3}
		=
		2^{-k_2\alpha}2^{k_b}.
	\end{align*}
	Thus, we can bound the modulus of the symbol appearing in $I\!I^3_{12}$ by $2^{-k_2\alpha}2^{k_b}$. Lemma \ref{lem_i4} yields
	\begin{align*}
		\abs{I\!I_{12}^3(k_1,k_3)}
		\lesssim
		T^d
		\sum_{k_a,k_b,k_2}
		2^{-k_2(1+\alpha)}2^{\max\{k_3,k_b\}}2^{k_3/2}2^{k_b/2}
		\norm{v_{k_a}}{\spfb^{k_a}_T}
		\norm{w_{k_b}}{\spfb^{k_b}_T}
		\norm{v_{k_2}}{\spfb^{k_2}_T}
		\norm{w_{k_3}}{\spfb^{k_3}_T}.
	\end{align*}
	This concludes the proof after summation of the obtained bounds in $k_1$ and $k_3$.
\end{proof}

Finally, observe that the claimed estimate \eqref{est_ezt} is a consequence of \eqref{eq_h12_diff} and the Lemmata \ref{lem_h12_LL}, \ref{lem_h12_HH}, \ref{lem_h12_LH} and \ref{lem_h12_HL}.

\subsection{Proof of the third energy estimate}\label{ss_for_differences_ii}

We end Section \ref{s_energy} with the proof of estimate \eqref{est_est}. Recall that $u_1$, $u_2$ are smooth solutions of \eqref{eq_PDE} and that we defined $v=u_1-u_2$ and $w=u_1+u_2$.
Rewriting \eqref{eq_PDE_difference}, we obtain that $v$ satisfies the equation
\begin{equation}\label{eq_PDE_difference_ii}
	\dt v
	+
	\dx D_x^\alpha v
	=
	\dx(vw)
	=
	\dx(vv)
	+
	2\dx(vu_2).
\end{equation}
Thus, as in the beginning of Section \ref{ss_for_solution}, it follows
\begin{equation}\label{eq_split_x_y}
	\norm{v}{\spec_T^s}^2
	-
	\norm{v_0}{\lebH^s}^2
	\lesssim
	\underset{=:X}{\underbrace{\sum_{k_1}
	2^{k_12s}
	\sup_{t}\int_0^{t}\int_\bbt P_{k_1}^2(\dx v)vv}}
	+
	\underset{=:Y}{\underbrace{\sum_{k_1}
	2^{k_12s}
	\sup_{t}\int_0^{t}\int_\bbt P_{k_1}^2(\dx v)vu_2}}.
\end{equation}
The advantage of splitting the term above on the right-hand side into $X$ and $Y$ is that $X$ can be estimated by proceeding as in Section \ref{ss_for_solution}, whereas the bound for $Y$ follows by modifying arguments used in Section \ref{ss_for_differences_ii}.

Let us begin with estimating $X$. As in Section \ref{ss_for_solution}, we frequency-localize the second and third factor in the integrand and restrict ourselves to the case $k_1\sim k_2\gg k_3$. Then, using
\begin{align*}
	&-\Omega(\xi_1,\xi_2,\xi_3)\hat{v}(\xi_1)\hat{v}(\xi_2)\hat{v}(\xi_3)
	\\=&
	\dt \left[\hat{v}(\xi_1)\hat{v}(\xi_2)\hat{v}(\xi_3)\right]
	+
	i\xi_1\widehat{vw}(\xi_1)\hat{v}(\xi_2)\hat{v}(\xi_3)
	+
	i\xi_2\hat{v}(\xi_1)\widehat{vw}(\xi_2)\hat{v}(\xi_3)
	+
	i\xi_3\hat{v}(\xi_1)\hat{u}(\xi_2)\widehat{vw}(\xi_3)
\end{align*}
instead of \eqref{eq_ibp_uuu} and noting that $w=v+2u_2$ holds, we deduce
\begin{align*}
	X
	\lesssim
	\sum_{n\geq k_1\gg k_3}
	I(k_1,k_3;v)
	+
	\sum_{n<k_1\gg k_3}
	I(k_1,k_3;v).
\end{align*}
Here, we slightly abused the notation introduced in \eqref{eq_split_i} -- the additional term $v$ indicates that we replace the factors $uuu$ in the spatial integral of \eqref{eq_split_i} by $vvv$.
Repeating the proof of Lemma \ref{lem_est_ert_ll}, it follows
\begin{align*}
	\sum_{n\geq k_1\gg k_3}
	\abs{I(k_1,k_3;v)}
	\lesssim
	T2^{2n}\norm{v}{\spfc_T^s}^3.
\end{align*}
Similar to Lemma \ref{lem_est_ert_hl}, we obtain
\begin{align*}
	\abs{I(k_1,k_3;v)}
	&\lesssim
	\abs{B(k_1,k_3;v)}
	\\&+
	\abs{I_1(k_1,k_3;v)}
	+
	\abs{I_2(k_1,k_3;v)}
	+
	\abs{I_{12}(k_1,k_3;v)}
	+
	\abs{I_{3}(k_1,k_3;v)}
	\\&+
	\abs{I_1(k_1,k_3;u_2)}
	+
	\abs{I_2(k_1,k_3;u_2)}
	+
	\abs{I_{12}(k_1,k_3;u_2)}
	+
	\abs{I_{3}(k_1,k_3;u_2)},
\end{align*}
where we again changed the notation. Here, each term in the first line has its integrand $uuu$ replaced by $vvv$, whereas each term in the second line has its integrand $uuuu$ replaced by $vvvv$. The terms in the third line have their integrand $uuuu$ replaced by $vu_2vv$, $vvu_2v$ or $vvvu_2$. An inspection of the proof of Lemma \ref{lem_est_ert_hl} shows that the bounds of $B$, $I_1$, $I_2$ and $I_3$ do not depend on the integrand, whereas the estimate for $I_{12}$ requires an integrand that is symmetric in its high-frequency variables. Hence, repeating the proof of Lemma \ref{lem_est_ert_hl}, we get
\begin{align*}
	\sum_{n<k_1\gg k_3}
	\abs{B(k_1,k_3;v)}
	&\lesssim
	2^{-nc}\norm{v}{\spfc^s_T}^3,
	\\
	\sum_{n<k_1\gg k_3}
	\abs{I_1(k_1,k_3;v)}
	+
	\abs{I_2(k_1,k_3;v)}
	+
	\abs{I_{12}(k_1,k_3;v)}
	+
	\abs{I_{3}(k_1,k_3;v)}
	&\lesssim
	T^d\norm{v}{\spfc^s_T}^4,
	\\
	\sum_{n<k_1\gg k_3}
	\abs{I_1(k_1,k_3;u_2)}
	+
	\abs{I_2(k_1,k_3;u_2)}
	+
	\abs{I_{3}(k_1,k_3;u_2)}
	&\lesssim
	T^d\norm{v}{\spfc^s_T}^3\norm{u_2}{\spfc^s_T}.
\end{align*}
In order to end the estimation of $X$, it remains to consider $I_{12}(k_1,k_3;u_2)$ given by
\begin{align*}
	I_{12}(k_1,k_3;u_2)
	&=
	2^{k_12r}\sup_{t}\sum_{\substack{k_2\\j\in[3]}}\int_0^{t}\sum_{\substack{\xi_{ab23}=0\\ \xi_a\not\sim\xi_b}} \left[\frac{\sigma_j}{\Omega}(\xi_{ab},\xi_2,\xi_3)+\frac{\sigma_j}{\Omega}(\xi_2,\xi_{ab},\xi_3)\right]\xi_{ab}\ftxh{v}(\xi_a)\ftxh{u}_2(\xi_b)\ftxh{v}(\xi_2)\ftxh{v}(\xi_3).
\end{align*}
\begin{lem}
	Let $s>3/2-\alpha$. Then, there exists $d>0$ such that we have
	\begin{align*}
		\sum_{n<k_1\gg k_3}\abs{I_{12}(k_1,k_3;u_2)}
		&\lesssim
		T^d\norm{v}{\spfc_T^s}^3\norm{u_2}{\spfc_T^s}
		+
		T^d\norm{v}{\spfc_T^{-1/2}}\norm{v}{\spfc_T^s}^2\norm{u_2}{\spfc_T^{s+2-\alpha}}.
	\end{align*}
\end{lem}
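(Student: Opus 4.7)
The plan is to split $I_{12}(k_1,k_3;u_2)$ based on whether the mass of $\xi_{ab}=\xi_1$ comes from $\xi_a$ or $\xi_b$. Since the definition of $I_{12}$ imposes $\xi_a\not\sim\xi_b$, either $k_a\sim k_1\gg k_b$ (Case A) or $k_b\sim k_1\gg k_a$ (Case B), and these two regimes behave very differently.

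In Case A, the integrand $\hat v(\xi_a)\hat u_2(\xi_b)\hat v(\xi_2)\hat v(\xi_3)$ remains symmetric under the interchange $\xi_a\leftrightarrow\xi_2$ (both slots carry $\hat v$), and the hypotheses $k_2\sim k_a$, $k_2\gtrsim k_b$ of Lemma \ref{cor_est_res_inv} are met. The symmetrization argument used in the proof of Lemma \ref{lem_est_ert_hl} therefore applies verbatim: the decomposition into $s_1+s_2+s_3$ yields the symbol bound $2^{-k_1\alpha}2^{\max\{k_3,k_b\}}$. Applying Lemma \ref{lem_i4} and placing the three $v$-factors in $\spfc^s_T$ and the $u_2$-factor in $\spfc^s_T$, the resulting multiple series sums geometrically for $s>3/2-\alpha$ exactly as in Lemma \ref{lem_est_ert_hl}, producing the contribution $T^d\norm{v}{\spfc^s_T}^3\norm{u_2}{\spfc^s_T}$.

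Case B is the main obstacle, since the symmetrization breaks down: one has $\xi_{ab}+\xi_{2b}=\xi_b-\xi_3\sim 2^{k_1}$ instead of being small, and Lemma \ref{cor_est_res_inv} is unavailable because $k_a\ll k_2$ (and no swap $\xi_a\leftrightarrow\xi_3$ helps either, as $\xi_{ab}+\xi_{3b}=\xi_b-\xi_2\sim 2^{k_1}$). We therefore retreat to the crude symbol bound $|\Omega^{-1}(\xi_{ab},\xi_2,\xi_3)\sigma_j\xi_{ab}|\lesssim 2^{k_1(1-\alpha)}$. To absorb the resulting loss we exploit that $\xi_b$ is now the top frequency: placing $u_{2,k_b}$ in $\spfc^{s+2-\alpha}_T$ saves exactly the needed $2^{-k_1(2-\alpha)}$. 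Then $v_{k_2}$ enters via $\spfc^s_T$ and, among the two low-frequency $v$-factors $v_{k_a}$ and $v_{k_3}$, we place the one at the strictly smaller frequency in $\spfc^{-1/2}_T$ (using the sharp bound $\norm{v_k}{\spfb^k_T}\leq 2^{k/2}\norm{v}{\spfc^{-1/2}_T}$) and the other in $\spfc^s_T$. Splitting Case B into the two sub-cases $k_a\leq k_3$ and $k_3\leq k_a$ is essential: a rigid choice of slot for the $\spfc^{-1/2}_T$ norm produces a divergent tail in the $k_1$ sum.

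With these norm assignments, Lemma \ref{lem_i4} reduces each sub-case of Case B to a geometric expression of the form $\sum_{k_1>n}2^{-k_1}\sum_{k_3<k_1}2^{k_3(3/2-s)}$ once the other low-frequency index has been summed against a factor $2^{k_\bullet}$. This is finite for every $s>1/2$, in particular for $s>3/2-\alpha$, and delivers the contribution $T^d\norm{v}{\spfc^{-1/2}_T}\norm{v}{\spfc^s_T}^2\norm{u_2}{\spfc^{s+2-\alpha}_T}$. Combining the two cases gives the claim.
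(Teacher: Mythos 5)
Your decomposition of $I_{12}(k_1,k_3;u_2)$ into Case A ($k_a\sim k_1\gg k_b$) and Case B ($k_b\sim k_1\gg k_a$), the use of the symmetrization / Lemma \ref{cor_est_res_inv} machinery in Case A, the crude symbol bound $2^{k_1(1-\alpha)}$ in Case B, and the norm assignments ($u_2$ into $\spfc^{s+2-\alpha}_T$, the high-frequency $v$ into $\spfc^s_T$, one low-frequency $v$ into $\spfc^{-1/2}_T$ and the other into $\spfc^s_T$) all match the paper exactly, and the resulting bounds do give the stated conclusion. So the proposal is correct and takes the same route.

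However, the parenthetical claim that ``splitting Case B into the two sub-cases $k_a\leq k_3$ and $k_3\leq k_a$ is essential: a rigid choice of slot for the $\spfc^{-1/2}_T$ norm produces a divergent tail in the $k_1$ sum'' is false, and the paper does not perform any such split. With a fixed assignment (say $v_{k_a}\in\spfc^{-1/2}_T$, $v_{k_3}\in\spfc^s_T$) one has
\begin{align*}
	\sum_{k_a<k_1}2^{k_a/2}\norm{v_{k_a}}{\spfb^{k_a}_T}\lesssim 2^{k_1}\norm{v}{\spfc^{-1/2}_T},
	\qquad
	\sum_{k_3<k_1}2^{k_3/2}\norm{v_{k_3}}{\spfb^{k_3}_T}\lesssim \norm{v}{\spfc^s_T},
\end{align*}
so the remaining expression is $\sum_{k_1>n}2^{k_1(2s+2-\alpha)}\norm{u_{2,k_1}}{\spfb^{k_1}_T}\norm{v_{k_1}}{\spfb^{k_1}_T}$, which is finite by Cauchy--Schwarz in $k_1$ since $2s+2-\alpha=(s+2-\alpha)+s$. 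The divergence you see appears only if one also replaces the $k_1$-sum over the two top-frequency factors by a pointwise supremum, thereby discarding the $\ell^2_{k_1}$ gain built into the $\spfc^\bullet_T$-norms. The extra case analysis in your write-up is therefore harmless but unnecessary, and the stated justification for it is incorrect.
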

\begin{proof}
	In the case $k_a\gg k_b$ we have $k_a\sim k_2$ and the integrand $\ftxh{v}(\xi_a)\ftxh{u}_2(\xi_b)\ftxh{v}(\xi_2)\ftxh{v}(\xi_3)$ is symmetric in the high-frequency variables $\xi_a$ and $\xi_2$. As in Lemma \ref{lem_est_ert_hl}, it follows
	\begin{align*}
		\abs{I_{12}(k_1,k_3;u_2)}_{k_a\gg k_b}
		&\lesssim
		T^d\sum_{k_a, k_b, k_2} 2^{k_2(2s-\alpha)}2^{\max\{k_3,k_b\}}2^{k_3/2}2^{k_b/2}
		\norm{u_{2,k_b}}{\spfb^{k_b}_T}\prod_{j\in\{2,3,a\}}\norm{v_{k_j}}{\spfb^{k_j}_T}.
	\end{align*}
	In the remaining case $k_b\gg k_a$ we have $k_b\sim k_2$ and the integrand is no longer symmetric in the high-frequency variables $\xi_b$ and $\xi_2$. Hence, the argument given in Lemma \ref{lem_est_ert_hl} does not apply. Instead, we use the direct bound of the symbol
	\begin{align*}
		\abs{\frac{\sigma_j}{\Omega}(\xi_{ab},\xi_2,\xi_3)\xi_{ab}+\frac{\sigma_j}{\Omega}(\xi_2,\xi_{ab},\xi_3)\xi_{ab}}\lesssim 2^{k_1(1-\alpha)}
	\end{align*}
	as well as the fact that $u_2$ can be estimated in a high-regularity norm. An application of Lemma \ref{lem_i4} yields
	\begin{align*}
		\abs{I_{12}(k_1,k_3;u_2)}_{k_b\gg k_a}
		&\lesssim
		T^d\sum_{k_a, k_b, k_2} 2^{k_2(2s+1-\alpha)}2^{k_3/2}2^{k_a/2}
		\norm{u_{2,k_b}}{\spfb^{k_b}_T}\prod_{j\in\{2,3,a\}}\norm{v_{k_j}}{\spfb^{k_j}_T}.
	\end{align*}
	Recall that we assumed $k_1\sim k_2\gg k_3$. Thus, summation in $k_1$ and $k_3$ concludes the proof.
\end{proof}

Finally, we obtain
\begin{equation}\label{eq_X_bound}
	\abs{X}
	\lesssim
	(T2^{2n}+2^{-nc})\norm{v}{\spfc^s_T}^3
	+
	T^d\norm{v}{\spfc^s_T}^4
	+
	T^d\norm{v}{\spfc^s_T}^3\norm{u_2}{\spfc^s_T}
	+
	T^d\norm{v}{\spfc^{-1/2}_T}\norm{v}{\spfc^s_T}^2\norm{u_2}{\spfc^{s+2-\alpha}_T}.
\end{equation}

Let us continue by estimating $Y$. We will proceed similar to Section \ref{ss_for_differences_i}. However, as the regularity is different, we need to modify some steps, which is why we will provide a complete proof. As before, we have
\begin{align}\label{eq_Y_bound}
	\begin{split}
		Y
		&\lesssim
		\sum_{k_1, k_3}\underset{=:I\!I\!I(k_1,k_3)}{\underbrace{2^{k_12s} \int_0^T\int_\bbt P_{k_1}^2(\dx v)vP_{k_3}u_2}}
		\\&=
		\sum_{k_1,k_3\leq n}I\!I\!I(k_1,k_3)
		+
		\sum_{k_1\sim k_3> n}I\!I\!I(k_1,k_3)
		+
		\sum_{k_1\ll k_3> n}I\!I\!I(k_1,k_3)
		+
		\sum_{k_3\ll k_1> n}I\!I\!I(k_1,k_3).
	\end{split}
\end{align}

Let us estimate the low-frequency contribution first.
\begin{lem}\label{lem_hs_ll}
	Let $s>3/2-\alpha$. Then, we have
	\begin{align*}
		\sum_{k_1,k_3\leq n}\abs{I\!I\!I(k_1,k_3)}
		\lesssim
		T2^{2n}\norm{v}{\spfc^{s}_T}^2\norm{u_2}{\spfc^s_T}.
	\end{align*}	
\end{lem}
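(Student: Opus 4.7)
The plan is to follow the strategy of Lemma \ref{lem_h12_LL}, adapting it to the regularity $\lebH^s$ in place of $\lebH^{-1/2}$. First I would expand the middle factor $v=\sum_{k_2}v_{k_2}$. The Fourier support condition $\xi_1+\xi_2+\xi_3=0$ combined with $k_1,k_3\leq n$ forces $k_2\lesssim n$, so all three frequency indices are bounded by $n$ up to $O(1)$, and in particular at least two of $k_1,k_2,k_3$ must be comparable.

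Next I would apply Hölder's inequality in $x$ in the form $\lebL^2\cdot\lebL^2\cdot\lebL^\infty$, placing the $\lebL^\infty$ factor on the piece of smallest frequency and invoking Bernstein's inequality to pay a cost of $2^{k_3^*/2}$, where $k_3^*$ stands for $\min(k_1,k_2,k_3)$. Combined with the trivial $\lebL^\infty_t$ bound on $[0,T]$ this yields
\begin{align*}
	\abs{I\!I\!I(k_1,k_3)}
	\lesssim
	T\,2^{k_1(2s+1)+k_3^*/2}\sum_{k_2\lesssim n}\norm{v_{k_1}}{\lebL^\infty_T\lebL^2}\norm{v_{k_2}}{\lebL^\infty_T\lebL^2}\norm{P_{k_3}u_2}{\lebL^\infty_T\lebL^2},
\end{align*}
where the $2^{k_1}$ in the exponent stems from the derivative in $P_{k_1}^2\dx v$.

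It remains to sum over $k_1,k_3\leq n$ and $k_2\lesssim n$. After normalizing each factor by the Sobolev weight $2^{ks}$ and invoking the embedding \eqref{eq_est_emb_spfc}, I would split into the three dyadic cases $k_1\sim k_2\geq k_3$, $k_1\sim k_3\geq k_2$ and $k_2\sim k_3\geq k_1$. In each case the residual exponent $k_1(s+1)-k_2s-k_3s+k_3^*/2$ is bounded by $n+O(1)$ thanks to $s>1/2$, which is a consequence of $s>3/2-\alpha$ and $\alpha<1$; for instance in the third case one has $k_1(s+3/2)-(k_2+k_3)s\leq k_1(3/2-s)+O(1)$. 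A Cauchy--Schwarz argument over the $\ell^2$-Sobolev-weighted sequences then closes the estimate, up to a harmless $\sqrt{n}$-loss arising from a single unconstrained dyadic sum over a frequency index $\leq n$, which is absorbed by the spare factor $2^n$ to yield $T\,2^{2n}\norm{v}{\spfc^s_T}^2\norm{u_2}{\spfc^s_T}$. I do not anticipate a genuine obstacle, as this is a purely low-frequency estimate with a generous $2^{2n}$-budget and no resonance, commutator, or symmetrization structure is required.
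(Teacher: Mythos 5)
Your proposal matches the paper's proof: expand $v=\sum_{k_2}v_{k_2}$ noting $k_2\lesssim n$, bound the trilinear integral by Hölder plus Bernstein to produce the factor $T\,2^{k_1(1+2s)}2^{k_3^*/2}\prod\norm{\cdot}{\lebL^\infty_T\lebL^2}$, then invoke the embedding \eqref{eq_est_emb_spfc} and sum over the bounded dyadic range. The closing bookkeeping is also consistent with what the paper leaves implicit (your $\sqrt n$-loss remark is actually unnecessary, as the $\sup$/Cauchy–Schwarz split already closes without it, but it is harmlessly absorbed by the $2^{2n}$ budget).
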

\begin{proof}
	Writing $v=\sum_{k_2}v_{k_2}$ and noticing that $k_2\lesssim n$ holds, the claim follows from the estimate
	\begin{align*}
		\abs{I\!I\!I(k_1,k_3)}
		\lesssim
		\sum_{k_2}2^{k_12s}\int_0^T\int_\bbt \abs{(\dx v)_{k_1}v_{k_2}u_{2,k_3}}
		\lesssim
		T\sum_{k_2} 2^{k_1(1+2s)}2^{k_3^*/2}\norm{v_{k_1}}{\lebL^\infty_T\lebL^2}\norm{v_{k_2}}{\lebL^\infty_T\lebL^2}\norm{u_{2,k_3}}{\lebL^\infty_T\lebL^2}
	\end{align*}
	and \eqref{eq_est_emb_spfc}.
\end{proof}
In order to estimate the remaining frequency interactions, we need another analogue of \eqref{eq_ibp_uuu}, which is given by
\begin{align}\label{eq_ibp_vvu2}
	\begin{split}
		&-\Omega(\xi_1,\xi_2)\ftxh{v}(\xi_1)\ftxh{v}(\xi_2)\ftxh{u}_2(\xi_3)
		\\=&\,
		\dt\left[\ftxh{v}(\xi_1)\ftxh{v}(\xi_2)\ftxh{u}_2(\xi_3)\right]
		+
		i\xi_1\widehat{vw}(\xi_1)\hat{v}(\xi_2)\hat{u}_2(\xi_3)
		+
		i\xi_2\hat{v}(\xi_1)\widehat{vw}(\xi_2)\hat{u}_2(\xi_3)
		+
		i\xi_3\hat{v}(\xi_1)\hat{v}(\xi_2)\widehat{u_2u_2}(\xi_3).
	\end{split}
\end{align}

\begin{lem}\label{lem_hs_hh}
	Let $s>3/2-\alpha$. Then, there exist $c,d>0$ such that we have
	\begin{align*}
		\sum_{k_1\sim k_3>n}\abs{I\!I\!I(k_1,k_3)}
		&\lesssim
		2^{-nc}\norm{v}{\spfc^{-1/2}_T}\norm{v}{\spfc^{s}_T}\norm{u_2}{\spfc^{s+2-\alpha}_T}
		\\
		&+
		T^d\norm{v}{\spfc^{-1/2}_T}\norm{v}{\spfc^{s}_T}\norm{u_2}{\spfc^{s+2-\alpha}_T}\left[\norm{w}{\spfc^s_T}+\norm{u_2}{\spfc^s_T}\right].
	\end{align*}
\end{lem}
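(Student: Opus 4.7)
The plan is to follow the blueprint of Lemma \ref{lem_h12_HH}, exploiting the fact that $u_2$ is allowed to sit in the high-regularity space $\spfc^{s+2-\alpha}_T$ to absorb the extra derivatives that the integrand $vvu_2$ (rather than $vvw$) would otherwise force us to carry. First apply the identity \eqref{eq_ibp_vvu2} inside each $I\!I\!I(k_1,k_3)$; this yields a boundary term $B\!B\!B(k_1,k_3)$ coming from the time derivative, together with three space-time integrals $I\!I\!I_1,I\!I\!I_2,I\!I\!I_3$ obtained by substituting $\widehat{vw}(\xi_1)$, $\widehat{vw}(\xi_2)$ and $\widehat{u_2u_2}(\xi_3)$ respectively. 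Each contribution carries a factor $\Omega^{-1}$ controlled via \eqref{eq_est_res}; note that in the regime $k_1\sim k_3>n$ frequency conservation forces $k_2\lesssim k_1$, so $|\xi_1^*|\sim 2^{k_1}$, $|\xi_3^*|\sim 2^{k_2}$ and hence $|\Omega|\sim 2^{\alpha k_1}2^{k_2}$.

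For the boundary term the symbol is bounded by $2^{k_1(1-\alpha)}2^{-k_2}$. Assigning dyadic weights $2^{sk_1}$, $2^{-k_2/2}$, $2^{(s+2-\alpha)k_3}$ to the three factors $\norm{v_{k_1}}{\spfb^{k_1}_T}$, $\norm{v_{k_2}}{\spfb^{k_2}_T}$ and $\norm{u_{2,k_3}}{\spfb^{k_3}_T}$ and using $k_3\sim k_1$, a direct computation leaves a residual weight of size $2^{-k_1-k_2/2}$; summation over $k_1>n$ and $k_2\in\bbn$ then produces the gain $2^{-cn}$ responsible for the first term of the claim. For each of $I\!I\!I_1,I\!I\!I_2,I\!I\!I_3$ we further localize $\xi_a,\xi_b$ dyadically and apply Lemma \ref{lem_i4}. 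The corresponding symbols satisfy $|(-i\xi_{ab})^2\Omega^{-1}(\xi_{ab},\xi_2,\xi_3)|\lesssim 2^{k_1(2-\alpha)}2^{-k_2}$ for $I\!I\!I_1$ (and its analogue $I\!I\!I_2$) and $|(-i\xi_{ab})(-i\xi_3)\Omega^{-1}(\xi_1,\xi_2,\xi_{ab})|\lesssim 2^{k_1(1-\alpha)}$ for $I\!I\!I_3$. After running through the natural sub-cases of $k_a$ versus $k_b$, summation yields the $T^d$-contributions containing $\norm{w}{\spfc^s_T}$ (from $I\!I\!I_{1,2}$) and $\norm{u_2}{\spfc^s_T}$ (from $I\!I\!I_3$), in each case paired with the high-regularity factor $\norm{u_2}{\spfc^{s+2-\alpha}_T}$.

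The main obstacle will be the sub-case $k_a\sim k_b\gtrsim k_1$ inside $I\!I\!I_3$, where both newly introduced $u_2$-factors live at very high frequency and the symbol does not decay in $k_a$. The resolution is to assign one copy of $u_{2}$ to $\spfc^{s+2-\alpha}_T$ and the other to $\spfc^s_T$; the extra weight $2^{(2-\alpha)k_a}$ provided by the high-regularity norm, combined with the factor $2^{k_3^*/2}2^{k_4^*/2}=2^{k_1/2}2^{k_2/2}$ furnished by Lemma \ref{lem_i4} and the outer prefactor $2^{2sk_1}$, closes the estimate thanks to the hypothesis $s>3/2-\alpha>-1/2$. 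No symmetrization argument of the type exploited in Lemma \ref{lem_h12_HL} is required, because the two high-frequency inputs of each resulting quadrilinear form are distinct fields (and, in the problematic sub-case above, we are free to weight the two $u_2$ factors asymmetrically).
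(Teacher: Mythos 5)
Your blueprint is the same as the paper's: apply \eqref{eq_ibp_vvu2}, split into a boundary term $B\!B\!B$ plus three bulk terms $I\!I\!I_1,I\!I\!I_2,I\!I\!I_3$, estimate the boundary term via the resonance bound and $\lebL^\infty_T\lebL^2$, and hit the bulk terms with Lemma \ref{lem_i4} after dyadic localization of $\xi_a,\xi_b$; and you correctly observe that, unlike Lemma \ref{lem_h12_HL}, no symmetrization in the symbol is used here. However there are two concrete inaccuracies worth flagging. First, the stated symbol bound for $I\!I\!I_3$ is wrong: the numerator there is $\xi_1\xi_{ab}$ with both factors of size $2^{k_1}$ (since $k_3\sim k_1$), so the correct bound is $2^{k_1(2-\alpha)}2^{-k_2}$, not $2^{k_1(1-\alpha)}$; similarly the phrase ``and its analogue $I\!I\!I_2$'' glosses over the fact that the $I\!I\!I_2$ symbol is $\xi_1\xi_2/\Omega\lesssim 2^{k_1(1-\alpha)}$, a structurally different bound. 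Second, and more substantively, you single out the sub-case $k_a\sim k_b\gtrsim k_1$ of $I\!I\!I_3$ as the ``main obstacle'' and propose the asymmetric weighting of the two $u_2$-factors. That sub-case in fact closes comfortably: distributing $2^{sk_1}$, $2^{-k_2/2}$, $2^{(s+2-\alpha)k_a}$, $2^{sk_b}$ against the weight $2^{k_1(2s+5/2-\alpha)}2^{-k_2/2}$ leaves a residual $2^{k_1(s+5/2-\alpha)}2^{-(2s+2-\alpha)k}$ with $k=k_a\sim k_b\gtrsim k_1$, which for $k\sim k_1$ is $2^{k_1(1/2-s)}$ and decays since $s>1/2$. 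The truly tight regime is instead $k_a\sim k_1\gg k_b$ (in both $I\!I\!I_1$ and $I\!I\!I_3$), where the weight is $2^{k_1(2s+2-\alpha)}2^{-k_2/2}2^{k_b/2}$ and the exponent in $k_1$ reduces to exactly zero after matching norms, so the gain comes entirely from the $2^{k_b(1/2-s)}$ factor; this is where the summability over $(k_1,k_2,k_b)$ has to be argued with care and where the $\ell^1$-in-$k_2$ sum in the $\spfc^{-1/2}_T$-slot requires attention. Your asymmetric-weighting idea for the two $u_2$'s is the right observation, but it addresses a benign case rather than the tight one.
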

\begin{proof}
	Applying \eqref{eq_ibp_vvu2} to each summand $I\!I\!I(k_1,k_3)$, we obtain
	\begin{align*}
		\abs{I\!I\!I(k_1,k_3)}
		\leq
		\abs{B\!B\!B(k_1,k_3)}
		+
		\abs{I\!I\!I_1(k_1,k_3)}
		+
		\abs{I\!I\!I_2(k_1,k_3)}
		+
		\abs{I\!I\!I_3(k_1,k_3)},
	\end{align*}
	where the terms on the right-hand side are given by
	\begin{align*}
		B\!B\!B(k_1,k_3)
		&:=
		2^{k_12s}
		\sup_{t}
		\sum_{k_2}
		\left[ \sum_{\xi_{123}=0}
		\frac{(-i\xi_1)\chi_{k_1}^2(\xi_1)\chi_{k_2}(\xi_2)\chi_{k_3}(\xi_3)}{\Omega(\xi_1,\xi_2,\xi_3)}\hat{v}(\xi_1)\hat{v}(\xi_2)\hat{u}_2(\xi_3)\right]_{0}^{t},
		\\
		I\!I\!I_1(k_1,k_3)
		&:=
		2^{k_12s}\sup_t\sum_{k_2}\int_0^t
		\sum_{\xi_{ab23}=0}
		\frac{(-i\xi_1)^2\chi_{k_1}^2(\xi_{ab})\chi_{k_2}(\xi_2)\chi_{k_3}(\xi_3)}{\Omega(\xi_{ab},\xi_2,\xi_3)}\ftxh{v}(\xi_a)\ftxh{w}(\xi_b)\ftxh{v}(\xi_2)\ftxh{u}_2(\xi_3),
		\\
		I\!I\!I_2(k_1,k_3)
		&:=
		2^{k_12s}\sup_t\sum_{k_2}\int_0^t
		\sum_{\xi_{1ab3}=0}
		\frac{(-i\xi_1)(-i\xi_2)\chi_{k_1}^2(\xi_1)\chi_{k_2}(\xi_{ab})\chi_{k_3}(\xi_3)}{\Omega(\xi_1,\xi_{ab},\xi_3)}\ftxh{v}(\xi_1)\ftxh{v}(\xi_a)\ftxh{w}(\xi_b)\ftxh{u}_2(\xi_3),
		\\
		I\!I\!I_3(k_1,k_3)
		&:=
		2^{k_12s}\sup_t\sum_{k_2}\int_0^t
		\sum_{\xi_{12ab}=0}
		\frac{(-i\xi_1)(-i\xi_3)\chi_{k_1}^2(\xi_1)\chi_{k_2}(\xi_2)\chi_{k_3}(\xi_{ab})}{\Omega(\xi_1,\xi_2,\xi_{ab})}\ftxh{v}(\xi_1)\ftxh{v}(\xi_2)\ftxh{u}_2(\xi_a)\ftxh{u}_2(\xi_b).
	\end{align*}
	Similar to the boundary term $B$ in Lemma \ref{lem_est_ert_hl}, the term $B\!B\!B$ can be estimated by
	\begin{align*}
		\abs{B\!B\!B(k_1,k_3)}
		\lesssim
		\sum_{k_2} 2^{k_1(2s+1-\alpha)}2^{-k_2/2}\norm{v_{k_1}}{\lebL^\infty_T\lebL^2}\norm{v_{k_2}}{\lebL^\infty_T\lebL^2}\norm{u_{2,k_3}}{\lebL^\infty_T\lebL^2}.
	\end{align*}	
	An application of Lemma \ref{lem_i4} in combination with the bounds ${\abs{\Omega^{-1}(\xi_{ab},\xi_2,\xi_3)(-i\xi_1)^2}\lesssim2^{k_1(2-\alpha)}2^{-k_2}}$, ${\abs{\Omega^{-1}(\xi_1,\xi_{ab},\xi_3)(-i\xi_1)(-i\xi_2)}\lesssim 2^{k_1(1-\alpha)}}$ and $\abs{\Omega^{-1}(\xi_1,\xi_2,\xi_{ab})(-i\xi_1)(-i\xi_3)}\lesssim 2^{k_1(2-\alpha)}2^{-k_2}$ leads to
	\begin{align*}
		\abs{I\!I\!I_1(k_1,k_3)}
		\lesssim
		T^d\!\!\!\!
		\sum_{k_a,k_b,k_2}\!\!\!\!
		\norm{v_{k_a}}{\spfb^{k_a}_T}\!
		\norm{w_{k_b}}{\spfb^{k_b}_T}\!
		\norm{v_{k_2}}{\spfb^{k_2}_T}\!
		\norm{u_{2,k_3}}{\spfb^{k_3}_T}\!
		\times\!
		\begin{cases}
			2^{k_1(2s+5/2-\alpha)}2^{-k_2/2}
			&\text{if }k_a\!\sim\! k_b\!\gtrsim\! k_1,
			\\
			2^{k_1(2s+2-\alpha)}2^{-k_2/2}2^{k_b/2}
			&\text{if }k_a\!\sim\! k_1\!\gg\! k_b,
			\\
			2^{k_1(2s+2-\alpha)}2^{-k_2/2}2^{k_a/2}
			&\text{if }k_b\!\sim\! k_1\!\gg\! k_a,
		\end{cases}
	\end{align*}
	and
	\begin{align*}
		\abs{I\!I\!I_2(k_1,k_3)}
		\lesssim
		T^d\!\!\!\!
		\sum_{k_2,k_a,k_b}\!\!\!\!
		\norm{v_{k_1}}{\spfb^{k_1}_T}\!
		\norm{v_{k_a}}{\spfb^{k_a}_T}\!
		\norm{w_{k_b}}{\spfb^{k_b}_T}\!
		\norm{u_{2,k_3}}{\spfb^{k_3}_T}\!
		\times\!
		\begin{cases}
			2^{k_1(2s+2-\alpha)}
			&\text{if }k_a\!\sim\! k_b\!\gtrsim\! k_1,
			\\
			2^{k_1(2s+1-\alpha)}2^{k_a/2}2^{k_b/2}
			&\text{else},
		\end{cases}
	\end{align*}
	as well as
	\begin{align*}
		\abs{I\!I\!I_3(k_1,k_3)}
		\lesssim
		T^d\!\!\!\!
		\sum_{k_2,k_a,k_b}\!\!\!\!
		\norm{v_{k_1}}{\spfb^{k_1}_T}\!
		\norm{v_{k_2}}{\spfb^{k_2}_T}\!
		\norm{u_{2,k_a}}{\spfb^{k_a}_T}\!
		\norm{u_{2,k_b}}{\spfb^{k_b}_T}\!
		\times\!
		\begin{cases}
			2^{k_1(2s+5/2-\alpha)}2^{-k_2/2}
			&\text{if }k_a\!\sim\! k_b\!\gtrsim\! k_1,
			\\
			2^{k_1(2s+2-\alpha)}2^{k_b/2}2^{-k_2/2}
			&\text{if }k_a\!\sim\! k_1\!\gg\! k_b,
			\\
			2^{k_1(2s+2-\alpha)}2^{k_a/2}2^{-k_2/2}
			&\text{if }k_b\!\sim\! k_1\!\gg\! k_a.
		\end{cases}
	\end{align*}
	After summation in $k_1$ and $k_3$, the proof is completed.
\end{proof}
\begin{lem}\label{lem_hs_lh}
	Let $s>3/2-\alpha$. Then, there exist $c,d>0$ such that we have
	\begin{align*}
		\sum_{k_1\ll k_3>n}\!\!\!
		\abs{I\!I\!I(k_1,k_3)}
		\lesssim
		2^{-nc}\!
		\norm{v}{\spfc^{-1/2}_T}\norm{v}{\spfc^{s}_T}\norm{u_2}{\spfc^{s+2-\alpha}_T}
		+\!
		T^d\norm{v}{\spfc^{-1/2}_T}\norm{v}{\spfc^{s}_T}\norm{u_2}{\spfc^{s+2-\alpha}_T}\left[\norm{w}{\spfc^s_T}+\norm{u_2}{\spfc^s_T}\right]\!.
	\end{align*}
\end{lem}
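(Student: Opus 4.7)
The plan is to mimic the proof of Lemma~\ref{lem_hs_hh} with only minor modifications. In the present regime $k_1\ll k_3>n$, the frequency constraint $\xi_{123}=0$ forces the unlocalized $v$ in the definition of $I\!I\!I(k_1,k_3)$ to carry frequency $k_2\sim k_3$, so that once again there is exactly one low-frequency factor $P_{k_1}v$ together with two high-frequency factors $P_{k_2}v$ and $P_{k_3}u_2$. I would apply the identity \eqref{eq_ibp_vvu2} to each summand $I\!I\!I(k_1,k_3)$ to obtain the decomposition
\begin{align*}
|I\!I\!I(k_1,k_3)|\leq |B\!B\!B(k_1,k_3)|+|I\!I\!I_1(k_1,k_3)|+|I\!I\!I_2(k_1,k_3)|+|I\!I\!I_3(k_1,k_3)|,
\end{align*}
with the four quantities defined by the same formulas as in Lemma~\ref{lem_hs_hh}. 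The essential change is that the resonance bound \eqref{eq_est_res} now reads $|\Omega|\sim 2^{k_3\alpha}2^{k_1}$, which is what forces us to spend regularity on $u_2$ rather than on $v$.

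For the boundary term $B\!B\!B$, I would use Cauchy--Schwarz and the embedding \eqref{eq_est_emb_spfc} together with the symbol bound $|\xi_1\Omega^{-1}|\lesssim 2^{-k_3\alpha}$, and then distribute frequency weights as follows: $2^{-k_1/2}\|P_{k_1}v\|\to \|v\|_{\spfc^{-1/2}_T}$, $2^{-k_3 s}\|P_{k_2}v\|\to\|v\|_{\spfc^s_T}$ and $2^{k_3(s+2-\alpha)}\|P_{k_3}u_2\|\to\|u_2\|_{\spfc^{s+2-\alpha}_T}$. The residual powers in $k_1$ and $k_3$ are summable under $k_1\ll k_3$, and the constraint $k_3>n$ produces the gain $2^{-nc}$.

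For the three multilinear terms $I\!I\!I_j$, I would localize $\xi_a$ and $\xi_b$ dyadically and apply Lemma~\ref{lem_i4} with the new pointwise symbol bounds
\begin{align*}
|\Omega^{-1}(\xi_{ab},\xi_2,\xi_3)\,\xi_1^2|&\lesssim 2^{k_1-k_3\alpha},\\
|\Omega^{-1}(\xi_1,\xi_{ab},\xi_3)\,\xi_1\xi_2|&\lesssim 2^{k_3(1-\alpha)},\\
|\Omega^{-1}(\xi_1,\xi_2,\xi_{ab})\,\xi_1\xi_{ab}|&\lesssim 2^{k_3(1-\alpha)},
\end{align*}
splitting each into the usual sub-cases $k_a\sim k_b$ versus $k_a\gg k_b$ or $k_b\gg k_a$, exactly as in Lemma~\ref{lem_hs_hh}. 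The weights pair as in the boundary analysis: the $P_{k_1}v$-factor absorbs $2^{-k_1/2}$ into $\|v\|_{\spfc^{-1/2}_T}$, the highest-frequency $u_2$-factor absorbs $2^{k_3(s+2-\alpha)}$ into $\|u_2\|_{\spfc^{s+2-\alpha}_T}$, and the remaining factor is measured in $\|v\|_{\spfc^s_T}$, $\|w\|_{\spfc^s_T}$ or $\|u_2\|_{\spfc^s_T}$ depending on whether it is $v$, $w$ or $u_2$. No commutator or symmetrization step is needed, because the symbols depend only on the combined low-frequency variable and no cancellation between two high-frequency factors is required here.

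The main obstacle I anticipate is the term $I\!I\!I_3$ in the sub-case $k_a\sim k_b\gtrsim k_3$, where the quadratic nonlinearity $u_2^2$ places both high frequencies on $u_2$ and the symbol bound still costs two derivatives in $k_3$. There is no symmetry gain available in this configuration, so the loss must be fully absorbed by $\|u_2\|_{\spfc^{s+2-\alpha}_T}$; fortunately the shift from $s$ to $s+2-\alpha$ is exactly calibrated for this. Once this case is handled, the summation in $k_1\ll k_3$ closes for every $s>3/2-\alpha$, using either the smallness factor $T^d$ coming from Lemma~\ref{lem_i4} or the gain $2^{-nc}$ coming from $k_3>n$, and Lemma~\ref{lem_hs_lh} follows.
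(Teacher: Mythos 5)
Your proposal is correct and follows exactly the approach the paper takes, which consists of one sentence: "Since we still have one high- and one low-frequency factor $v$ (see \eqref{eq_Y_bound}), we can repeat the proof of the previous lemma." You in fact supply more detail than the paper does, correctly updating the resonance bound to $\abs{\Omega}\sim 2^{k_3\alpha}2^{k_1}$ and the three pointwise symbol estimates, and correctly observing that no symmetrization is needed in this regime since the hard commutator argument of Lemma~\ref{lem_hs_hl} is only triggered when both high-frequency slots are occupied by $v$ at the top frequency. (One minor imprecision: in $I\!I\!I_3$ the symbol costs $2^{k_3(1-\alpha)}$, a single $(1-\alpha)$-derivative in $k_3$, not two; the full loss of roughly $2^{k_3(5/2-\alpha)}$ only accumulates after the Bernstein factors from Lemma~\ref{lem_i4}, and the $s+2-\alpha$ regularity on $u_2$ is in any event dictated by the statement to accommodate the $\spfc^{-1/2}_T$-norm on $v$ needed for the Bona--Smith argument.)
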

\begin{proof}
	Since we still have one high- and one low-frequency factor $v$ (see \eqref{eq_Y_bound}), we can repeat the proof of the previous lemma.
\end{proof}
\begin{lem}\label{lem_hs_hl}
	Let $s>3/2-\alpha$. Then, there exist $c,d>0$ such that we have
	\begin{align*}
		\sum_{k_3\ll k_1>n}\abs{I\!I\!I(k_1,k_3)}
		&\lesssim
		2^{-nc}\norm{v}{\spfc^s_T}^2\norm{u_2}{\spfc^s_T}
		+
		T^d\norm{v}{\spfc^s_T}^2\norm{u_2}{\spfc^s_T}^2
		+
		T^d\norm{v}{\spfc^s_T}^2\norm{w}{\spfc^s_T}\norm{u_2}{\spfc^s_T}
		\\
		&+
		T^d\norm{v}{\spfc^{-1/2}_T}\norm{v}{\spfc^s_T}\norm{u_2}{\spfc^s_T}\norm{u_2}{\spfc^{s+2-\alpha}_T}
		+
		T^d\norm{v}{\spfc^{-1/2}_T}\norm{v}{\spfc^s_T}\norm{w}{\spfc^s_T}\norm{u_2}{\spfc^{s+2-\alpha}_T}.
	\end{align*}
\end{lem}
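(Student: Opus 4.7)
The strategy parallels the proof of Lemma \ref{lem_h12_HL}, adapted to the regularity $\lebH^s$ with $s>3/2-\alpha$ and to the replacement of one $v$ factor by $u_2$. Since both high-frequency slots in the integrand are occupied by $v$, I would first shift the derivative from $\dx v$ onto the low-frequency factor $P_{k_3}u_2$ via integration by parts in space. Following the symmetrization of Lemma \ref{lem_h12_HL}, this yields the decomposition
\begin{equation*}
\int_\bbt P_{k_1}^2(\dx v)\,v\,P_{k_3}u_2 = -T^1 + T^2 - T^3,
\end{equation*}
with $T^1 := \int P_{k_1}^2 v\cdot v\,P_{k_3}\dx u_2$, $T^2 := \tfrac12\int (P_{k_1}v)^2\,P_{k_3}\dx u_2$ and $T^3 := \int P_{k_1}v\bigl[P_{k_1}(\dx v\,P_{k_3}u_2) - P_{k_1}(\dx v)P_{k_3}u_2\bigr]$. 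Setting $I\!I\!I^j(k_1,k_3) := 2^{2sk_1}\sup_t\int_0^t T^j$, it suffices to bound each $\abs{I\!I\!I^j}$ separately.

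For $I\!I\!I^1$ and $I\!I\!I^2$, the next step is a time integration by parts via the trilinear identity \eqref{eq_ibp_vvu2}, producing a boundary term $B\!B\!B^j$ plus three quadrilinear pieces $I\!I\!I^j_1, I\!I\!I^j_2, I\!I\!I^j_3$, each weighted by $1/\Omega$. The boundary terms are controlled directly via \eqref{eq_est_res} and \eqref{eq_est_emb_spfc}. The quadrilinear pieces are dyadically localized in $\xi_a,\xi_b$ and estimated via Lemma \ref{lem_i4} combined with the pointwise symbol bounds $\abs{\xi_1^2/\Omega}\lesssim 2^{k_1(2-\alpha)}2^{-k_2}$, $\abs{\xi_1\xi_2/\Omega}\lesssim 2^{k_1(1-\alpha)}$ and $\abs{\xi_1\xi_3/\Omega}\lesssim 2^{k_1(2-\alpha)}2^{-k_2}$, in direct analogy with the treatment of $I\!I^1$ in Lemma \ref{lem_h12_HL}. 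Crucially, in the piece stemming from the $\widehat{u_2 u_2}$ factor in \eqref{eq_ibp_vvu2}, the subcase $k_a\sim k_b\gg k_3$ forces a loss of derivatives that can only be absorbed by estimating one $u_2$ factor in $\spfc^{s+2-\alpha}_T$ and the corresponding $v$ factor in $\spfc^{-1/2}_T$; this is precisely how the mixed-norm contributions on the right-hand side arise.

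For $I\!I\!I^3$ I would follow the treatment of $I\!I^3$ in Lemma \ref{lem_h12_HL}. Writing the commutator as a trilinear multiplier with symbol
\begin{equation*}
\nu(\xi_1,\xi_2,\xi_3) = \xi_2\,\chi_{k_1}(\xi_1)\bigl[\chi_{k_1}(\xi_{23}) - \chi_{k_1}(\xi_2)\bigr]\chi_{k_3}(\xi_3)
\end{equation*}
of modulus bounded by $2^{k_3}$ (by a mean-value estimate on $\chi_{k_1}$, which already captures the gain at the low frequency), a further application of \eqref{eq_ibp_vvu2} splits $I\!I\!I^3$ into a boundary term and four quadrilinear pieces $I\!I\!I^3_1, I\!I\!I^3_2, I\!I\!I^3_{12}, I\!I\!I^3_3$; all but $I\!I\!I^3_{12}$ are handled by direct symbol bounds combined with Lemma \ref{lem_i4}.

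The main obstacle is $I\!I\!I^3_{12}$, where both $v$-slots carry comparable high frequency and the naive symbol bound $2^{k_1(1-\alpha)}$ is insufficient for $\alpha<1$. Mimicking the proof of Lemma \ref{lem_h12_HL}, I would exploit the symmetry of the integrand in the two $v$-variables $\xi_a,\xi_2$ to symmetrize the symbol, and then decompose the symmetrized symbol as $A_1+A_2+A_3$: $A_1$ is controlled via $\xi_{ab}+\xi_{2b}=\xi_b-\xi_3$ together with \eqref{eq_est_res}, $A_2$ by a mean-value bound on $\chi_{k_1}$, and $A_3$ by Corollary \ref{cor_est_res_inv}. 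This yields the sharpened pointwise bound $2^{-k_2\alpha}\,2^{\max\{k_3,k_b\}}$ on the symmetrized symbol, which is strong enough to close after Lemma \ref{lem_i4} and summation over $k_1,k_3$ in the regime $k_3\ll k_1>n$. Combining the bounds for $I\!I\!I^1, I\!I\!I^2, I\!I\!I^3$ produces the claimed estimate.
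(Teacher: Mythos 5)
There is a genuine gap. Your high-level scaffolding (spatial integration by parts into $T^1,T^2,T^3$, temporal integration by parts via \eqref{eq_ibp_vvu2}, boundary terms, quadrilinear pieces with Lemma \ref{lem_i4}, symmetrization of the symbol into $A_1+A_2+A_3$ for the coherent $v$--$v$ interaction) matches the paper's strategy. But you misidentify where the problematic interaction is, and consequently you omit the one new idea that makes this lemma harder than Lemma \ref{lem_h12_HL}.

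You attribute the mixed terms $\norm{v}{\spfc^{-1/2}_T}\norm{u_2}{\spfc^{s+2-\alpha}_T}$ on the right-hand side to the $\widehat{u_2u_2}$ piece ($I\!I\!I^j_3$) in the subcase $k_a\sim k_b\gg k_3$. That piece is actually harmless: both $u_2$'s sit at a high frequency comparable to $k_1$, and the symbol factor $2^{k_1(2-\alpha)}$ can be split evenly onto the two $u_2$'s as $2^{k_a(1-\alpha/2)}2^{k_b(1-\alpha/2)}$, landing in $\norm{v}{\spfc^s_T}^2\norm{u_2}{\spfc^s_T}^2$ since $s>1-\alpha/2$. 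The real obstruction — which your proposal never addresses — occurs in the $vw$-pieces $I\!I\!I^1_1$, $I\!I\!I^1_2$ (and the corresponding commutator pieces) in the subcase $k_b\sim k_1\gg k_a$: here the high frequencies land on one $v$ and one $w$, so the integrand is \emph{not} symmetric in $\xi_b$ and $\xi_2$, and the direct symbol bound $2^{k_1(1-\alpha)}$ leaves an uncontrolled factor $\sim 2^{k_1(2-\alpha)}$ after placing $v$ at $\xi_a$ in $\spfc^{-1/2}_T$. In Lemma \ref{lem_h12_HL} this subcase was handled directly because the target regularity $-1/2$ left slack; at regularity $s>3/2-\alpha$ it does not. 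Your phrase ``all but $I\!I\!I^3_{12}$ are handled by direct symbol bounds'' is therefore false, and the plan would not close.

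The paper's resolution is to symmetrize in $\xi_b$ and $\xi_2$ \emph{anyway}, writing $2\sum m\,r = \sum\left[m(\ldots,\xi_b,\xi_2,\ldots)+m(\ldots,\xi_2,\xi_b,\ldots)\right]r + \sum m\left[r(\ldots,\xi_2,\xi_b,\ldots)-r(\ldots,\xi_b,\xi_2,\ldots)\right]$ as in \eqref{eq_split_m}, and then to observe that the antisymmetric residue $\hat{w}(\xi_b)\hat{v}(\xi_2)-\hat{v}(\xi_b)\hat{w}(\xi_2)=2\left[\hat{u}_2(\xi_b)\hat{v}(\xi_2)-\hat{v}(\xi_b)\hat{u}_2(\xi_2)\right]$ (the $vv$ parts cancel since $w=v+2u_2$) places a factor $u_2$ at high frequency; this is what permits the use of $\norm{u_2}{\spfc^{s+2-\alpha}_T}$ paired with $\norm{v}{\spfc^{-1/2}_T}$ and produces the last two terms in the claimed bound. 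The analogous splits for the commutator term $I\!I\!I^3$ (the paper's decomposition $I\!I\!I^{3,\#}_{12} = G + H$) serve the same purpose. Without this anticommutator trick your argument does not control the $k_b\sim k_1\gg k_a$ region at the stated regularity.
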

\begin{proof}
	Recall that we have
	\begin{align*}
		I\!I\!I(k_1,k_3)
		=
		2^{k_12s}
		\sup_t\int_0^t\int_\bbt P_{k_1}^2(\dx v) vP_{k_3}u_2.
	\end{align*}
	The spatial integral can be written as follows:
	\begin{align*}
		\int_\bbt  P_{k_1}^2(\dx v)vP_{k_3}u_2
		&=\!
		-\!
		\int_\bbt\! P_{k_1}^2vvP_{k_3}\dx u_2
		-
		\int_\bbt\! P_{k_1}^2v\dx v P_{k_3}u_2
		\\&=\!
		-\!
		\underset{=:T_1}{\underbrace{\int_\bbt\! P_{k_1}^2v vP_{k_3}\dx u_2}}
		+
		\underset{=:T_2}{\underbrace{\frac{1}{2}\!\int_\bbt\! P_{k_1}v P_{k_1}vP_{k_3}\dx u_2}}
		-\!
		\underset{=:T_3}{\underbrace{\int_\bbt\! P_{k_1}v\left[P_{k_1}(\dx vP_{k_3}u_2)\!-\!P_{k_1}\dx vP_{k_3}u_2\right]}}.
	\end{align*}
	Together with
	\begin{align*}
		I\!I\!I^j(k_1,k_3)
		:=
		2^{-k_1}\sup_t\int_0^t T^j,
		\qquad
		j\in[3],
	\end{align*}
	we conclude
	\begin{align*}
		\abs{I\!I\!I(k_1,k_3)}
		\leq
		\abs{I\!I\!I^1(k_1,k_3)}
		+
		\abs{I\!I\!I^2(k_1,k_3)}
		+
		\abs{I\!I\!I^3(k_1,k_3)}.
	\end{align*}
	Next, we apply \eqref{eq_ibp_vvu2} to each $I\!I\!I^j(k_1,k_3)$, $j\in[2]$, and obtain 
	\begin{align*}
		\abs{I\!I\!I^j(k_1,k_3)}
		\leq
		\abs{B\!B\!B^j(k_1,k_3)}+\abs{I\!I\!I^j_1(k_1,k_3)}+\abs{I\!I\!I^j_2(k_1,k_3)}+\abs{I\!I\!I^j_3(k_1,k_3)},
	\end{align*}
	where the terms on the right-hand side for $j=1$ are given by
	\begin{align*}
		B\!B\!B^1(k_1,k_3)
		&:=
		2^{k_12s}
		\sup_{t}\sum_{k_2}
		\left[ 	\sum_{\xi_{123}=0}\frac{-i\xi_3\chi_{k_1}^2(\xi_1)\chi_{k_2}(\xi_2)\chi_{k_3}(\xi_3)}{\Omega(\xi_1,\xi_2,\xi_3)}\hat{v}(\xi_1)\hat{v}(\xi_2)\hat{u}_2(\xi_3)\right]_{0}^{t},
		\\
		I\!I\!I^1_1(k_1,k_3)
		&:=
		2^{k_12s}\sup_t\sum_{k_2}\int_0^t
		\sum_{\xi_{ab23}=0}
		\frac{(-i\xi_{ab})(-i\xi_3)\chi_{k_1}^2(\xi_{ab})\chi_{k_2}(\xi_2)\chi_{k_3}(\xi_3)}{\Omega(\xi_{ab},\xi_2,\xi_3)}\hat{v}(\xi_a)\hat{w}(\xi_b)\hat{v}(\xi_2)\hat{u}_2(\xi_3),
		\\
		I\!I\!I^1_2(k_1,k_3)
		&:=
		2^{k_12s}\sup_t\sum_{k_2}\int_0^t
		\sum_{\xi_{1ab3}=0}
		\frac{(-i\xi_{ab})(-i\xi_3)\chi_{k_1}^2(\xi_1)\chi_{k_2}(\xi_{ab})\chi_{k_3}(\xi_3)}{\Omega(\xi_1,\xi_{ab},\xi_3)}\hat{v}(\xi_1)\hat{v}(\xi_a)\hat{w}(\xi_b)\hat{u}_2(\xi_3),
		\\
		I\!I\!I^1_3(k_1,k_3)
		&:=
		2^{k_12s}\sup_t\sum_{k_2}\int_0^t
		\sum_{\xi_{12ab}=0}
		\frac{(-i\xi_{ab})^2\chi_{k_1}^2(\xi_1)\chi_{k_2}(\xi_2)\chi_{k_3}(\xi_{ab})}{\Omega(\xi_1,\xi_2,\xi_{ab})}\hat{v}(\xi_1)\hat{v}(\xi_2)\ftxh{u}_2(\xi_{a})\ftxh{u}_2(\xi_{b}).
	\end{align*}
	As before, the terms for $j=2$ follow from obvious modifications.
	We proceed by bounding the terms $I\!I\!I^1$, $I\!I\!I^2$ and $I\!I\!I^3$ separately.\\
	
	\textbf{Estimating $I\!I\!I^1$.} Note that $k_1\sim k_2$ holds.
	As in Lemma \ref{lem_est_ert_hl}, the boundary term admits the estimate
	\begin{align*}
		\abs{B\!B\!B^1(k_1,k_3)}
		\lesssim
		\sum_{k_2}2^{k_2(2s-\alpha)}2^{k_3/2}\norm{v_{k_1}}{\lebL^\infty_T\lebL^2}\norm{v_{k_2}}{\lebL^\infty_T\lebL^2}\norm{u_{2,k_3}}{\lebL^\infty_T\lebL^2}.
	\end{align*}
	Let us continue with $I\!I\!I_3^1$. An application of Lemma \ref{lem_i4} together with ${\abs{\Omega^{-1}(\xi_{ab},\xi_2,\xi_3)(-i\xi_{ab})^2}\lesssim 2^{-k_1\alpha}2^{k_3}}$ yields
	\begin{align*}
		\abs{I\!I\!I_3^1(k_1,k_3)}
		\leq
		T^d
		\sum_{k_2}
		\norm{v_{k_1}}{\spfb^{k_1}_T}
		\norm{v_{k_2}}{\spfb^{k_2}_T}
		\norm{u_{2,k_a}}{\spfb^{k_a}_T}
		\norm{u_{2,k_b}}{\spfb^{k_b}_T}
		\times
		\begin{cases}
			2^{k_1(2s+2-\alpha)}
			&\text{if }k_a\sim k_b\gtrsim k_1,
			\\
			2^{k_1(2s+3/2-\alpha)}2^{k_b/2}
			&\text{if }k_a\sim k_1\gg k_b,
			\\
			2^{k_1(2s+3/2-\alpha)}2^{k_a/2}
			&\text{if }k_b\sim k_1\gg k_a.
		\end{cases}
	\end{align*}
	Similarly, using $\abs{\Omega^{-1}(\xi_{ab},\xi_2,\xi_3)(-i\xi_{ab})(-i\xi_3)}\lesssim 2^{k_1(1-\alpha)}$, we bound $I\!I\!I^1_1(k_1,k_3)$ in the case ${k_a\sim k_b\gtrsim k_1}$ by
	\begin{align*}
		\abs{I\!I\!I^1_1(k_1,k_3)}_{k_a\sim k_b\gtrsim k_1}
		\lesssim
		T^d
		\sum_{k_2, k_a, k_b}
		2^{k_1(2s+3/2-\alpha)}2^{k_3/2}
		\norm{v_{k_a}}{\spfb^{k_a}_T}
		\norm{w_{k_b}}{\spfb^{k_b}_T}
		\norm{v_{k_2}}{\spfb^{k_2}_T}
		\norm{u_{2,k_3}}{\spfb^{k_3}_T}.
	\end{align*}
	To treat the case $k_a\sim k_1\gg k_b$, we proceed as in Lemma \ref{lem_h12_HL}. The modulus of $I\!I\!I^1_1(k_1,k_3)$ is bounded by
	\begin{align*}
		2^{k_12s}\sum_{k_a\sim k_2\gg k_b}\abs{\int_0^t\sum_{\xi_{ab23}=0}m(\xi_a,\xi_b,\xi_2,\xi_3)\underset{=:r(\xi_a,\xi_b,\xi_2,\xi_3)}{\underbrace{\hat{v}(\xi_a)\hat{w}(\xi_b)\hat{v}(\xi_2)\hat{u}_2(\xi_3)}}},
	\end{align*}
	where $m$ is defined in \eqref{eq_def_m}. Using the symmetry of $r$ in its first and third variable, we write
	\begin{align*}
		2\sum_{\xi_{ab23}=0} m(\xi_a,\xi_b,\xi_2,\xi_3)r(\xi_a,\xi_b,\xi_2,\xi_3)
		=
		\sum_{\xi_{ab23}=0} \left[m(\xi_a,\xi_b,\xi_2,\xi_3)+m(\xi_2,\xi_b,\xi_a,\xi_3)\right]r(\xi_a,\xi_b,\xi_2,\xi_3).
	\end{align*}
	Moreover, recall that 
	\begin{align*}
		\abs{m(\xi_a,\xi_b,\xi_2,\xi_3)+m(\xi_2,\xi_b,\xi_a,\xi_3)}
		\lesssim
		\sum_{j\in[5]}\abs{m_j}
		\lesssim
		2^{-k_1\alpha}2^{\max\{k_b,k_3\}}
	\end{align*}
	holds. Combining the last observations with Lemma \ref{lem_i4}, we arrive at
	\begin{align*}
		\abs{I\!I\!I^1_1(k_1,k_3)}_{k_a\sim k_1\gg k_b}
		\lesssim
		T^d
		\sum_{k_2, k_a, k_b}
		2^{k_2(2s-\alpha)}2^{\max\{k_3,k_b\}}2^{k_3/2}2^{k_b/2}
		\norm{v_{k_a}}{\spfb^{k_a}_T}
		\norm{w_{k_b}}{\spfb^{k_b}_T}
		\norm{v_{k_2}}{\spfb^{k_2}_T}
		\norm{u_{2,k_3}}{\spfb^{k_3}_T}.
	\end{align*}
	In the case $k_b\sim k_1\gg k_a$, $r$ is not symmetric in its second and third variable. Hence, we only obtain
	\begin{align}\label{eq_split_m}
		\begin{split}
			2\sum_{\xi_{ab23}=0} m(\xi_a,\xi_b,\xi_2,\xi_3)r(\xi_a,\xi_b,\xi_2,\xi_3)
			&=
			\sum_{\xi_{ab23}=0} 	\left[m(\xi_a,\xi_b,\xi_2,\xi_3)+m(\xi_a,\xi_2,\xi_b,\xi_3)\right]r(\xi_a,\xi_b,\xi_2,\xi_3)
			\\&+
			\sum_{\xi_{ab23}=0}m(\xi_a,\xi_2,\xi_b,\xi_3) \left[r(\xi_a,\xi_2,\xi_b,\xi_3)-r(\xi_a,\xi_b,\xi_2,\xi_3)\right].
		\end{split}
	\end{align}
	Similar to Lemma \ref{lem_h12_HL}, we can write $\left[m(\xi_a,\xi_b,\xi_2,\xi_3)+m(\xi_a,\xi_2,\xi_b,\xi_3)\right]=\sum_{j\in[5]} m'_j$ for some appropriately chosen summands $m'_j$ with modulus bounded by $\smash{2^{\max\{k_3,k_a\}}2^{-k_2\alpha}}$. This allows us to estimate the first sum on the right-hand side of \eqref{eq_split_m} as before. For the second sum, observe that
	\begin{align*}
		\left[r(\xi_a,\xi_2,\xi_b,\xi_3)-r(\xi_a,\xi_b,\xi_2,\xi_3)\right]
		&=
		(-i\xi_3)\chi_{k_3}(\xi_3)\chi_{k_b}(\xi_b)\hat{v}(\xi_a)\hat{w}(\xi_3)\left[\hat{w}(\xi_2)\hat{v}(\xi_b)-\hat{w}(\xi_b)\hat{v}(\xi_2)\right]
		\\&=
		2(-i\xi_3)\chi_{k_3}(\xi_3)\chi_{k_b}(\xi_b)\hat{v}(\xi_a)\hat{w}(\xi_3)\left[\hat{u}_2(\xi_2)\hat{v}(\xi_b)-\hat{u}_2(\xi_b)\hat{v}(\xi_2)\right]
	\end{align*}
	holds, which is advantageous since a factor $u_2$ appears localized to a high-frequency variable. Using the bound $\abs{m(\xi_a,\xi_2,\xi_b,\xi_3)}\lesssim 2^{k_1(1-\alpha)}$, Lemma \ref{lem_i4} implies
	\begin{align*}
		\abs{I\!I\!I^1_1(k_1,k_3)}_{k_b\sim k_1\gg k_a}
		&\lesssim
		T^d
		\sum_{k_2,k_a,k_b}
		2^{k_2(2s-\alpha)}2^{k_3^*3/2}2^{k_4^*/2}
		\norm{v_{k_a}}{\spfb^{k_a}_T}
		\norm{w_{k_b}}{\spfb^{k_b}_T}
		\norm{v_{k_2}}{\spfb^{k_2}_T}
		\norm{u_{2,k_3}}{\spfb^{k_3}_T}
		\\
		&+
		T^d
		\sum_{k_2,k_a,k_b}
		2^{k_2(2s+1-\alpha)}2^{k_a/2}2^{k_3/2}
		\norm{v_{k_a}}{\spfb^{k_a}_T}\norm{u_{2,k_3}}{\spfb^{k_3}_T}
		\\
		&\qquad\times
		\left[
		\norm{u_{2,k_b}}{\spfb^{k_b}_T}
		\norm{v_{k_2}}{\spfb^{k_2}_T}
		+
		\norm{v_{k_b}}{\spfb^{k_b}_T}
		\norm{u_{2,k_2}}{\spfb^{k_2}_T}
		\right].
	\end{align*}
	We estimate $I\!I\!I^1_2$ by the same arguments.\\

	\textbf{Estimating $I\!I\!I^2$.}
	As in Lemma \ref{lem_h12_HL}, we can repeat the arguments used for the estimation of $I\!I\!I_1$ by replacing every $\chi_{k_1}^2$ and $\chi_{k_2}$ by $\chi_{k_1}$.\\
	
	\textbf{Estimating $I\!I\!I^3$.}
	Recall that we have
	\begin{align*}
		I\!I\!I^3(k_1,k_3)
		&=
		2^{k_12s}\int_0^T
		\int_\bbt P_{k_1}v\left[P_{k_1}(\dx vP_{k_3}u_2)-P_{k_1}\dx vP_{k_3}u_2\right]
		\\&=
		2^{k_12s}\int_0^T
		\sum_{\xi_{123}=0}\underset{=:\nu(\xi_1,\xi_2,\xi_3)}{\underbrace{\xi_2\chi_{k_1}(\xi_1)\left[\chi_{k_1}(\xi_{23})-\chi_{k_1}(\xi_2)\right]\chi_{k_3}(\xi_3)}}\hat{v}(\xi_1)\hat{v}(\xi_2)\hat{u}_2(\xi_3).
	\end{align*}
	Together with \eqref{eq_ibp_vvu2}, it follows
	\begin{align*}
		\abs{I\!I\!I^3(k_1,k_3)}
		\leq
		\abs{B\!B\!B^3(k_1,k_3)}
		+
		\abs{I\!I\!I^3_1(k_1,k_3)}
		+
		\abs{I\!I\!I^3_2(k_1,k_3)}
		+
		\abs{I\!I\!I^{3,*}_{12}(k_1,k_3)}
		+
		\abs{I\!I\!I^{3,\#}_3(k_1,k_3)},
	\end{align*}
	where the terms on the right-hand side are given by
	\begin{align*}
		B\!B\!B^3(k_1,k_3)
		&:=
		2^{k_12s}
		\sup_t\left[\sum_{\xi_{123}=0}\frac{\nu}{\Omega}(\xi_1,\xi_2,\xi_3)\hat{v}(\xi_1)\hat{v}(\xi_2)\hat{u}_2(\xi_3)\right]_0^t,
		\\
		I\!I\!I^{3,\times}_{12}(k_1,k_3)
		&:=
		2^{k_12s}\sup_t\int_0^t\sum_{\substack{\xi_{ab23}=0\\ \xi_a\sim\xi_b}}
		\left[\xi_{ab}\frac{\nu}{\Omega}(\xi_{ab},\xi_2,\xi_3)+\xi_{ab}\frac{\nu}{\Omega}(\xi_2,\xi_{ab},\xi_3)\right]
		\hat{v}(\xi_a)\hat{w}(\xi_b)\hat{v}(\xi_2)\hat{u}_2(\xi_3),
		\\
		I\!I\!I^{3,*}_{12}(k_1,k_3)
		&:=
		2^{k_12s}\sup_t\int_0^t\sum_{\substack{\xi_{ab23}=0\\ \xi_a\sim\xi_2\gg\xi_b}}
		\left[\xi_{ab}\frac{\nu}{\Omega}(\xi_{ab},\xi_2,\xi_3)+\xi_{2b}\frac{\nu}{\Omega}(\xi_a,\xi_{2b},\xi_3)\right]
		\hat{v}(\xi_a)\hat{w}(\xi_b)\hat{v}(\xi_2)\hat{u}_2(\xi_3),
		\\
		I\!I\!I^{3,\#}_{12}(k_1,k_3)
		&:=
		2^{k_12s}\sup_t\int_0^t\sum_{\substack{\xi_{ab23}=0\\ \xi_b\sim\xi_2\gg\xi_a}}
		\left[\xi_{ab}\frac{\nu}{\Omega}(\xi_{ab},\xi_2,\xi_3)+\xi_{ab}\frac{\nu}{\Omega}(\xi_2,\xi_{ab},\xi_3)\right]
		\hat{v}(\xi_a)\hat{w}(\xi_b)\hat{v}(\xi_2)\hat{u}_2(\xi_3),
		\\
		I\!I\!I^{3}_3(k_1,k_3)
		&:=
		2^{k_12s}\sup_t\int_0^t
		\sum_{\xi_{12ab}=0}
		(-i\xi_{ab})\frac{\nu}{\Omega}(\xi_1,\xi_2,\xi_{ab})\hat{v}(\xi_1)\hat{v}(\xi_2)\ftxh{u}_2(\xi_{a})\ftxh{u}_2(\xi_{b}).
	\end{align*}
	Note that the variable $\xi_2$ is already of size $2^{k_1}$ due to the localization in $\nu(\xi_1,\xi_2,\xi_3)$. As before, we still localize each $\xi_2$ to dyadic frequency ranges by inserting $1=\sum_{k_2}\chi_{k_2}(\xi_2)$ to improve notation. We also localize $\xi_a$ and $\xi_b$.
	
	Similar to Lemma \ref{lem_est_ert_hl}, the boundary term $B\!B\!B^3$ can be estimated by
	\begin{align*}
		\abs{B\!B\!B^3(k_1,k_3)}
		\lesssim
		\sum_{k_2}2^{k_2(2s-\alpha)}2^{k_3/2}\norm{v_{k_1}}{\lebL^\infty_T\lebL^2}\norm{v_{k_2}}{\lebL^\infty_T\lebL^2}\norm{u_{2,k_3}}{\lebL^\infty_T\lebL^2}.
	\end{align*}
	To bound $I\!I\!I_3^3$, we use $\abs{\xi_{ab}\frac{\nu}{\Omega}(\xi_1,\xi_2,\xi_{ab})}\lesssim 2^{-k_1\alpha}2^{k_3}$ and apply Lemma \ref{lem_i4} leading to
	\begin{align*}
		\abs{I\!I\!I_3^3(k_1,k_3)}
		\lesssim
		T^d\!\!\!
		\sum_{k_2, k_a, k_b}\!\!
		\norm{v_{k_1}}{\spfb^{k_1}_T}\!
		\norm{v_{k_2}}{\spfb^{k_2}_T}\!
		\norm{u_{2,k_a}}{\spfb^{k_a}_T}\!
		\norm{u_{2,k_b}}{\spfb^{k_b}_T}\!
		\times\!
		\begin{cases}
			2^{k_1(2s-\alpha)}2^{k_3^*/2}2^{k_4^*/2}
			&\text{if }k_a\!\sim\! k_b\!\gtrsim\! k_3,
			\\
			2^{k_1(2s-\alpha)}2^{k_a3/2}2^{k_b/2}
			&\text{if }k_3\!\sim\! k_a\!\gg\! k_b,
			\\
			2^{k_1(2s-\alpha)}2^{k_b3/2}2^{k_a/2}
			&\text{if }k_3\!\sim\! k_b\!\gg\! k_a.
		\end{cases}
	\end{align*}
	Next, we estimate $I\!I\!I_{12}^{3,\times}$ with the help of the bound $\abs{\xi_{ab}\frac{\nu}{\Omega}(\xi_{ab},\xi_2,\xi_3)+\xi_{ab}\frac{\nu}{\Omega}(\xi_{ab},\xi_2,\xi_3)}\lesssim 2^{k_1(1-\alpha)}$. We conclude
	\begin{align*}
		\abs{I\!I\!I_{12}^{3,\times}(k_1,k_3)}
		\lesssim
		T^d
		\sum_{k_2,k_a,k_b}
		2^{k_1(2s+2-\alpha)}
		\norm{v_{k_a}}{\spfb^{k_a}_T}\norm{w_{k_b}}{\spfb^{k_b}_T}\norm{v_{k_2}}{\spfb^{k_2}_T}\norm{u_{2,k_3}}{\spfb^{k_3}_T}.
	\end{align*}
	To handle $I\!I\!I_{12}^{3,*}$, we recall from the proof of Lemma \ref{lem_h12_HL} that
	\begin{align*}
		\abs{\xi_{ab}\frac{\nu}{\Omega}(\xi_{ab},\xi_2,\xi_3)+\xi_{2b}\frac{\nu}{\Omega}(\xi_a,\xi_{2b},\xi_3)}
		\lesssim
		2^{-k_2\alpha}2^{\max\{k_3,k_b\}}
	\end{align*}
	holds.
	Hence, an application of Lemma \ref{lem_i4} yields
	\begin{align*}
		\abs{I\!I\!I_{12}^{3,*}(k_1,k_3)}
		\lesssim
		T^d
		\sum_{k_a,k_b,k_2}
		2^{k_2(2s-\alpha)}2^{\max\{k_3,k_b\}}2^{k_3/2}2^{k_b/2}
		\norm{v_{k_a}}{\spfb^{k_a}_T}
		\norm{w_{k_b}}{\spfb^{k_b}_T}
		\norm{v_{k_2}}{\spfb^{k_2}_T}
		\norm{u_{2,k_3}}{\spfb^{k_3}_T}.
	\end{align*}
	Now, let us proceed with $I\!I\!I_{12}^{3,\#}(k_1,k_3)$. We write
	\begin{align*}
		I\!I\!I_{12}^{3,\#}(k_1,k_3)
		&=
		2^{k_12s}\sup_t\int_0^t\sum_{\substack{\xi_{ab23}=0\\ \xi_b\sim\xi_2\gg\xi_a}}
		\xi_{ab}\frac{\nu}{\Omega}(\xi_{ab},\xi_2,\xi_3)\left[\hat{v}(\xi_2)\hat{w}(\xi_b)
		-
		\hat{v}(\xi_b)\hat{w}(\xi_2)\right]\hat{v}(\xi_a)\hat{u}_2(\xi_3)
		&=:G
		\\&+
		2^{k_12s}\sup_t\int_0^t\sum_{\substack{\xi_{ab23}=0\\ \xi_b\sim\xi_2\gg\xi_a}}
		\left[\xi_{ab}\frac{\nu}{\Omega}(\xi_{ab},\xi_2,\xi_3)
		+
		\xi_{a2}\frac{\nu}{\Omega}(\xi_{b},\xi_{2a},\xi_3)\right]\hat{v}(\xi_a)\hat{v}(\xi_b)\hat{w}(\xi_2)\hat{u}_2(\xi_3)
		&=:H.
	\end{align*}
	Above, we insert the equation $\hat{v}(\xi_2)\hat{w}(\xi_b)
	-
	\hat{v}(\xi_b)\hat{w}(\xi_2)=2\hat{v}(\xi_b)\hat{u}_2(\xi_2)-2\hat{v}(\xi_2)\hat{u}_2(\xi_b)$, using the bound ${\abs{\xi_{ab}\frac{\nu}{\Omega}(\xi_{ab},\xi_2,\xi_3)}\lesssim2^{k_1(1-\alpha)}}$ and applying Lemma \ref{lem_i4}, we observe
	\begin{align*}
		\abs{G}
		&\lesssim
		T^d
		\sum_{k_2,k_a,k_b}
		2^{k_1(2s+1-\alpha)}2^{k_a/2}2^{k_3/2}
		\norm{v_{k_a}}{\spfb^{k_a}_T}
		\left[\norm{u_{2,k_b}}{\spfb^{k_b}_T}\norm{v_{k_2}}{\spfb^{k_2}_T}
		+
		\norm{v_{k_b}}{\spfb^{k_b}_T}\norm{u_{2,k_2}}{\spfb^{k_2}_T}
		\right]
		\norm{u_{2,k_3}}{\spfb^{k_3}_T}.
	\end{align*}
	To bound $H$, we note that we have
	\begin{align*}
		\xi_{ab}\frac{\nu}{\Omega}(\xi_{ab},\xi_2,\xi_3)
		+
		\xi_{a2}\frac{\nu}{\Omega}(\xi_{b},\xi_{2a},\xi_3)
		&=
		\left[\xi_{ab}+\xi_{a2}\right]\frac{\nu}{\Omega}(\xi_{ab},\xi_2,\xi_3)
		&=:A'_1
		\\&+
		\xi_{a2}\left[\nu(\xi_b,\xi_{2a},\xi_3)-\nu(\xi_{ab},\xi_2,\xi_3)\right]\Omega^{-1}(\xi_{ab},\xi_2,\xi_3)
		&=:A'_2
		\\&+
		\xi_{a2}\nu(\xi_b,\xi_{2a},\xi_3)\left[\Omega^{-1}(\xi_b,\xi_{2a},\xi_3)-\Omega^{-1}(\xi_{ab},\xi_2,\xi_3)\right]
		&=:A'_3.
	\end{align*}
	The modulus of each $A_i'$ can be bounded by $2^{-\alpha k_2}2^{\max\{k_3,k_b\}}$. Hence, together with Lemma \ref{lem_i4}, we conclude
	\begin{align*}
		\abs{H}
		&\lesssim
		T^d
		\sum_{k_2,k_a,k_b}
		2^{k_1(2s-\alpha)}2^{\max\{k_a,k_3\}}2^{k_a/2}2^{k_3/2}
		\norm{v_{k_a}}{\spfb^{k_a}_T}
		\norm{w_{k_b}}{\spfb^{k_b}_T}
		\norm{v_{k_2}}{\spfb^{k_2}_T}
		\norm{u_{2,k_3}}{\spfb^{k_3}_T}.	
	\end{align*}
	After summation of all obtained bounds in $k_1$ and $k_3$, the proof is finished.
\end{proof}

Observe that estimate \eqref{est_est} follows from \eqref{eq_split_x_y}, \eqref{eq_X_bound}, \eqref{eq_Y_bound} and Lemmata \ref{lem_hs_ll}, \ref{lem_hs_hh}, \ref{lem_hs_lh} and \ref{lem_hs_hl}.

\section{Proof of the main theorem}\label{s_proof}

In this section we will use the results of the previous sections in order to prove Theorem \ref{thm_main}. The proof is divided into four steps:
First, we recall a result guaranteeing the existence and uniqueness of smooth solutions of \eqref{eq_PDE} as well as the continuity of the data-to-solution-map. Second, we collect the estimates obtained in the previous sections. This leads to an a priori estimate in $\lebH^s(\bbt)$ for smooth solutions of \eqref{eq_PDE} with mean zero as well as to two estimates for the difference of smooth solutions with mean zero -- one estimate in $\lebH^{-1/2}(\bbt)$ and another in $\lebH^s(\bbt)$.
In the third step, we approximate an initial datum $u_0\in\lebH^s_0(\bbt)$ by smooth initial data $(u_{k,0})_{k\in\bbn}$ and show that the sequence of smooth solutions $(S^\infty_T(u_{k,0}))_{k\in\bbn}$ converges in $C([0,T];\lebH^s(\bbt))$. Denoting the limit by $S^s_T(u_0)$, this yields a continuous extension of $S^\infty_T$ to $\lebH^s_0(\bbt)$.
In the last step, we upgrade this to an extension of $S^\infty_T$ to $\lebH^s(\bbt)$ by using the conservation of the mean along the flow of equation \eqref{eq_PDE}.\\

\textbf{Step 1.}
The following statement is a direct consequence of Theorem $6$ and $7$ proved in \cite{Kat1975}:
\begin{pro}
	Let $R>0$ and $\sigma>3/2$. Then, there exists a positive time $T=T(R)>0$ such that for all ${u_0\in B_R(0)\subset\lebH^s(\bbt)}$ there is a unique solution $S^\sigma_T(u_0)$ of \eqref{eq_PDE} such that
	\begin{align*}
		S^\sigma_T(u_0)\in C([0,T];\lebH^\sigma(\bbt))
	\end{align*} 
	holds. Moreover, the map $S^\sigma_T:B_R(0)\rightarrow C([0,T];\lebH^\sigma(\bbt))$ is continuous.
\end{pro}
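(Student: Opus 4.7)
The plan is to recast equation \eqref{eq_PDE} as a quasilinear abstract Cauchy problem and invoke Kato's theory from \cite{Kat1975}. Using $\dx(u^2) = 2u\dx u$, the equation becomes
\begin{equation*}
    \dt u + A(u)u = 0, \qquad A(u) := \dx D_x^\alpha - 2u\dx,
\end{equation*}
which fits Kato's framework with $X = \lebL^2(\bbt)$, $Y = \lebH^\sigma(\bbt)$, and intertwiner $S := (1-\dx^2)^{\sigma/2}$ mapping $Y$ isometrically onto $X$. The task reduces to checking Kato's structural hypotheses for this choice of $(A,S,X,Y)$.

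The first step is to verify that $A(u)$ is quasi-$m$-accretive on $X$ with constant locally uniform in $u \in Y$. The dispersive part $\dx D_x^\alpha$ is skew-adjoint on $\lebL^2$ since its symbol $i\omega(\xi)$ is purely imaginary, hence $m$-accretive up to a shift. For the transport part, an integration by parts gives
\begin{equation*}
    \mathrm{Re}\,\langle -2u\dx v,\, v\rangle_{\lebL^2}
    =
    \langle (\dx u)\,v,\, v\rangle_{\lebL^2}
    \geq
    -\norm{\dx u}{\lebL^\infty}\norm{v}{\lebL^2}^2,
\end{equation*}
and Sobolev embedding $\lebH^{\sigma-1}(\bbt)\hookrightarrow\lebL^\infty(\bbt)$ for $\sigma>3/2$ controls this term by $\norm{u}{\lebH^\sigma}$. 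Accretivity of $A(u)+\lambda$ for $\lambda=\lambda(\norm{u}{\lebH^\sigma})$ follows, and $m$-accretivity is obtained from an explicit resolvent construction using the dispersive part.

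Next I would verify the intertwining condition: $SA(u)S^{-1}-A(u)$ extends to a bounded operator on $X$ whose norm depends continuously on $u\in Y$. Since $S$ is a Fourier multiplier, it commutes with $\dx D_x^\alpha$, so the contribution reduces to the commutator $[S, u\dx]S^{-1}$, essentially $[\langle D\rangle^\sigma, u]\dx\langle D\rangle^{-\sigma}$. The classical Kato--Ponce commutator estimate bounds this on $\lebL^2$ in terms of $\norm{\dx u}{\lebL^\infty}+\norm{u}{\lebH^\sigma}$, which is finite for $\sigma>3/2$. Combined with the elementary Lipschitz bound $\norm{(u_1-u_2)\dx v}{\lebL^2}\lesssim\norm{u_1-u_2}{\lebL^\infty}\norm{\dx v}{\lebL^2}$ for the continuity of $u\mapsto A(u)\in\mathcal{L}(Y,X)$, all hypotheses of Kato's framework hold.

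Kato's Theorem 6 then yields, for each $u_0\in B_R(0)\subset\lebH^\sigma(\bbt)$, a unique solution in $C([0,T];\lebH^\sigma(\bbt))$ on a time interval $T=T(R)>0$ depending only on the ball radius, and Theorem 7 supplies the continuity of the data-to-solution map (the proof of which proceeds via an energy estimate for the difference of two solutions together with a limiting argument using the uniform bounds already established). The main technical input throughout is the commutator estimate, which is precisely what forces the threshold $\sigma>3/2$ and cannot be relaxed within this purely abstract setting; breaking below this threshold is the raison d'\^etre of the dispersive function spaces developed in the remainder of the paper.
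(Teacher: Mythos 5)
Your proposal is correct and takes the same route as the paper, which simply cites Theorems 6 and 7 of Kato \cite{Kat1975} without further elaboration; you are supplying the verification of Kato's hypotheses (quasi-$m$-accretivity via integration by parts and Sobolev embedding, the intertwining condition via the Kato--Ponce commutator estimate, and Lipschitz dependence of $A(u)$) that the paper leaves implicit. The details you fill in are accurate, and the identification of $\sigma>3/2$ as the threshold forced by the commutator bound correctly explains why this classical result is the starting point for the paper's short-time Fourier restriction argument.
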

In particular, we obtain the existence, uniqueness and continuity of the map
\begin{equation}\label{eq_dts}
	S^\infty_{T}:B_R(0)\subset \lebH^\infty(\bbt)\rightarrow C([0,T];\lebH^\infty(\bbt)).
\end{equation}

\textbf{Step 2:} 
Fix $\alpha\in(0,1)$, $T\in(0,1]$ and $r\geq s$. Let $u_0$, $u_{1,0}$ and $u_{2,0}$ be $\lebH^\infty_0(\bbt)$-functions and denote the corresponding solutions of \eqref{eq_PDE} by $u=S^\infty_T(u_0)$, $u_1=S^\infty_T(u_{1,0})$ and $u_2=S^\infty_T(u_{2,0})$. Set $v=u_1-u_2$ and $w=u_1+u_2$. Then, from Proposition \ref{pro_est_spf}, Lemma \ref{lem_est_spn} and Lemma \ref{lem_est_spe}, we obtain the following set of estimates
\begin{equation}\label{eq_btr_hr}
	\begin{cases}
		\norm{u}{\spfc^r_T}
		&\lesssim
		\norm{u}{\spec^r_T}+\norm{\dx(u^2)}{\spnc^r_T},
		\\
		\norm{\dx(u^2)}{\spnc^r_T}
		&\lesssim
		T^d\norm{u}{\spfc^r_T}^2,
		\\
		\norm{u}{\spec^r_T}^2
		&\lesssim
		\norm{u_0}{\lebH^r}^2
		+
		T2^{2n}\norm{u}{\spfc^r_T}^2\norm{u}{\spfc^s_T}
		+
		2^{-nc}\norm{u}{\spfc^r_T}^2\norm{u}{\spfc^s_T}
		+
		T^{d}\norm{u}{\spfc^r_T}^2\norm{u}{\spfc^s_T}^2.
	\end{cases}
\end{equation}
Moreover, we obtain two sets of estimates for differences of solutions. More precisely, we have
\begin{equation}\label{eq_btr_hz}
	\begin{cases}
		\norm{v}{\spfc^{-1/2}_T}
		&\lesssim
		\norm{v}{\spec^{-1/2}_T}+\norm{\dx(vw)}{\spnc^{-1/2}_T},
		\\
		\norm{\dx (vw)}{\spnc^{-1/2}_T}
		&\lesssim
		T^d\norm{v}{\spfc^{-1/2}_T}\norm{w}{\spfc^s_T},
		\\
		\norm{v}{\spec^{-1/2}_T}^2
		&\lesssim
		\norm{v_0}{\lebH^{-1/2}}^2
		+
		(T2^{2n}+2^{-nc})\norm{v}{\spfc^{-1/2}_T}^2\norm{w}{\spfc^s_T}
		\\&+\,
		T^d\norm{v}{\spfc^{-1/2}_T}^2
		\left(
		\norm{w}{\spfc^s_T}^2+\norm{u_1}{\spfc^s_T}^2+\norm{u_2}{\spfc^s_T}^2\right),
	\end{cases}
\end{equation}
as well as
\begin{equation}\label{eq_btr_hs}
	\begin{cases}
		\norm{v}{\spfc^s_T}
		&\lesssim
		\norm{v}{\spec^s_T}+\norm{\dx(vw)}{\spnc^s_T},
		\\
		\norm{\dx (vw)}{\spnc^s_T}
		&\lesssim
		T^d\norm{v}{\spfc^s_T}\norm{w}{\spfc^s_T},
		\\
		\norm{v}{\spec^s_T}^2
		&\lesssim
		\norm{v_0}{\lebH^s}^2
		+
		2^{-nc}\norm{v}{\spfc^{-1/2}_T}\norm{v}{\spfc^s_T}\norm{u_2}{\spfc^{s+2-\alpha}_T}
		\\&
		+\,
		(T2^{2n}+2^{-nc})\left(\norm{v}{\spfc^s_T}^3
		+
		\norm{v}{\spfc^s_T}^2\norm{u_2}{\spfc^s_T}\right)
		\\&+\,
		T^d\big(\norm{v}{\spfc^s_T}^4
		+
		\norm{v}{\spfc^s_T}^3\norm{u_2}{\spfc^s_T}
		+
		\norm{v}{\spfc^s_T}^2\norm{u_2}{\spfc^s_T}\norm{w}{\spfc^s_T}
		+
		\norm{v}{\spfc^s_T}^2\norm{u_2}{\spfc^s_T}^2
		\\&
		+
		\norm{v}{\spfc^{-1/2}_T}\norm{v}{\spfc^s_T}\norm{u_2}{\spfc^s_T}\norm{u_2}{\spfc^{s+2-\alpha}_T}
		+
		\norm{v}{\spfc^{-1/2}_T}\norm{v}{\spfc^s_T}^2\norm{u_2}{\spfc^{s+2-\alpha}_T}\big).
	\end{cases}
\end{equation}
According to Proposition \ref{pro_est_spf}, all quantities above are finite, which allows to bootstrap each set of estimates. Doing so, we obtain a common time $T=T(\norm{u_0}{\lebH^s})>0$, for which we have an a priori estimate in $\lebH^r_0(\bbt)$ given by
\begin{equation}\label{est_apriori_hr}
	\norm{u}{\lebL^\infty_T\lebH^r}
	\lesssim
	\norm{u_0}{\lebH^r}
\end{equation}
as well as a difference estimate in $\lebH^{-1/2}_0(\bbt)$ of the form
\begin{equation}\label{est_diff_low}
	\norm{v}{\lebL^\infty_T\lebH^{-1/2}}
	\lesssim
	\norm{v_0}{\lebH^{-1/2}}
\end{equation}
and a difference estimate in $\lebH^s_0(\bbt)$ given by
\begin{equation}\label{est_diff_hs}
	\norm{v}{\lebL^\infty\lebH^s}
	\lesssim
	\norm{v_0}{\lebH^s}
	+
	\left[\norm{v}{\lebL^\infty\lebH^{-1/2}}
	\norm{v}{\lebL^\infty\lebH^s}
	\norm{u_2}{\lebL^\infty\lebH^{s+2-\alpha}}\right]^{1/2}.
\end{equation}
Now, we specify $u_{1,0}=u_0$ and $u_{2,0}=P_{\leq n}u_0$ for $n>0$. In that case, we can improve \eqref{est_diff_hs}. From the inequality \eqref{est_diff_low}, we obtain
\begin{align*}
	\norm{u_1-u_2}{\lebL^\infty\lebH^{-1/2}}
	&\lesssim
	\norm{u_0-P_{\leq n}u_0}{\lebH^{-1/2}}
	=
	\norm{P_{>n}u_0}{\lebH^{-1/2}}
	\lesssim
	2^{n(-s-1/2)}\norm{P_{>n}u_0}{\lebH^s},
\end{align*}
whereas \eqref{est_apriori_hr} yields
\begin{align*}
	\norm{u_2}{\lebL^\infty\lebH^{s+2-\alpha}}
	&\lesssim
	\norm{P_{\leq n}u_0}{\lebH^{s+2-\alpha}}
	\lesssim
	2^{n(2-\alpha)}\norm{P_{\leq n}u_0}{\lebH^s}.
\end{align*}
Combining these estimates with \eqref{est_diff_hs}, we arrive at
\begin{align*}
	\norm{u_1-u_2}{\lebL^\infty\lebH^s}
	\lesssim
	\norm{P_{>n}u_{0}}{\lebH^s}
	+
	\left[2^{n(3/2-\alpha-s)}\norm{P_{> n}u_0}{\lebH^s}
	\norm{u_1-u_2}{\lebL^\infty\lebH^s}
	\norm{P_{\leq n}u_0}{\lebH^s}\right]^{1/2}.
\end{align*}
For $n$ sufficiently large (depending on the implicit constant in the inequality above, the $\lebH^s(\bbt)$-norm of $u_0$ and on $3/2-\alpha-s$), we conclude
\begin{align*}
	\norm{u_1-u_2}{\lebL^\infty\lebH^s}
	\lesssim
	\norm{P_{>n}u_{0}}{\lebH^s}.
\end{align*}
Recalling that we chose $u_1=S^\infty_T(u_0)$ and $u_2=S^\infty_T(P_{\leq n}u_0)$, we arrive at the estimate
\begin{equation}\label{est_diff_solution}
	\norm{S^\infty_T(u_0)-S^\infty_T(P_{\leq n}u_0)}{\lebL^\infty\lebH^s}
	\lesssim
	\norm{P_{> n}u_0}{\lebH^s}
\end{equation}
holding for any $u_0\in\lebH^\infty_0(\bbt)$, every $s>3/2-\alpha$ and all sufficiently large $n$.\\

\textbf{Step 3:} In the third step, we show that $S^\infty_T$ extends continuously to $B_R(0)\subset\lebH^s_0(\bbt)$. Fix $u_0\in\lebH^s_0(\bbt)$ and let $(u_{k,0})_{k\in\bbn}$ be a sequence of smooth functions with mean zero converging to $u_0$ in $\lebH^s(\bbt)$.
For $n,k,l\in\bbn$, we have
\begin{equation}\label{est_Sinf}
	\begin{split}
	\norm{S^\infty_T(u_{k,0})-S^\infty_T(u_{l,0})}{\lebL^\infty\lebH^s}
	&\leq
	\norm{S^\infty_T(u_{k,0})-S^\infty_T(P_{\leq n}u_{k,0})}{\lebL^\infty\lebH^s}
	\\
	&+
	\norm{S^\infty_T(u_{l,0})-S^\infty_T(P_{\leq n}u_{l,0})}{\lebL^\infty\lebH^s}
	\\
	&+
	\norm{S^\infty_T(P_{\leq n}u_{k,0})-S^\infty_T(P_{\leq n}u_{l,0})}{\lebL^\infty\lebH^s}.
	\end{split}
\end{equation}
We will prove that the right-hand side converges to zero as $k$ and $l$ tend to infinity.

Fix $\epsilon>0$. There exists $k_0$ such that for all $k>k_0$ and all $n\in\bbn$ we have
\begin{align*}
	\norm{P_{>n}(u_{k,0}-u_0)}{\lebH^s}
	\leq
	\norm{u_{k,0}-u_0}{\lebH^s}
	\leq
	\epsilon.
\end{align*}
Moreover, we find $n_0$ such that for all $n>n_0$ the inequality $\norm{P_{>n}u_{0}}{\lebH^s}<\epsilon$ holds. Hence, choosing $n>n_0$, we get $\norm{P_{>n}u_{k,0}}{\lebH^s}<2\epsilon$. Applying \eqref{est_diff_solution}, we conclude that for all $k>k_0$ and all $n>n_0$ we have
\begin{equation}\label{est_Sinfty_k}
	\norm{S^\infty_T(u_{k,0})-S^\infty_T(P_{\leq n}u_{k,0})}{\lebL^\infty\lebH^s}
	\lesssim
	2\epsilon.
\end{equation}
Due to
\begin{align*}
	\norm{P_{\leq n}u_{k,0}-P_{\leq n}u_0}{\lebL^\infty\lebH^r}
	\leq
	\begin{cases}
		\norm{P_{\leq n}(u_{k,0}-u_0)}{\lebL^\infty\lebH^s}
		&\text{ if }
		r\leq s,
		\\
		2^{n(r-s)}\norm{P_{\leq n}(u_{k,0}-u_0)}{\lebL^\infty\lebH^s}
		&\text{ if }
		r>s,
	\end{cases}	
\end{align*}
convergence of $(u_{k,0})_{k\in\bbn}$ to $u_0$ in $\lebH^s(\bbt)$ implies convergence of $(P_{\leq n}u_{k,0})_{k\in\bbn}$ to $P_{\leq n}u_{k,0}$ in $\lebH^r(\bbt)$ for any $r\in\bbr$. In particular, this yields convergence of $(P_{\leq n}u_{k,0})_{k\in\bbn}$ to $P_{\leq n}u_{0}$ in $\lebH^\infty(\bbt)$. Thus, the continuity of the map $S^\infty_T$ implies
\begin{align*}
	\norm{S^\infty_T(P_{\leq n}u_{k,0})-S^\infty_T(P_{\leq n}u_0)}{\lebL^\infty\lebH^s}
	\lesssim
	\epsilon
\end{align*}
for sufficiently large $k$ leading to
\begin{equation}\label{est_Sinfty_kl}
	\norm{S^\infty_T(P_{\leq n}u_{k,0})-S^\infty_T(P_{\leq n}u_{l,0})}{\lebL^\infty\lebH^s}
	\lesssim
	2\epsilon.
\end{equation}
From \eqref{est_Sinf}, \eqref{est_Sinfty_k} and \eqref{est_Sinfty_kl}, we deduce
\begin{align*}
	\norm{S^\infty_T(u_{k,0})-S^\infty_T(u_{l,0})}{\lebL^\infty\lebH^s}
	\lesssim
	6\epsilon
\end{align*}
for all sufficiently large $k$ and $l$. Thus, $(S^\infty_T(u_{k,0}))_{k\in\bbn}$ is a Cauchy sequence in $C([0,T];\lebH^s(\bbt))$, which implies the existence of a unique limit denoted by $S^s_T(u_0)\in C([0,T];\lebH^s(\bbt))$. Hence, the map $S^s_T$ extends continuously to $\lebH^s_0(\bbt)$.\\

\textbf{Step 4:} We recall that the mean is conserved along the flow of \eqref{eq_PDE}. In particular, if $u_0\in\lebH^\infty(\bbt)$ and $c\in\bbr$, then the solution with initial datum $u_0+c$ can be written as
\begin{align*}
	S^\infty_T(u_0+c)(t,x)
	=
	S^\infty_T(u_0)(t,x+2ct)
	+
	c.
\end{align*}
Fix $u_0\in\lebH^s(\bbt)$ and let $(u_{k,0})_{k\in\bbn}$ be a sequence converging to $u_0$ in $\lebH^s(\bbt)$. Using the previous equation and the continuous extension of $S^\infty_T$ to $\lebH^s_0(\bbt)$ from Step 3, it is easy to conclude that $(S^\infty_T(u_{k,0}))_{k\in\bbn}$ is a Cauchy sequence in $C([0,T];\lebH^s(\bbt))$. Then, denoting the unique limit of $(S^\infty_T(u_{k,0}))_{k\in\bbn}$ by $S^s_T(u_0)$, we conclude that the map $S^\infty_T$ extends continuously to $\lebH^s(\bbt)$.

\subsection{Improved a priori estimate}\label{ss_apr}
As indicated in the introduction, the a priori estimate \eqref{est_apriori_hr} can be improved for $\alpha\in(1/2,1)$ by some minor modifications of the proofs given in the previous sections. We omit the exact details and just point out which estimates need to be replaced.

From now on, let the parameter $\vari$ in the definitions of $\spfc^s_T$, $\spnc^s_T$ and $\spec^s_T$ be slightly larger than $2-\alpha$. We will show a linear, a nonlinear and an energy estimate as in \eqref{eq_btr_hr}.
The linear estimate \eqref{eq_est_spf} holds for all $r\in\bbr$ and any $\vari>0$, see Proposition \ref{pro_est_spf}.
Next, we claim that the nonlinear estimate holds for all $r>0$. Indeed, if we repeat the proof of Lemma \ref{lem_est_spn}, the case $k_1\sim k_2\gg k_3$ can be improved using $\vari>2-\alpha$. For all remaining cases, we replace each application of \eqref{case_tri_generic} by an application of
\begin{align*}
	\overset{3}{\underset{i=1}{\bigast}} f_i(0,0)
	&\lesssim
	(1+2^{l_3^*-\alpha k_1^*})^{1/2}2^{l_1^*/2}
	\prod_{j=1}^3\norm{f_j}{\lebL^2_\tau\lebl^2_\xi}.
\end{align*}
This estimate holds under the assumptions of Lemma \ref{lem_con}, see Lemma 3.3 in \cite{Sch2020} for a proof.

It remains to improve the energy estimate. Similar to above, we obtain
\begin{align*}
	\overset{4}{\underset{i=1}{\bigast}} f_i(0,0)
	&\lesssim
	(1+2^{l_3^*-\alpha k_1^*})^{1/2}2^{(l_1^*+l_2^*)/2}2^{k_4^*/2}
	\prod_{j=1}^4\norm{f_j}{\lebL^2_\tau\lebl^2_\xi}.
\end{align*}
With this estimate, we can repeat the proof of Lemma \ref{lem_i4} leading to
\begin{align*}
	\abs{S_{\varphi}(u_1,u_2,u_3,u_4)(T)
	}
	\lesssim
	T^d\sum_{k_1,k_2,k_3,k_4}\sup_{\substack{\xi_i\in\supp\chi_{k_i}\\i\in[4]}}\abs{\varphi(\xi_1,\xi_2,\xi_3,\xi_4)}2^{k_1^*(1-\alpha)+}2^{k_4^*/2}
	\prod_{j=1}^4\norm{P_{k_j}u_j}{\spfb^{k_j}_T}.
\end{align*}
In the range $\alpha\in(1/2,1)$, we can repeat the proofs given in Section \ref{ss_for_solution} with the estimate above. As a consequence, we obtain the energy estimate for all $r>\max\{2(1-\alpha),5/4-\alpha\}$.

Bootstrapping these three estimates, we get an improved a priori estimate for smooth solutions $u$ of \eqref{eq_PDE} with mean zero, i.e. \eqref{est_apriori_hr} holds for all $r>\max\{3/2-\alpha,2(1-\alpha),5/4-\alpha\}$.

\section*{Acknowledgements}
I want to thank Sebastian Herr and Robert Schippa for helpful discussions. Funded by the Deutsche Forschungsgemeinschaft (DFG, German Research
Foundation) – IRTG 2235 – Project number 282638148.

\end{document}